\documentclass[a4paper,11pt, reqno]{amsart}
\usepackage[utf8]{inputenc}
\usepackage{frenchineq}
\usepackage{amsmath}  
\usepackage{amssymb}     
\usepackage{amsthm}  
\usepackage{bbm}   
\usepackage{setspace}
  
\usepackage{bm}
\usepackage{hyperref}
\usepackage{accents}
\usepackage{mathrsfs}
\usepackage{mathtools} 
\usepackage{fixmath}
\usepackage{fullpage}
\usepackage{paralist}
\usepackage{cleveref}
\usepackage[ruled, vlined, norelsize, linesnumbered]{algorithm2e}
\SetCommentSty{normalfont}

\usepackage{pgf,tikz}
\usetikzlibrary{arrows}
\usetikzlibrary{patterns}
\usetikzlibrary{positioning}
\usetikzlibrary{shapes.geometric}
\usetikzlibrary{fit}
\usetikzlibrary{calc}

\usepackage{environ}
\NewEnviron{Ralign}{\tagsright@true\begin{align}\BODY\end{align}}

\newcommand{\fitbox}[3]{
\path (#1 tl) ++ (-0.1,0.3) coordinate (#1 tl2);
\path (#1 br) ++ (0.1,-0.15) coordinate (#1 br2);
\node[draw=none,fit={(#1 tl2) (#1 br2)},inner sep=0cm] (#1) {};
\node[fill=none,anchor=south east,font=\tiny,text opacity=1,inner sep=0.08cm] (#1 label) at ({#1}.north east) {#3}; 
\filldraw[rounded corners,draw=#2,fill=#2,fill opacity=0.1] ({#1}.south west) -- ({#1}.south east) -- ({#1 label}.north east) -- ({#1 label}.north west) -- ({#1 label}.south west) -- ({#1}.north west) -- cycle;
\draw[draw=#2,dashed]  ({#1 label}.south east) -- ({#1 label}.south west);
\coordinate (#1 left label) at ({#1}.180); 
\coordinate (#1 right label) at ({#1}.0); 
}

\newtheorem{theorem}{Theorem}
\newtheorem{proposition}[theorem]{Proposition}
\newtheorem{prop_def}[theorem]{Proposition-Definition}
\newtheorem{corollary}[theorem]{Corollary}
\newtheorem{lemma}[theorem]{Lemma}

\theoremstyle{definition}

\newtheorem{assumption}{Assumption}

\theoremstyle{remark}
\newtheorem{remark}[theorem]{Remark}
\newtheorem{example}[theorem]{Example}


\DeclareSymbolFont{stmry}{U}{stmry}{m}{n}
\DeclareMathSymbol\mapsfromchar\mathrel{stmry}{"5B}

\newcommand{\ie}{\textit{i.e.},} 
\newcommand{\eg}{\textit{e.g.},} 
\newcommand{\etc}{\textit{etc}}
\newcommand{\arxiv}[1]{arXiv:#1}

\newcommand{\mathref}[1]{\text{\ref{#1}}}

\DeclareMathOperator*{\argmax}{\arg\max}

\DeclareMathOperator*{\sign}{sign}
\DeclareMathOperator*{\tsign}{tsign}
\DeclareMathOperator*{\tdet}{tdet}
\DeclareMathOperator*{\tper}{tper}

\newcommand{\bal}{\mathrel{\nabla}}

\DeclareMathOperator*{\tconv}{tconv}
\DeclareMathOperator*{\tcone}{tpos}
\DeclareMathOperator*{\tpos}{\tcone}

\DeclareMathOperator*{\conv}{conv}
\DeclareMathOperator*{\cone}{pos}

\DeclareMathOperator*{\Sym}{Sym}

\DeclareMathAlphabet\mathbfcal{OMS}{cmsy}{b}{n}

\DeclareMathAlphabet{\mathbbold}{U}{bbold}{m}{n}

\newcommand{\R}[0]{\mathbb{R}}

\newcommand{\trop}[0]{\mathbb{T}}    
\newcommand{\strop}[0]{\trop_{\!\pm}}    
\newcommand{\symtrop}[0]{\mathbb{S}}    
\newcommand{\tropProj}[0]{\trop \mathbb{P}}

\newcommand{\transpose}[1]{#1^{\top}}

\newcommand{\tplus}{\oplus}          
\newcommand{\tsum}{\bigoplus}        
\newcommand{\ttimes}{\odot}       
\newcommand{\tdot}{\odot} 	
\newcommand{\tscale}{\ttimes}               
\newcommand{\tminus}{\ominus}        

\newcommand{\splus}{\oplus}
\newcommand{\ssum}{\bigoplus}
\newcommand{\stimes}{\odot}
\newcommand{\sdot}{\odot}

\newcommand{\zero}{\mathbbold{0}}    
\newcommand{\unit}{\mathbbold{1}}    
\newcommand{\tropP}{\mathcal{P}}  
\newcommand{\tropC}{\mathcal{C}}  
\newcommand{\tropD}{\mathcal{D}}  
\newcommand{\tropE}{\mathcal{E}}  
  
\newcommand{\tropH}{\mathcal{H}}

\newcommand{\puiseux}[0]{\mathbb{K}} 

\DeclareMathOperator*{\val}{val}
\DeclareMathOperator*{\lc}{lc}
\DeclareMathOperator*{\sval}{sval}

\newcommand{\puiseuxP}[0]{\bm{\mathcal{P}}}   
\newcommand{\puiseuxH}[0]{\bm{\mathcal{H}}}             
\newcommand{\puiseuxC}[0]{\bm{\mathcal{C}}}   
   
\newcommand{\puiseuxE}[0]{\bm{\mathcal{E}}}   
\newcommand{\x}[0]{\bm{x}}
\newcommand{\y}[0]{\bm{y}}
\newcommand{\s}[0]{\bm{s}}

\newcommand{\cc}[0]{\bm{c}}
\newcommand{\A}[0]{\bm{A}}
\renewcommand{\P}[0]{\bm{P}}
\newcommand{\RR}[0]{\bm{R}}

\renewcommand{\b}[0]{\bm{b}}
\renewcommand{\a}[0]{\bm{a} }

\newcommand{\plambda}[0]{\bm{\lambda} }
\newcommand{\pmu}[0]{\bm{\mu} }
\renewcommand{\d}[0]{\bm{d}}
\newcommand{\M}[0]{\bm{M}}
\renewcommand{\H}[0]{\bm{H}}

\newcommand{\ep}{w}  
\newcommand{\epm}{W} 
\newcommand{\epr}{W} 

\newcommand{\pr}{A} 

\newcommand{\Mrow}{M} 

\newcommand{\pep}{\bm{w}} 
\newcommand{\pepm}{\bm{W}}
\newcommand{\pepr}{\bm{W}}

\newcommand{\ppr}{\A}

\newcommand{\graph}[0]{\mathcal{G}}
\newcommand{\digraph}[0]{\vec{\mathcal{G}}}

\newcommand{\breakHyp}[0]{\mathsf{Br}}
\newcommand{\enteringHyp}[0]{\mathsf{Ent}}

\newcommand{\Jdiff}[0]{\Delta}

\newcommand{\basis}[0]{I}

\newcommand{\ileaving}[0]{i_{\mathsf{out}}}
\newcommand{\ient}[0]{i_{\mathsf{ent}}}

\newcommand{\ibreak}[0]{k}

\newcommand{\aleaving}[0]{a_{\mathsf{old}}}

\newcommand{\leaving}[0]{\mathsf{old}}
\newcommand{\ent}[0]{\mathsf{new}}

\DeclarePairedDelimiter{\oiv}{]}{[}

\title{Tropicalizing the Simplex Algorithm}

\author{Xavier {A}llamigeon}
\address[X.~Allamigeon, P.~Benchimol and S.~Gaubert]{INRIA and CMAP, \'Ecole Polytechnique, 91128 Palaiseau Cedex France}
\email[X.~Allamigeon]{xavier.allamigeon@inria.fr}
\author{Pascal Benchimol}
\email[P.~Benchimol]{pascal.benchimol@polytechnique.edu}
\author{St{\'e}phane {G}aubert}
\email[S.~Gaubert]{stephane.gaubert@inria.fr}
\author{Michael Joswig}
\address[M.~Joswig]{Institut f{\"u}r Mathematik,
TU Berlin\\
Str.\ des 17. Juni 136\\
10623 Berlin, Germany}
\email{joswig@math.tu-berlin.de}

\thanks{P.~Benchimol is supported by a PhD fellowship of DGA and \'Ecole Polytechnique. X.~Allamigeon and S.~Gaubert are
  partially supported by the PGMO program of EDF and Fondation Math\'ematique Jacques Hadamard.  M.~Joswig is supported
  by Einstein Foundation Berlin and the German Research Foundation (DFG)}
\keywords{tropical geometry, linear programming, simplex method}
\subjclass[2010]{14T05, 90C05}
\begin{document}
\begin{abstract}
  We develop a tropical analog of the simplex algorithm for linear programming.  In particular, we obtain a combinatorial algorithm to perform one tropical
  pivoting step,
 including the computation of reduced costs, 
in $O(n(m+n))$ time, where $m$ is the number of constraints and $n$ is the dimension.
\end{abstract}

\maketitle 

\section{Introduction}

\noindent
The tropical semiring $(\trop, \tplus, \ttimes)$ is the set $\trop=\R \cup \{ - \infty \}$ endowed with the two operations $a \tplus b = \max(a,b)$
and $a \ttimes b = a + b$.  We are interested in the tropical equivalent of linear programming. In other words, our goal is to give an algorithm for
minimizing a tropical linear form $\max(c_1 + x_1, \dots, c_n + x_n)$ over a tropical polyhedron. The latter is the set of solutions $x\in \trop^n$ of
finitely many inequalities of the form
\[
\max(a_1+x_1, \dots,a_n+x_n, a_{n+1}) \geq \max(b_1+x_1, \dots,b_n+x_n, b_{n+1}) \enspace.
\]
All the coefficients $a_j,b_j,c_j$ are elements of $\trop$. An example is depicted in Figure~\ref{fig:intro_example} below.

Several avenues lead to this research.  First, the classical simplex method belongs to the most relevant algorithms, both for its applicability as
well as its theoretical implications.  So it is natural to explore variants and derivations, including tropical ones.  In the form that we are
studying this leads to a class of minmax problems which are also interesting from a purely complexity-theoretic point of view.  In \cite{AGG} it is
shown that a tropical analog of the feasibility problem in linear optimization is polynomial-time equivalent to deciding which player has a winning
strategy in a mean-payoff game.  The latter decision problem is among the few problems in NP as well as co-NP, see Zwick and Paterson~\cite{zwick},
for which no polynomial-time algorithm is known.  This game-theoretic perspective leads to a second approach to tropical linear programming.  A third
train of thought is more geometrical.  Viro suggested to investigate the tropical aspects of real algebraic geometry already in~\cite{Viro2000}.
Nonetheless, the main focus of tropical geometry so far concerns the tropicalization of algebraic varieties which are defined over the complex numbers
(or Puiseux series with complex coefficients). More recently, however, the tropicalization of real semi-algebraic sets has been studied by
Alessandrini \cite{Alessandrini13}.  In this vein our work seeks to contribute to understanding the tropicalizations of the most simple semi-algebraic
sets: convex polyhedra.  A related motivation arises from linear programming over ordered fields, the complexity of which is a well known open
question~\cite[Section~2]{megiddo}. Ordered fields arise naturally when dealing with perturbations of classical linear
programs~\cite{Jeroslow1973,FilarAltmanAvrachenkov2002}.

Tropical polyhedra or tropically convex sets have appeared in different guises in the works of several authors,
including~\cite{zimmermann77,CG,litvinov00,cgq02,BriecHorvath04}; the present work is specially motivated by the approach of Develin and
Sturmfels~\cite{develin2004}, in which tropical polyhedra are studied by combinatorial means, and by the further work of Develin and
Yu~\cite{DevelinYu07}, who showed that tropical polyhedra are precisely the images by the valuation of (convex) polyhedra over the field of Puiseux
series.

This in mind, the most natural approach for tropical linear programming probably is to do linear programming over real Puiseux series and to
tropicalize, \ie\ to devise a method which traces the valuation of the path followed by the simplex algorithm over real Puiseux series.  This is
exactly what we do here.  What makes our algorithm interesting is that the method itself does not manipulate Puiseux series (explicit lifts to real
Puiseux series are not needed).  Instead it directly processes the tropical input and ``stays tropical'' throughout the computations. In this way, the
arithmetical operations remain elementary.

\begin{figure}[t]\centering
  
  \begin{tikzpicture} [ line cap=round,line join=round,>=triangle 45,x=1.0cm,y=1.0cm ]
    \begin{scope}
      \clip(0,0) rectangle (10,8);

      \draw[color = orange] (5,3)-- (-1,3); \draw[color = orange]
      (5,3)-- (5,-2); \filldraw[draw=none,pattern=north east
      lines,pattern color=orange,fill opacity=0.5] (-1,3) -- (5,3) --
      (5,-1) -- (4.5, -1) -- (4.5, 2.5) -- (-1,2.5) -- cycle;

      \draw[color={rgb:red,10;green,50;blue,10}] (-1,2)-- (7,2);
      \draw[color={rgb:red,10;green,50;blue,10}] (7,2)-- (7,-1);
      \filldraw[draw=none,pattern=north west lines,pattern
      color={rgb:red,10;green,50;blue,10},fill opacity=0.5] (-1,2) --
      (7,2) -- (7,-1) -- (6.5, -1) -- (6.5, 1.5) -- (-1,1.5) -- cycle;

      \draw[color={rgb:red,50;green,0;blue,50}] (7,5)-- (7,-2);
      \draw[color={rgb:red,50;green,0;blue,50}] (7,5)-- (12,10);
      \filldraw[draw=none,pattern=north west lines,pattern
      color={rgb:red,50;green,0;blue,50},fill opacity=0.5] (7,-2) --
      (7,5) -- (12,10) -- (12, 9.5) -- (7.5, 5) -- (7.5,-2) -- cycle;

      \draw[color={rgb:red,50;green,50;blue,0}] (2,6)-- (2,-2);
      \draw[color={rgb:red,50;green,50;blue,0}] (2,6)-- (6,10);
      \filldraw[draw=none,pattern=north west lines,pattern
      color={rgb:red,50;green,50;blue,0},fill opacity=0.5] (2,-2) --
      (2,6) -- (6,10) -- (5.5, 10) -- (1.5, 6) -- (1.5,-2) -- cycle;

      \fill[fill=lightgray, fill opacity = 0.7] (2,3) -- (5,3) --
      (5,2) -- (7,2) -- (7,5) -- (12,10) -- (6,10) -- (2,6) -- cycle;

      \draw[ultra thick] (7,2) -- (7, -1);

      \draw[dashdotted, thick] (0,0) -- (10,10); \draw[color = blue,
      thick] (-1,6) -- (6,6) -- (6,-1); \draw[color = blue, thick]
      (-1,4) -- (4,4) -- (4,-1); \draw[color = blue, thick] (-1,3) --
      (2,3); \draw[color = blue, thick] (3,3) -- (3,-1); \draw[color =
      red, ultra thick] (2,3) -- (3,3);

    \end{scope}

 \colorlet{mygray}{black!80!}
       \draw[->, mygray] (0,0) -- (10,0);
       \draw[->, mygray] (0,0) -- (0,8);

       \node[ anchor = north, mygray] at (10,0) {${\scriptstyle x_1}$};
       \node[ anchor = east, mygray] at (0,8) {${\scriptstyle x_2}$};
     \foreach \x in {0,1,...,9} { \node [anchor=north, mygray] at (\x,0) {${\scriptstyle\x}$}; }
     \foreach \y in {0,1,...,7} { \node [anchor=east, mygray] at  (0,\y) {${\scriptstyle\y}$}; }
     \foreach \x in {0,1,...,9} { \draw[mygray] (\x,0.1) -- (\x,-0.1); }
     \foreach \y in {0,1,...,7} { \draw[mygray] (0.1,\y) -- (-0.1,\y); }

  \end{tikzpicture}
  
  \caption{A tropical linear program. The feasible set is the union of the gray shaded area with the thick black halfline. Three level sets for the objective function $\max(x_1, x_2)$ are depicted in blue. The thick red segment is the set of optima.}
\label{fig:intro_example}
\end{figure}
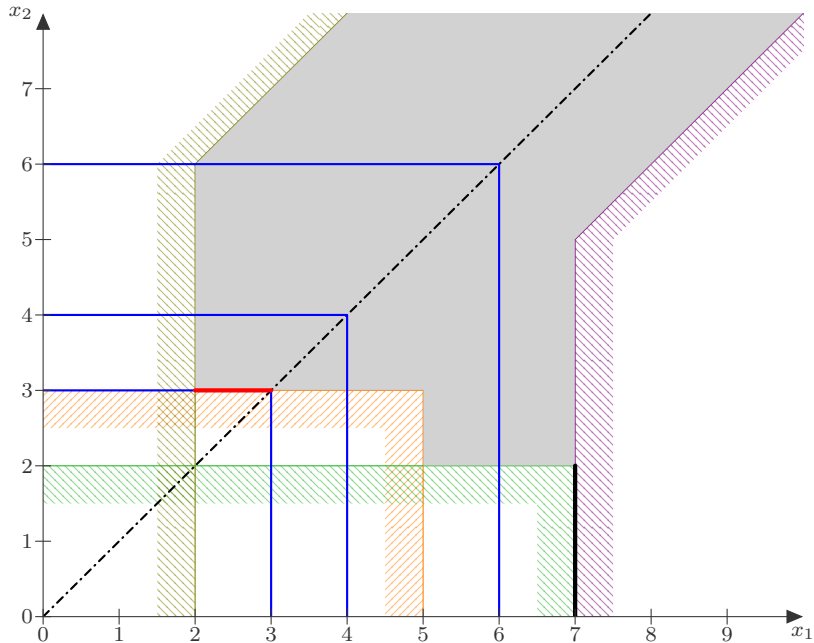

In order to make our ideas more apparent, and to avoid technical details which are too cumbersome to attack in a direct fashion, in the present paper
we assume that our tropical linear program is primally and dually non-degenerate.  Further, we assume that each point in the feasible region has
finite coordinates only.  Any tropical linear program satisfying these properties will be called \emph{standard}; see Assumptions
\ref{ass:general_position}, \ref{ass:finite_coordinates} and~\ref{ass:dual_general_position} below.  We defer all ramifications which come from
looking at degenerate input or infinite coefficients to a subsequent second paper.  Our main result is the following theorem.

\begin{theorem}\label{th-main}
  Consider a standard tropical linear program with $n$ variables and $m$ inequalities.  Then, the tropical simplex algorithm
  (Algorithm~\ref{alg:main}) terminates and returns an optimal solution for any tropical pivoting rule.  Every iteration (pivoting and computing
  reduced costs) can be done in time $O(n(m+n))$.  Moreover, the algorithm traces the image by the valuation map of the path followed by the classical
  simplex algorithm applied to any lift of this program to the field of real Puiseux series, with a compatible pivoting rule.
\end{theorem}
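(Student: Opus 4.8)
The plan is to reduce correctness of the tropical simplex algorithm to correctness of the classical simplex algorithm over the field of real Puiseux series $\puiseux$, and to establish the complexity bound directly from the combinatorial description of a tropical pivot.

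\textbf{Lifting and transfer.} First I would fix a lift of the tropical program to a linear program over $\puiseux$: by the Develin--Yu theorem~\cite{DevelinYu07} the feasible set of the tropical program is the image under the valuation map $\val$ of the feasible set of a linear program over $\puiseux$ whose coefficients have the prescribed valuations, and the leading coefficients may be chosen generically. The first step, which I expect to be the hardest, is a \emph{transfer lemma}: the standardness Assumptions~\ref{ass:general_position}, \ref{ass:finite_coordinates} and~\ref{ass:dual_general_position} ensure that such a lift can be taken primally and dually non-degenerate, and, crucially, that every comparison made by the classical simplex algorithm on that lift (feasibility of a basic point, the minimum-ratio test selecting the leaving variable, the sign test of a reduced cost, the selection of an entering variable) is \emph{decided at the tropical level}: the sign of each Puiseux series arising in these tests is determined by its valuation together with the tropical sign data carried by the algorithm, never by higher-order terms of its expansion. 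Concretely this reduces to showing that the Cramer-type determinants governing basic points and reduced costs are tropically non-singular, so that $\val$ of such a determinant equals the corresponding tropical determinant, and $\val$ of a sum equals the minimum of the valuations whenever that minimum is attained exactly once; the non-degeneracy hypotheses are precisely what excludes the offending ties.

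\textbf{Simulation, termination, optimality.} With the transfer lemma in hand I would prove a \emph{simulation lemma}: one iteration of Algorithm~\ref{alg:main} applied to the tropical data produces exactly the image under $\val$ (with signs) of the iterate produced by one classical pivot on the lift, provided the classical pivoting rule is compatible with the tropical one in the sense that it selects the entering variable consistently with the tropical rule. The tropical basic point and tropical reduced costs computed by Algorithm~\ref{alg:main} are the tropicalizations of the Cramer formulas for their classical counterparts, and by the transfer lemma the combinatorial choices coincide on both sides. An induction on the number of iterations then shows that the tropical algorithm traces the image under $\val$ of the entire classical path. Termination follows because the lifted program is non-degenerate, so the classical simplex terminates for every pivoting rule (no basis is repeated); and the returned point is optimal because the valuation of a classical optimum is a tropical optimum, which I would deduce from monotonicity of $\val$ together with the absence of cancellation in the lifted objective, again a consequence of standardness.

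\textbf{Complexity.} Finally, for the $O(n(m+n))$ bound I would argue purely tropically. A pivot requires updating the tropical basic point, performing the tropical minimum-ratio test to find the leaving variable, and evaluating the $n$ tropical reduced costs to find an entering variable; each of these quantities is a tropical determinant of a matrix differing from the current basis matrix by a single row or column exchange, whose optimal term is unique under non-degeneracy. I would describe a combinatorial routine that propagates the relevant optimal assignments through the graph attached to the current basic point, reusing the data structures maintained from the previous iteration, and show that it evaluates all the quantities needed in one iteration using $O(n(m+n))$ elementary operations. As already indicated, the principal obstacle is the transfer lemma: proving that tropical non-degeneracy of the input genuinely forces every comparison of the classical algorithm to be resolved by valuations and tropical signs alone, so that ``staying tropical'' loses no information; once this is secured, the simulation and the complexity analysis are comparatively routine.
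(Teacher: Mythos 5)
Your overall architecture is the same as the paper's (lift to Puiseux series, show that every decision of the classical simplex is determined by valuations and tropical signs, simulate the path, conclude termination and optimality, and argue the complexity combinatorially), but the two steps you defer to lemmas are exactly where the paper's real work lies, and your proposed reductions would not suffice as stated. For the ``transfer lemma'': the Develin--Yu result \cite{DevelinYu07} lifts the \emph{internal} representation (generators), whereas the clause ``traces the path for \emph{any} lift'' forces you to control arbitrary lifts of the \emph{inequality} representation, and without sign-genericity such lifts can have strictly smaller valuation image (see the example around \eqref{eq:trop_poly_val_fail}). The paper's substitute is Theorem~\ref{thm:val_and_inter_commute_for_generic_matrices} and Corollary~\ref{coro:val_and_inter_commute_for_generic_matrices}, and these are not proved by observing that Cramer-type determinants are tropically non-singular: one must \emph{construct}, for every tropical point and every lift of the constraint matrix, admissible leading coefficients, which is done via a Farkas-lemma argument whose infeasibility would create a directed cycle in the tangent digraph, contradicting genericity (Lemma~\ref{lemma:directed_cycle_sign_singularity}), followed by an induction on sign patterns to handle mixed intersections of s-hyperplanes and half-spaces. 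Non-singularity of the determinants does govern the reduced-cost signs (Theorem~\ref{thm:tropical_cramer} and Proposition~\ref{prop-pivotingimprove}), but by itself it does not show that a tropical basic point or edge is the valuation of a feasible Puiseux basic point or edge for every lift (Propositions~\ref{prop:def_basic_points} and~\ref{prop:def_edges}), which is what the simulation needs. Your termination and optimality outline is fine and matches the paper, granting this lemma: positive edge length (hence strict improvement) follows from the node/edge count of Proposition~\ref{prop:tangent_graph_interior_edge}, and optimality from Propositions~\ref{prop-pivotingimprove} and~\ref{prop:puiseux_solves_tropical_program}.

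The complexity paragraph contains a genuine gap. You treat a pivot as ``the tropical minimum-ratio test'' plus determinant updates, but the classical ratio test does not tropicalize: as the remark closing Section~\ref{sec-prelim} shows, the relevant Puiseux quantities can have equal valuations while the leaving/entering data differ, and a tropical edge is a concatenation of up to $n$ ordinary segments whose entering constraint is only revealed by traversing them. The $O(n(m+n))$ bound rests on the combinatorial characterization of tangent digraphs along the edge (Proposition~\ref{prop:tangent_graph_interior_edge}), their incremental update together with the bookkeeping of the sets $\enteringHyp$, $\breakHyp$ and the scalars $\lambda^{\pm}_i$ (Propositions~\ref{prop:maximal_step}, \ref{prop:tangent_graph_update} and~\ref{prop:lambda_sets_update}), and the telescoping estimate of Theorem~\ref{th:pivot}; for the reduced costs, on solving the tropical Cramer system by longest paths in which both the maximizing permutation and the dual potentials that make Dijkstra applicable (avoiding an $O(n^3)$ Hungarian or Bellman--Ford computation) are read off the tangent graph at the current basic point (Theorem~\ref{thm:reduced_costs}). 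Your ``combinatorial routine propagating optimal assignments through the graph'' is an IOU for precisely this content, so as written the proposal asserts rather than proves the per-iteration bound.
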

In particular, under the assumptions of Theorem~\ref{th-main}, linear programs over Puiseux series are implicitly solved by the tropical simplex
algorithm. By definition, a tropical pivoting rule selects a variable of tropically negative reduced cost. A classical pivoting rule is said to be
compatible with the former tropical pivoting rule if they select the same variables.  Tropical pivot rules are the topic of Section~\ref{sec:pivot}.

Our tropical simplex algorithm relies on several tools of independent interest.  For instance,
Corollary~\ref{coro:val_and_inter_commute_for_generic_matrices} shows that, again under the general position assumption, the cells of an
arrangement of hyperplanes over the field of real Puiseux series are in one to one correspondence with the cells of the arrangement of the
associated tropical hyperplanes.  This leads to the notion of \emph{tropical basic points} and \emph{tropical edges} of a system of tropical
affine inequalities.  Unlike the classical case, a tropical basic point may not be tropically extreme, see
Proposition~\ref{prop:extreme_basic} and Remark~\ref{rk:therearefewerextremepoints} below.  This stems from the lack of a good notion for a
general ``face'' of a tropical polyhedron; see the discussions in \cite{Joswig05,DevelinYu07} This is related to competing notions of
rank~\cite{DevelinSantosSturmfels05,AGG08b}.

\begin{algorithm}[t]\caption{Phase II tropical simplex algorithm\label{alg:main}}
\SetArgSty{text}
\DontPrintSemicolon
\footnotesize
\KwIn {A matrix $A\in\strop^{m\times n}$, a column vector $b\in\strop^m$, an unsigned row vector $c\in\trop^n$.
  A  tropical basic point $x^\basis$ of  $\tropP(A,b)$, and the corresponding set $I \subset [m]$.}
\KwOut {A tropical basic point of $\tropP(A,b)$ that is minimal with respect to $c$.}
compute the tropical reduced costs $y$ associated with $\basis$ \;
\While{$y$ has a tropically negative entry}{
  choose $ \ileaving\in \basis$ such that $y_{\ileaving}$ is tropically negative \;
  $K \leftarrow \basis\setminus\{\ileaving\}$ \;
  pivot along the tropical edge $\tropE_K$ to the tropical basic point $x^{\basis'}$ for a set of the form $\basis' = K \cup \{ \ient\}$ \;
  $\basis\leftarrow \basis'$ \;
  compute the tropical reduced costs $y$ associated with $\basis$ \;
}
\Return $x^{\basis}$ \;
\end{algorithm}

A fundamental discrepancy to the classical simplex algorithm is that a tropical edge in $\trop^n$ may have a more complex geometrical structure as,
indeed, it consists of up to $n$ ordinary segments. These segments can be determined from \emph{tangent digraphs}, which encode a local description of
a tropical polyhedron; tangent digraphs were initially introduced in the form of directed hypergraphs in~\cite{AllamigeonGaubertGoubaultDCG2013}. The
cornerstone of the tropical simplex algorithm is a new combinatorial characterization of the tangent digraph
(Proposition~\ref{prop:tangent_graph_interior_edge}) at a point inside a tropical edge. In particular, this entails an incremental computation of
tangent digraphs from one ordinary segment to another (Proposition~\ref{prop:tangent_graph_update}), leading to an $O(n(m + n))$ time method for one
full pivoting step, see Theorem~\ref{th:pivot}. Finally, we define the tropical reduced cost vector, which allows one to certify the optimality of a
given basic point. We show that the vector of reduced costs can be computed by solving a system of signed tropical linear equations, and that the
running time of this step is also bounded by $O(n(m+n))$, see Theorem~\ref{thm:reduced_costs}.

Let us finally point out some related work. The study of the analogs of linear programs over ordered semirings was undertaken in the
book~\cite{Zimmermann.U}, in particular, a duality theorem for a special class of linear programs can be found there.  The idea of looking for analogs
of convex programming results over ``extremal'' (a variant of tropical) structures is also apparent in~\cite{Zimmermann.K}.  Several recent works have
proposed algorithms to solve various tropical programming problems. In~\cite{BA-08}, a dichotomy algorithm is developed, allowing one to solve
tropical linear programming problems by a reduction to linear feasibility problems.  In~\cite{GKS11}, more general linear-fractional programming
problems are studied, in which one maximizes the difference of two tropical linear forms.  A policy iteration algorithm based on a parametric mean
payoff game is given there.  These policies seem to have interesting connections with basic points.  However, our present approach leads to a
fundamentally different method: we move along edges in the graph of the tropical polytope, whereas policy iteration type algorithms often take ``great
leaps'' in the same graph; also, one iteration of the present algorithm takes only $O(n(m+n))$ time whereas every iteration in~\cite{GKS11} requires
to solve a mean payoff game.  Yet another different class of algorithms for solving tropical linear feasibility problems relies on cyclic
projection~\cite{CGB-03,gauser,AGNS10}.  Recall also that the tropical linear feasibility problem is equivalent to mean payoff games, for which a
number of algorithms are available, like pumping~\cite{GKK-88}, value iteration~\cite{zwick,AGG}, or policy
iteration~\cite{cras,DG-06,bjorklund,chaloupka}. A reduction of mean payoff games to classical linear programs with exponentially large coefficients
is established in~\cite{schewe2009}.  The asymptotic simplex method developed in~\cite{FilarAltmanAvrachenkov2002} solves arbitrary linear programs on
Laurent series (which is sufficient for tropical linear programs with rational coefficients). Each iteration of their method requires $O(s(m+n)^2)$
operations, where $s \leq (m+n)$ is the maximum taken over the valuations of all Puiseux series arising during the computation.  Our tropical simplex
algorithm shows a better complexity per iteration since the factor of $s$ is dispensed with, but the approach of \cite{FilarAltmanAvrachenkov2002}
does not require any genericity assumptions.

This paper is organized as follows.  Section~\ref{sec-prelim} describes our notations and collects the relevant known facts about convex polyhedra
over real Puiseux series.  For the reader's convenience we introduce a running example which we refer to throughout this paper.  In
Section~\ref{sec:tropical_basic_points} we characterize the key players in our algorithms: tropical basic points and tropical edges.  The core of our
paper is Section~\ref{sec:pivot}, where we describe the tropical pivot.  Section~\ref{sec:reduced_costs} discusses tropical reduced costs.  Finally,
Theorem~\ref{th-main} is proved in Section~\ref{sec:generalization}.

Our algorithm for solving tropical linear programs is outlined in Algorithm~\ref{alg:main}.  It directly corresponds to Phase~II of the classical
simplex method over real Puiseux series.  Phase~II starts from a given tropical basic point and proceeds along improving edges towards an optimal
tropical basic point.  The Phase~I problem, to find a first tropical basic point will be addressed in a sequel to this work.  While classically Phase
I can be reduced to Phase II, in general, this requires to solve a degenerate linear program.  As explained above, this is out of the scope of the
present paper. 

\section{Preliminaries}\label{sec-prelim}

\subsection{Tropical arithmetic}
The domain for our computations is the set $\trop=\R \cup \{ - \infty \}$.  The neutral elements for the tropical ``addition'' and ``multiplication''
are $ \zero := - \infty$ and $\unit := 0$, respectively.  The usual definition of matrix operations carries over to tropical matrices. Given two
matrices $A = (a_{ij})$ and $B = (b_{ij})$, we denote by $A \tplus B$ and $A \tdot B$ the matrices with entries $a_{ij} \tplus b_{ij}$ and $\tsum_k
a_{ik} \ttimes b_{kj}$, 
respectively. We also denote by $\transpose{A}$ the transpose of the matrix $A$, by $\pr_i$ the $i$th row of $A$, and by $\pr_I$ the submatrix of $A$
formed from the rows $i \in I$. For the sake of simplicity, we identify vectors of size $n$ with $n{\times}1$-matrices.  Given $a = (a_{1j}) \in \trop^{1
  \times n}$ and $x \in \trop^n$, we denote by $\argmax(a \tdot x)$ the set of indices $i \in [n]=\{1,\dots,n\}$ attaining the maximum in
\[
a \tdot x = \max_{j \in [n]}(a_{1j} + x_j) \ .
\]
The usual total order $\leq$ on $\R$ extends to $\trop$. This induces a partial ordering of tropical vectors by
entry-wise comparisons.  The topology induced by the order makes $(\trop, \tplus, \ttimes)$ a topological semiring.

In the following, we will think of the $n$-fold product space $\trop^n$ as a semimodule over $\trop$, where scalars act tropically on vectors by
$( \lambda, x) \mapsto \lambda \tscale x := (\lambda + x_1, \dots, \lambda + x_n)$ and the tropical vector addition is $(x,y) \mapsto x \tplus y := (\max(x_1, y_1), \dots, \max(x_n, y_n) )$.

\subsubsection{Signed tropical numbers}\label{subsect-signed}
It will be convenient to use the set of \emph{signed tropical numbers}~\cite{akian1990linear}, denoted here by $\strop$. The latter set consists of two copies of $\trop$, called the set of \emph{positive tropical numbers} and the set of \emph{negative
  tropical numbers}, respectively. These two copies are glued by identifying the element $\zero$. Positive and negative tropical numbers are written as $a$ and $\tminus a$,
respectively, for some $a \in \trop$. By definition, the numbers $a$ and $\tminus a$ are different, unless $a = \zero$.
Their  \emph{sign} is $\sign(a)=1$ and $\sign(\tminus a) =-1$ when $a$ is not $\zero$ and $\sign(\zero)=0$. 
The \emph{modulus} of $x \in \{a, \tminus a \}$ is defined as $|x| :=a$. The multiplication  $x \stimes y$ of two elements $x,y \in \strop$ yields the element whose modulus is $|x|+|y|$ and whose sign is the product $\sign(x) \sign(y)$.
 The \emph{positive part} and the \emph{negative part} of an element $x \in \strop$ are the tropical numbers $x^+$ and $x^-$ defined by:
\begin{align*}
x^+ = \left \{
\begin{array}{cr}
|x | & \text{ if } x \text{ is positive} \\
\zero & \text{ otherwise}
\end{array}
\right . 
\hspace{1cm}
x^- = \left \{
\begin{array}{cr}
\zero & \text{ if } x \text{ is positive} \\
|x | & \text{ otherwise}
\end{array}
\right .
\end{align*}
Modulus, positive part and negative part extend to matrices entry-wise.  
It was shown in~\cite{akian1990linear} that signed tropical numbers
can be embedded in a semiring, called the {\em symmetrized tropical
semiring}. Indeed, the sum of two signed tropical numbers 
with opposite signs but identical modulus cannot be defined
as a signed tropical number, one needs to enlarge $\strop$
with a third type of elements, called {\em balanced elements},
to represent such sums. 
We will defer the discussion of the symmetrized
tropical semiring until Section~\ref{sec:symmetrized},
since the additional technicalities can be spared in the first three quarters of this paper. In particular, the addition of signed tropical numbers
will not be used before Section~\ref{sec:symmetrized}.

\subsubsection{General position}

The \emph{permanent} of the square matrix $M = (m_{ij}) \in \trop^{ n \times n}$ is given by
\begin{equation}
\tper(M) := \tsum_{\sigma \in \Sym(n)} m_{1 \sigma(1)} \ttimes \cdots \ttimes m_{n \sigma(n)} = \max_{\sigma \in \Sym(n)} m_{1 \sigma(1)} + \dots + m_{n \sigma(n)} 
\ ,
\label{eq:tper}
\end{equation}
where $\Sym(n)$ is the set of all permutations of $[n]$. Computing the tropical permanent amounts to finding a permutation which attains the maximum in
(\ref{eq:tper}). Such a permutation is a solution of the assignment problem with costs $(m_{ij})$. It can found in time $O(n^3)$ using the Hungarian method; see
\cite[\S17.3]{Schrijver03:CO_A}. A square matrix is said to be \emph{tropically singular} if $\tper(M)= \zero$ or if the maximum is attained at least twice in
(\ref{eq:tper}).

A slightly more restrictive notion of singularity arises when signs are taken into account. A signed matrix $M \in \strop^{n \times n}$ is \emph{tropically sign
  singular} if $\tper(|M|) = \zero$ or if the maximum in $\tper(|M|)$ is attained on two distinct permutations $\sigma$ and $\pi$ such that the terms
$\tsign(\sigma) \stimes m_{1\sigma(1)} \stimes \cdots \stimes m_{n \sigma(n)}$ and $\tsign(\pi) \stimes m_{1\pi(1)} \stimes \cdots \stimes m_{n \pi(n)}$ have
opposite tropical signs, where $\tsign(\sigma) = \unit$ if $\sigma$ is an even permutation and $\tsign(\sigma) = \tminus \unit$ otherwise.  The notion
of tropical sign singularity of a matrix appeared
in different forms in~\cite{gondran84}, \cite{akian1990linear}, and ~\cite[\S4]{Joswig05}.

We call a rectangular matrix $W \in \strop^{m \times n}$ \emph{tropically generic} if for every square submatrix $U$ of $W$ either $\tper(|U|) =
\zero$ or $|U|$ is not tropically singular.  Similarly, the matrix $W$ is \emph{tropically sign generic} if $\tper(|U|) = \zero$ or $U$ is not
tropically sign singular, again for all square submatrices $U$.

\begin{example}
Consider the following matrix with signed tropical entries.
  \begin{equation*} \label{eq:matrix_first_example}
   W = 
    \begin{pmatrix}
      -5 & -3 & \tminus 0 \\
      \tminus(-7) & -5 & 0 \\
      -7 & -2 & \tminus 0 \\
      -2 & \tminus (-6) & \tminus 0 \\
    \end{pmatrix} 
  \end{equation*}
The matrix $W$ is not tropically generic. Indeed, consider its submatrix $W'$ formed from the first  two rows and the first two columns. We have $\tper(|W'|) = ((-5) \ttimes (-5)) \tplus (|\tminus(-7)| \ttimes (-3)) = (-10) \tplus (-10)$, thus $|W'|$ is tropically singular. However, $W'$ is not tropically sign singular, as the terms $\unit \ttimes (-5) \ttimes (-5) = -10$ and $\tminus \unit \ttimes \tminus (-7) \ttimes (-3) =-10$ associated with the maximizing permutations in $\tper(|W'|)$ have the same tropical sign.

Now consider the submatrix $W'' = \Bigl(\begin{smallmatrix} \tminus( -7) &  0 \\ -7 & \tminus 0\end{smallmatrix}\Bigr)$ formed from the second and third  rows and the first and last columns of $W$. We have $\tper (|W''|) = \bigl( |\tminus(-7)| \ttimes | \tminus 0| \bigr) \tplus \bigl((-7) \ttimes  0  \bigr) = (-7) \tplus (-7)$  and the two terms $\unit \ttimes \tminus (-7) \ttimes \tminus 0 =  -7$ and $\tminus \unit \ttimes (-7) \ttimes 0 =  \tminus(-7)$ have opposite tropical signs. Thus $W''$ is not tropically sign singular, and therefore $W$ is not tropically sign generic.
\end{example}

\subsection{Tropically convex sets and tropical polyhedra}

A set $S \subset \trop^n$ is said to be a \emph{tropically convex} if $\lambda \tscale x \tplus \mu \tscale y \in S$ for all $x, y \in S$ and $\lambda, \mu \in \trop$ such that $\lambda \tplus \mu = \unit$. The set $S$ is said to be a \emph{tropical cone} when the same conclusion holds even if the requirement that $\lambda \tplus \mu = \unit$ is omitted.  A tropical cone is \emph{polyhedral} if it is finitely generated. These notions are analogous to the classical ones, since the condition $\lambda, \mu \geq \zero$ is trivially satisfied.
Given $V \subset \trop^n$, we denote by  $\tconv(V)$ the smallest (inclusion-wise) tropically convex subset of $\trop^n$ containing $V$.  Similarly,  $\tpos(V)$ denote the smallest tropical cone of $\trop^n$ containing $V$.

\subsubsection{Tropical half-spaces and s-hyperplanes}
An \emph{(affine) tropical half-space} is a subset of $\trop^n$ of the form:
\begin{equation}
\max (\alpha_1 + x_1 , \dots , \alpha_n + x_n , \alpha_{n+1} ) \geq \max(  \beta_1 + x_1  \dots , \beta_n + x_n,  \beta_{n+1}  ) \ ,
\label{eq:halfpsace_definition}
\end{equation}
where $\alpha,\beta \in \trop^{n+1}$ . When $\alpha_{n+1} = \beta_{n+1} = \zero$, it is said to be a \emph{linear tropical half-space}. 
Throughout this paper, we assume that half-spaces are defined by non-trivial inequalities:
\begin{assumption} \label{assumption_A} There is at least one non-null coefficient in the
  inequality~\eqref{eq:halfpsace_definition}, \ie\
\[ \max \left (\max_{j \in [n+1] } \alpha_j, \max_{j \in [n+1]} \beta_j \right) > \zero \,. \]
\end{assumption}
Without loss of generality  (see~\cite[Lemma~1]{GaubertKatz2011minimal}), we also always assume that half-spaces are induced by an inequality satisfying the following condition:
\begin{assumption} \label{assumption_B} Each variable appears on at most one side of the
  inequality~\eqref{eq:halfpsace_definition}, \ie\, 
\[ \min(\alpha_j , \beta_j) = \zero  \text{ for all }  j \in [n+1]   \,.\]
\end{assumption}
Then, we can concisely describe a tropical half-space with a
signed row vector $a = (a_{1j}) \in \strop^{1 \times n}$ and a signed scalar $b \in \strop$ as:
\begin{align*}
  \tropH^{\geq}(a,b) :&= \{ x \in \trop^n \mid a^+_{11} \ttimes x_1 \tplus \cdots \tplus a^+_{1n} \ttimes x_n \tplus b^+ \geq
  a^-_{11} \ttimes x_1 \tplus \cdots \tplus a^-_{1n} \ttimes x_n \tplus b^- \} \\
&= \{ x \in \trop^n \mid a^+ \tdot x \tplus b^+ \geq a^- \tdot x \tplus b^-\} \ .
\end{align*}

A \emph{signed tropical hyperplane}, or \emph{s-hyperplane}, is defined as the set of the solutions $x \in \trop^n$ of
an equality of the form:
\begin{equation}
 \tropH(a,b) = \{ x \in \trop^n \mid a^+ \tdot x \tplus b^+ = a^- \tdot x \tplus b^-\} \; ,
\label{eq:signed_hyperplane_definition}
\end{equation}
where $a \in \strop^{1 \times n}$ and $b \in \strop$.  
When $\tropH^{\geq}(a,b)$ is a non-empty proper subset of $\trop^n$, its boundary is  $\tropH(a,b)$.

\begin{remark}
  The set $\tropH(a,b)$ is said to be \emph{signed} because it corresponds to the tropicalization of the intersection of a usual hyperplane with the
  non-negative orthant over Puiseux series; see Section \ref{sec:puiseux}. A tropical (unsigned) hyperplane is defined by an unsigned row vector $a = (a_{1j})
  \in \trop^{1 \times n}$ and an unsigned scalar $b \in \trop$ as the set of all points $x \in \trop^n$ such that the maximum is attained at least
  twice in $a \tdot x \tplus b = \max(a_{11} + x_1, \dots, a_{1n}+ x_n, b)$; see \cite{richter2005first}.  This corresponds to the tropicalization of
  an entire ordinary hyperplane.
\end{remark}

\subsubsection{Tropical polyhedra}
A \emph{tropical polyhedron} is the intersection of finitely many tropical affine half-spaces. It will be denoted by a signed matrix $A \in \strop^{m
  \times n}$ and a signed vector $b \in \strop^m$ as:
\begin{equation*}
  \tropP(A,b) := \{x \in \trop^n \mid A^+ \tdot x \tplus b^+ \geq A^- \tdot x \tplus b^- \} =  \bigcap_{i \in [m] } \tropH^{\geq} (\pr_i, b_i) \ .
\end{equation*}
If all those tropical halfspaces are linear, \ie\ if $b$ is identically $\zero$, that intersection is a tropical polyhedral cone.

\begin{example}
  The tropical polyhedron depicted in Figure~\ref{fig:intro_example} is defined by the following matrix and vector.
  \begin{equation*}
A =     \left (
  \begin{array}{ccc}
        -5 & -3  \\
      \tminus(-7) & -5  \\
      -7 & -2  \\
      -2 & \tminus (-6) \\
    \end{array}
    \right ) 
 \text{ and } 
b = \left (
 \begin{array}{c}
        \tminus 0 \\
     0  \\
      \tminus 0 \\
     \tminus 0\\
    \end{array}
\right )
 \;
 \begin{tabular}{c}
    \begin{tikzpicture}[baseline,scale=1] \draw[fill=orange ] (0,0.0)
        rectangle (1,0.1);
      \end{tikzpicture} \\
 \begin{tikzpicture}[baseline,scale=1] \draw[fill={rgb:red,50;green,0;blue,50} ] (0,0.0)
        rectangle (1,0.1);
      \end{tikzpicture} \\
 \begin{tikzpicture}[baseline,scale=1] \draw[fill={rgb:red,10;green,50;blue,10} ] (0,0.0)
        rectangle (1,0.1);
      \end{tikzpicture} \\
 \begin{tikzpicture}[baseline,scale=1] \draw[fill={rgb:red,50;green,50;blue,0} ] (0,0.0)
        rectangle (1,0.1);
      \end{tikzpicture}
 \end{tabular}
  \end{equation*}
The half-space depicted in orange in Figure~\ref{fig:intro_example} is $\tropH^{\geq}(A_1, b_1) = \{ x \in \trop^2 \mid \max(x_1 -5, x_2 -3 ) \geq 0 \}$. Its boundary is the signed hyperplane  $\tropH(A_1, b_1) = \{ x \in \trop^2 \mid \max(x_1 -5, x_2 -3 ) = 0 \}$. 
The last three rows yield the inequalities:
\begin{align*}
  \max(x_2, 0) &\geq x_1 -7  \,,\\
  \max(x_1 -7, x_2 -2) &\geq 0 \,, \\
  x_1 &\geq \max(x_2 -6, 0) \,,
\end{align*}
which define the half-spaces respectively depicted in purple, green and khaki in Figure~\ref{fig:intro_example}.
\end{example}

A point $x$ in a tropical polyhedron $\tropP(A,b)$ clearly satisfies the inequalities $x_j \geq \zero$ for all $j \in[n]$.  Although redundant, including these inequalities in the representation of a tropical polyhedron is occasionally useful.  
\begin{assumption} \label{assumption_C} For all $j \in [n]$, all points $x\in\tropP(A,b)$ satisfy
  $x_j> \zero$ or the non-negativity constraint $x_j \geq \zero$ appears in the external representation of $\tropP(A,b)$,
  \ie\ there exists a row index $i \in [m]$ such that $(\pr_i \ b_i)$ is the row vector whose $j$th entry is $\unit$
  while all other entries are $\zero$.
\end{assumption}
 
The Minkowski--Weyl theorem holds in the tropical case: a tropical polyhedron can be defined either externally (\ie\ by means of half-spaces), or internally as the convex hull of finitely many points and rays. 
\begin{theorem}[{\cite[Theorem~2]{GaubertKatz2011minimal}}] \label{thm:trop_minkowski_weyl}
A subset $\tropP \subset \trop^n$ is a tropical polyhedron if, and only if, there exist two finite sets $V,R \subset \trop^n$ such that 
\begin{equation*}
\tropP =\{ x \tplus y \mid x \in  \tconv(V) \text{ and } y \in \tcone(R) \} \ .
\end{equation*}
\end{theorem}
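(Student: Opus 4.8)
The plan is to reduce the affine statement to its homogeneous counterpart, namely that a tropical cone is a finite intersection of \emph{linear} tropical half-spaces if and only if it is finitely generated, and then to prove that equivalence in two directions. The two implications call for quite different tools: a tropical separation theorem for one, and a tropical analogue of the double-description (Fourier--Motzkin) procedure for the other.

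\emph{Homogenization.} I would first introduce an extra coordinate $x_0$ and pass from $\trop^n$ to $\trop^{n+1}$. To finite sets $V$ and $R$ one associates the linear tropical cone $C := \tcone\bigl(\{(\unit, v) : v \in V\} \cup \{(\zero, r) : r \in R\}\bigr) \subset \trop^{n+1}$; to $\tropP(A,b)$ one associates the linear cone $\hat C$ cut out by the inequalities $A^+ \tdot x \tplus x_0 \ttimes b^+ \geq A^- \tdot x \tplus x_0 \ttimes b^-$. One checks directly that $x$ lies in $\{x' \tplus y' : x' \in \tconv(V),\ y' \in \tcone(R)\}$ exactly when $(\unit, x) \in C$, and that $x \in \tropP(A,b)$ exactly when $(\unit, x) \in \hat C$; moreover, a finite generating set of a linear cone splits, after rescaling, into the generators with $x_0 = \unit$ — which yield a set $V$ of points — and those with $x_0 = \zero$ — which yield a set $R$ of rays — while the projection of a tropical combination correspondingly separates into a genuine tropical convex combination plus a conic combination. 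Hence the theorem follows once the homogeneous equivalence stated above is known. (Emptiness of $\tropP$, and degenerate configurations relative to the hyperplane $x_0 = \zero$, are dispatched separately and easily.)

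\emph{From finitely generated to externally presented.} Let $C = \tcone(g^1, \dots, g^p)$. Such a cone is closed in the order topology — a bounded tropical combination of finitely many vectors realizes its $\argmax$ pattern in only finitely many ways, so $C$ is a finite union of ordinary polyhedra — and for closed tropical cones the separation theorem applies: any point $z \notin C$ lies outside some linear tropical half-space $\tropH^{\geq}(a, \zero)$ that still contains $C$. To see that \emph{finitely many} such half-spaces already cut out $C$, I would use the covector (type) decomposition of $\trop^{n+1}$ induced by $g^1, \dots, g^p$: it has finitely many cells, the boundary of $C$ is the union of those cells lying on tropical hyperplanes spanned by subsets of the $g^k$, and the associated finite list of half-spaces intersects exactly to $C$.

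\emph{From externally presented to finitely generated.} Here I would induct on the number $m$ of linear half-spaces, the base case $m=0$ being $\trop^{n+1} = \tcone(e^1, \dots, e^{n+1})$ with $e^k$ the vector carrying $\unit$ in position $k$ and $\zero$ elsewhere. For the inductive step, let $C'$, the intersection of the first $m-1$ half-spaces, be generated by $g^1, \dots, g^p$, and intersect it with one more half-space $\tropH^{\geq}(a, \zero) = \{x : a^+ \tdot x \geq a^- \tdot x\}$. Partition the generators into $G^{\geq}$ (those satisfying the inequality) and $G^{<}$ (those violating it), and claim that $C' \cap \tropH^{\geq}(a, \zero)$ is generated by $G^{\geq}$ together with, for every pair $g \in G^{\geq}$ and $h \in G^{<}$, one well-chosen tropical combination $\lambda \tscale g \tplus h$ landing on the bounding s-hyperplane $\tropH(a, \zero)$. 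Establishing this claim is the crux, and the main obstacle, of the whole proof: one must (i) produce such a $\lambda$ by an intermediate-value argument for the real-valued, piecewise-affine function $\lambda \mapsto a^+ \tdot (\lambda \tscale g \tplus h) - a^- \tdot (\lambda \tscale g \tplus h)$, which is negative near $\lambda = \zero$ (\ie\ at $h$) and nonnegative once $\lambda$ is large (where the combination is dominated by $g$); and (ii) conversely express an arbitrary point of $C' \cap \tropH^{\geq}(a, \zero)$ as a tropical combination of the augmented finite set, which is the genuinely max-plus part and forces one to track how the $\argmax$ patterns interact with the sign pattern of $a$. Once the homogeneous equivalence is in place, dehomogenizing as in the first paragraph produces the required finite sets $V$ and $R$, and the proof is complete.
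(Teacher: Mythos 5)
The paper never proves Theorem~\ref{thm:trop_minkowski_weyl}: it is imported verbatim from \cite[Theorem~2]{GaubertKatz2011minimal}, so the only meaningful comparison is with the literature, and your sketch is essentially the standard route found there. Your homogenization bookkeeping is correct (generators of the homogenized cone with finite extra coordinate rescale to give $V$, those with extra coordinate $\zero$ give $R$, and $x\in\tconv(V)\tplus\tcone(R)$ iff the lift with extra coordinate $\unit$ lies in the cone). The direction ``finite intersection of linear half-spaces $\Rightarrow$ finitely generated'' by induction on the number of half-spaces, augmenting $G^{\geq}$ with one boundary combination per pair in $G^{\geq}\times G^{<}$, is the classical tropical elimination/double description lemma (Butkovi\v{c}--Heged\"us, Gaubert; it is also the engine behind \cite{AllamigeonGaubertGoubaultDCG2013}), and the converse via closedness, separation \cite{zimmermann77,cgq02}, and finiteness of the covector (type) decomposition is the Develin--Sturmfels/Joswig/Gaubert--Katz route \cite{develin2004,Joswig05,GaubertKatz2011minimal}. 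The trade-off you describe is real: elimination is constructive, while separation needs the finite cell decomposition to cut an a priori infinite family of half-spaces down to a finite one.

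The reservation is that the two pivotal claims are asserted rather than proved, and they carry all the weight. In the inductive step, your part (ii) --- that every $x\in C'\cap\tropH^{\geq}(a,\zero)$ is a tropical combination of $G^{\geq}$ and the pair elements --- is the entire content of that direction. It is true and closable: in place of the intermediate-value choice of $\lambda$ one can take the explicit boundary element $(a^{-}\tdot h)\tscale g \tplus (a^{+}\tdot g)\tscale h$, then note that for $x=\tsum_k \lambda_k \tscale g^k$ in the half-space the maximum defining $a^{+}\tdot x$ must be attained on generators of $G^{\geq}$ (otherwise $a^{-}\tdot x > a^{+}\tdot x$), and absorb each violating term $\lambda_h\tscale h$ into a suitable multiple of a pair element; the corner cases where some of these scalars equal $\zero$ need a sentence. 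Likewise, ``the associated finite list of half-spaces intersects exactly to $C$'' is not yet an argument: the clean way is to show that the separating half-space of a point $z\notin C$ can be chosen to depend only on finitely many combinatorial data (the type of the projection of $z$ onto $C$), and the type machinery must be adapted to generators with $\zero$ entries, which the finite-entry setting of \cite{develin2004} does not cover; ``cells lying on tropical hyperplanes spanned by subsets of the $g^k$'' is not the right formulation as stated. These are known, fillable steps, so the strategy is sound, but as written this is a correct outline rather than a complete proof.
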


It will be convenient to homogenize a tropical polyhedron $\tropP(A,b)$ into the tropical
polyhedral cone $\tropC := \{ x \in \trop^{n+1} \mid \epm^+ \tdot x \geq \epm^- \tdot x \}$, where $\epm := (A\ b)$. As a tropical cone,
$\tropC$ is closed under tropical scalar multiplication. For this reason, we identify $\tropC$ with its image in the \emph{tropical projective space}
 \[
 \tropProj^n \ := \  \left\{ \R \tscale x \mid x \in \trop^{n+1} \setminus \{(\zero, \dots, \zero)\} \right\} \,.
 \]

The points of the tropical polyhedron $\tropP(A,b)$ are associated with elements of the tropical polyhedral cone
$\tropC$ by the following bijection:
\begin{equation}
\begin{aligned}
\tropP(A,b) & \longrightarrow \{ y \in \tropC \mid y_{n+1} = \unit \} \\
x & \longmapsto (x,\unit) 
\end{aligned}
\label{eq:homogenization}
\end{equation}
The points of the form $(x,\zero)$ in $\tropC$ correspond to the rays in the recession cone of $\tropP(A,b)$, see~\cite{GaubertKatz2011minimal}.
 
\begin{remark}\label{rem:tcone}
  Let $R\in\trop^{m\times n}$ be a matrix with finite coefficients only. Then $\tropP=\tcone(R)$ is a tropical
  polyhedral cone in $\trop^n$ such that the image of $\tropP\cap\R^n$ under the canonical projection from $\R^n$ to the
  \emph{tropical torus} $\{ \R \tscale x \mid x \in \R^n \}$ is a ``tropical polytope'' in the sense of Develin and
  Sturmfels \cite{develin2004}.  Via this identification, the tropical linear halfspaces which are non-empty proper subsets of $\trop^n$ correspond to the
  ``tropical halfspaces'' studied in \cite{Joswig05}.  The tropical projective space defined above compactifies the
  tropical torus (with boundary).
\end{remark}

\subsection{Puiseux series} \label{sec:puiseux}

The set $\R\{\!\{t\}\!\}$ of \emph{(generalized) Puiseux series} with real coefficients is the set of formal power series
\begin{equation*}
  \x = \sum_{ \alpha \in \R } x_{\alpha} t^{\alpha}
\end{equation*}
with $x_{\alpha} \in \R$ such that the support $\{ x_{\alpha} \mid x_{\alpha} \neq 0 \}$ is either a
finite set or the set of valuations of a increasing unbounded sequence. By definition of its support, every non-null Puiseux
series $\x$ admits a smallest exponent $\alpha_{\min} \in \R$. The real number $- \alpha_{\min}$ is called the \emph{valuation} of
$\x$ and is denoted $\val(\x)$. By convention, we set $\val(0) = - \infty$. The \emph{leading coefficient}, denoted
$\lc(\x)$, is the coefficient $x_{\alpha_{\min}}$ of the smallest exponent $\alpha_{\min} =- \val(\x)$ when $\x \neq 0$, and $0$
otherwise.  Throughout the paper, we write $\puiseux$ instead of $\R\{\!\{t\}\!\}$.

The set of generalized Puiseux series, equipped with the sum and product of formal power series, constitutes a field.  It
can be identified with a subfield of the field of Hahn series, \ie\ formal power series with arbitrary well-ordered
support.  A variant of this field 
was also considered by Hardy under the name of ``generalized Dirichlet series''.  Our
approach follows Markwig~\cite{markwig2007field}.

The $n$-fold Cartesian product $\puiseux^n$ is a $\puiseux$-vector space when equipped with the scalar multiplication $(\plambda, \x) \mapsto \plambda \x := ( \plambda \x_1, \dots, \plambda \x_n)$ and the vector addition $(\x, \y) \mapsto \x+\y :=(\x_1 + \y_1, \dots, \x_n + \y_n)$.

A Puiseux series $\x$ is said to be \emph{positive} if $\lc(\x) > 0$, and we write $\x > 0$ in this
case. Similarly, we write $\x > \y$ if $\x - \y > 0$.  This definition turns $\puiseux$ into an ordered field. The topology induced by this order makes $\puiseux$ a topological field.

The valuation is a map from $\puiseux$ to $\trop$ which satisfies
\begin{align*}
  \val(\x\y) &= \val(\x) \ttimes \val(\y) \\
  \val( \x + \y ) & \leq \val(\x) \tplus \val(\y).
\end{align*}
Equality occurs in the last inequality if and only if the leading terms of $\x$ and $\y$ do not cancel. In particular,
cancellation never occurs whenever $\x$ and $\y$ share the same sign.
This property is the main reason for using Puiseux series to study the tropical semiring. Indeed, the map $\val$
defines a homomorphism from the semiring $\puiseux_+$ of non-negative Puiseux series to the tropical semiring.  This
homomorphism is order preserving, that is,
\[
\text{if } \x \geq \y \geq 0 \text{ then } \val(\x) \geq \val(\y) \enspace .
\]

It is convenient to equip the valuation with a sign information. We define the \emph{signed valuation} map by:
\begin{align*}
  \sval:\; &\puiseux \longrightarrow \strop \,\\
         & \x \longmapsto 
         \begin{cases}
           \val(\x) & \text{if} \ \x \geq 0 \ ,\\
           \tminus \val(\x) & \text{otherwise.}
         \end{cases}
\end{align*}
A \emph{lift} of a signed tropical number $x \in \strop$ is a Puiseux series $\x$ such that $\sval(\x)=x$. Clearly,
such a lift is by no means unique.  The set of all lifts will be denoted $\sval^{-1} (x)$.  The signed valuation
map is extended to vectors and matrices by component-wise application.  In the following, any Puiseux series will be
written in bold and its signed valuation with a standard font, \eg\ $x = \sval(\x)$.

\subsection{Puiseux linear programming solves tropical linear programming}

Hyperplanes, half-spaces and convex polyhedra can be defined over an arbitrary ordered field.  The most basic results
used in linear programming (Farkas' lemma, Minkowski--Weyl, Strong Duality, \etc) are of algebraic nature. Their proofs
only rely on the axioms of ordered fields, and consequently are also valid in this setting, see for
instance~\cite{Jeroslow1973,Megiddo1987,FilarAltmanAvrachenkov2002}. Actually, in the present paper, we  deal
with the field $\puiseux$ of Puiseux series with real coefficients, which is known to be real closed~\cite{markwig2007field}, \ie\ each
non-negative element is a square, and every polynomial with odd degree has at least one root. For such a field, stronger
results follow from Tarski's principle: any first-order sentence that is valid over the reals is also valid over an
arbitrary real closed field, and thus valid over $\puiseux$. We refer to~\cite{tarski1951decision,seidenberg1954new} for further details; see
also~\cite{basu2006algorithms} for a recent overview.  In order to have a concise name we call ordinary polyhedra
defined over $\puiseux$ \emph{Puiseux polyhedra}.

In this section, we examine how tropical polyhedra are related with Puiseux polyhedra in $\puiseux_+^n$ via the
valuation map.  In~\cite[Proposition~2.1]{DevelinYu07}, Develin and Yu prove that a tropical polyhedral cone $\tpos(R)$
can be lifted to a Puiseux polyhedral cone in $\puiseux^n_+$ by lifting the set $R$ of generators. This result can be
trivially extended to arbitrary tropical polyhedra, thanks to the tropical Minkowski--Weyl Theorem
(Theorem~\ref{thm:trop_minkowski_weyl}), by lifting the whole internal representation. Alternatively, we shall see that a tropical
polyhedron can also be lifted to a Puiseux polyhedron in $\puiseux_+^n$ by lifting its external representation by
half-spaces.  As a consequence, an optimal solution to a tropical linear program can be found by solving a linear
program over Puiseux series.

We denote by $\H(\a,\b)$ the hyperplane over $\puiseux^n$ defined by the equality $\a \x + \b = 0$, where $\a \in
\puiseux^{1 \times n}$ and $\b \in \puiseux$. The hyperplane $\H(\a,\b)$ induces the half-space $\H^{\geq}(\a,\b)$ by
replacing the equality constraint by the inequality $\geq$.
We will denote Puiseux polyhedra as follows:
\[
\puiseuxP(\A,\b) := \{ \x \in \puiseux^n \mid \A \x + \b \geq 0
\} \ ,
\]
where $\A \in \puiseux^{m \times n}$ and $\b \in \puiseux^m$.

We now consider a tropical linear program:
 \begin{equation}
   \begin{array}{ll}
     \text{\rm minimize} & c \tdot x \ \\
     \text{\rm subject to} & x \in \tropP(A,b)\\
   \end{array}
   \label{eq:trop_linear_prog_pb}  
 \end{equation}
 where $A \in \strop^{m \times n}$, $b \in \strop^m$ are signed matrices and $c \in \trop^{1 \times n}$ is an unsigned
 row vector.
 
\begin{proposition} \label{prop:puiseux_solves_tropical_program}
There is a way to associate to every tropical linear program of
the form~\eqref{eq:trop_linear_prog_pb}
satisfying Assumption \ref{assumption_C} a Puiseux linear program 
\begin{equation}
\begin{array}{ll}
\text{\rm minimize} & \cc \x \ \\
\text{\rm subject to} & \x \in \puiseuxP(\A,\b)\\
\end{array}
\label{eq:puiseux_linear_prog_pb}  
\end{equation}
satisfying $\A \in \sval^{-1} (A)$, $\b \in \sval^{-1}(b)$ and $\cc \in \sval^{-1} (c)$, so that:
\begin{enumerate}[(i)]
\item the image by the valuation of the feasible set of the linear program~\eqref{eq:puiseux_linear_prog_pb} is precisely the feasible set of the tropical linear program~\eqref{eq:trop_linear_prog_pb}; in particular, \eqref{eq:puiseux_linear_prog_pb} is feasible if and only if \eqref{eq:trop_linear_prog_pb} is feasible;
\item the valuation of any optimal solution of~\eqref{eq:puiseux_linear_prog_pb} (if any) is an optimal solution of~\eqref{eq:trop_linear_prog_pb}.
\end{enumerate}
\end{proposition}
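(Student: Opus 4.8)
The plan is to lift the data of the tropical linear program to real Puiseux series in a sign-compatible way, to prove the feasibility correspondence~(i) by two inclusions, and to deduce the optimality statement~(ii) by monotonicity. For every entry I choose a Puiseux series of the prescribed signed valuation, $\A_{ij}\in\sval^{-1}(a_{ij})$, $\b_i\in\sval^{-1}(b_i)$, $\cc_j\in\sval^{-1}(c_j)$; concretely one can take monomials $\sign(a_{ij})\,\lambda_{ij}\,t^{-|a_{ij}|}$ (and $0$ when the tropical entry is $\zero$), leaving the positive leading coefficients $\lambda_{ij}$ free until the very end. With \emph{any} such choice the inclusion $\val\bigl(\puiseuxP(\A,\b)\cap\puiseux_+^n\bigr)\subseteq\tropP(A,b)$ holds: for $\x\in\puiseux_+^n$ each term $\A_{ij}\x_j$ carries the sign of $\A_{ij}$, so the defining inequality $\A_i\x+\b_i\ge 0$ reads $\A_i^+\x+\b_i^+\ge\A_i^-\x+\b_i^-$, a comparison of two sums of \emph{nonnegative} Puiseux series. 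No leading-term cancellation can occur on either side, so $\val(\A_i^+\x+\b_i^+)=A_i^+\tdot\val(\x)\tplus b_i^+$ and similarly on the right; applying the order-preserving valuation gives $\val(\x)\in\tropH^{\ge}(\pr_i,b_i)$ for every $i$, that is, $\val(\x)\in\tropP(A,b)$. The same computation yields $\val(\cc\x)=c\tdot\val(\x)$ for all $\x\in\puiseux_+^n$, which will be needed for~(ii). Assumption~\ref{assumption_C} is what lets one identify the feasible set of~\eqref{eq:puiseux_linear_prog_pb} with its intersection with $\puiseux_+^n$, since it forces the relevant nonnegativity constraints $\x_j\ge 0$ to be among the lifted rows.

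For the reverse inclusion I reduce to the generators. By the tropical Minkowski--Weyl theorem (Theorem~\ref{thm:trop_minkowski_weyl}) there are finite sets $V,R\subset\trop^n$ with $\tropP(A,b)=\{\,x\tplus y\mid x\in\tconv(V),\ y\in\tcone(R)\,\}$, and by the Develin--Yu lifting of an internal description (\cite[Proposition~2.1]{DevelinYu07}, extended to polyhedra via Theorem~\ref{thm:trop_minkowski_weyl} as recalled above) one can lift $V$ and $R$ to finite sets $\bm V,\bm R\subset\puiseux_+^n$ such that the Puiseux polyhedron $\bm Q:=\conv(\bm V)+\cone(\bm R)$ satisfies $\val(\bm Q)=\tropP(A,b)$ and $\bm Q\subseteq\puiseux_+^n$. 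The key step is to choose the still-free leading coefficients of $\A$ and $\b$ so that $\bm Q\subseteq\puiseuxP(\A,\b)$, equivalently so that every lifted vertex $\bm v\in\bm V$ satisfies $\A\bm v+\b\ge 0$ and every lifted ray $\bm r\in\bm R$ satisfies $\A\bm r\ge 0$. Granting this, $\puiseuxP(\A,\b)$ --- a convex set containing $\bm V$ and stable under adding the recession directions in $\bm R$ --- contains $\bm Q$, so $\tropP(A,b)=\val(\bm Q)\subseteq\val\bigl(\puiseuxP(\A,\b)\cap\puiseux_+^n\bigr)$; combined with the first inclusion this proves~(i), and in particular the equivalence of feasibility.

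The main obstacle is precisely the compatibility of this external lift with the internal one. Fix a tropical generator --- a vertex $v_k$ or a recession ray $r_\ell$ --- together with a constraint index $i$. If the generator lies in the interior of $\tropH^{\ge}(\pr_i,b_i)$, the lifted inequality holds for \emph{every} sign-compatible choice, by the no-cancellation argument above applied now with a strict inequality of valuations; only generators on the boundary $\tropH(\pr_i,b_i)$ constrain the leading coefficients, and by Assumption~\ref{assumption_B} the indices attaining the maximum there split into the two \emph{disjoint} index sets feeding $\A_i^+$ and $\A_i^-$, so the constraint is a weak (generically strict) linear inequality in the free leading coefficients and the leading coefficients of the generator's coordinates. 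The delicate point, which I expect to be the heart of the proof, is the simultaneous solvability of this finite system over all generator/constraint pairs, arranged by a sufficiently generic choice that is compatible with the Develin--Yu lifting; this is where one keeps the lift ``on the favourable side of the valuation''. Finally,~(ii) is routine: if $\x^{\ast}$ is optimal for~\eqref{eq:puiseux_linear_prog_pb} then $\val(\x^{\ast})\in\tropP(A,b)$ by~(i); given any $y\in\tropP(A,b)$, pick by~(i) a feasible $\y\in\puiseux_+^n$ with $\val(\y)=y$, so $0\le\cc\x^{\ast}\le\cc\y$ by optimality; applying the order-preserving valuation and using $\val(\cc\x)=c\tdot\val(\x)$ on $\puiseux_+^n$ gives $c\tdot\val(\x^{\ast})\le c\tdot y$, and since $y$ was arbitrary, $\val(\x^{\ast})$ is an optimal solution of~\eqref{eq:trop_linear_prog_pb}.
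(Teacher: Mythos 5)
Your first inclusion and your deduction of (ii) from (i) are fine and essentially coincide with the paper's. The problem is the reverse inclusion, which you reduce to choosing the free leading coefficients $\lambda_{ij}$ (and lifts of the Minkowski--Weyl generators) so that every lifted generator satisfies every lifted constraint, and then explicitly leave unresolved: you write that the simultaneous solvability of this finite system is ``the delicate point, which I expect to be the heart of the proof.'' That is precisely the content of the proposition, so as it stands the proposal has a genuine gap at the crucial step. A generic choice of coefficients does not work: at a generator lying on the boundary of $\tropH^{\geq}(\pr_i,b_i)$ the required inequality between the two leading sums has a prescribed direction, and genericity only guarantees strictness, not the correct sign. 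Some quantitative mechanism forcing the positive side to dominate is needed, and none is supplied.

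The paper closes exactly this gap with a simple uniform trick that also makes your detour through Theorem~\ref{thm:trop_minkowski_weyl} and the Develin--Yu internal lift unnecessary. One takes the monomial lift in which every entry of $(\A^+\ \b^+)$ carries the \emph{same} leading coefficient $\alpha>n+1$ while every entry of $(\A^-\ \b^-)$ has leading coefficient $1$, as in~\eqref{eq:puiseux_lift}. Then for an \emph{arbitrary} feasible tropical point $x\in\tropP(A,b)$ (no reduction to generators needed) the monomial lift $\x=(t^{-x_1},\dots,t^{-x_n})$ is feasible: the negative side is a sum of at most $n+1$ monomials, hence bounded by $(n+1)t^{-(a_i^-\tdot x\tplus b_i^-)}<\alpha t^{-(a_i^-\tdot x\tplus b_i^-)}$, while the positive side is at least $\alpha t^{-(a_i^+\tdot x\tplus b_i^+)}\geq\alpha t^{-(a_i^-\tdot x\tplus b_i^-)}$. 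If you want to keep your generator-based route, the same idea repairs it (scale all positive-side coefficients by one sufficiently large constant, depending only on $n$ and the finitely many fixed generator leading coefficients), but you must actually carry out this step; without it the proof is incomplete.
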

Notice that the converse of (ii) does not necessarily hold.  That is, there are tropical linear programs with optimal solutions which do
not arise as projections from any lift; see Example~\ref{example:running_example} below.
\begin{proof}
  To begin with, we will exhibit lifts $\A \in \sval^{-1} (A)$ and $\b \in \sval^{-1}(b)$ of the external representation such that $\val(\puiseuxP(\A,\b)) =
  \tropP(A,b)$.  The inclusion $\val(\puiseuxP(\A,\b)) \subset \tropP(A,b)$ is satisfied for any lifts $\A$, $\b$. Indeed, consider a point $\x \in
  \puiseuxP(\A,\b)$.  Then, by Assumption \ref{assumption_C}, the polyhedron $\puiseuxP(\A,\b)$ is included in the non-negative orthant $\puiseux^n_+$.  Let
  $(\A \ \b) = (\A^+ \ \b^+) - (\A^- \ \b^-)$ where the entries of $(\A^+ \ \b^+)$ and $(\A^- \ \b^-)$ are non-negative. Every point $\x \in \puiseuxP(\A,\b)$
  satisfies $\A^+ \x + \b^+ \geq \A^- \x +\b^-$.  Since the Puiseux series on each side of these inequalities are non-negative, the valuation preserves their
  ordering and $A^+ \tdot x \tplus b^+ \geq A^- \tdot x \tplus b^-$.

  We claim that the reverse inclusion holds for any lift of the form $(\A \ \b) = (\A^+ \ \b^+) - (\A^- \ \b^-)$ defined, for $i \in [m]$ and $j \in [n]$ by:
\begin{equation}\label{eq:puiseux_lift}
  \begin{aligned}
    \A^+ & = (\alpha t^{-a^+_{ij}})  \text{ and } \A^- = (t^{-a^-_{ij}}) \\
    \b^+ &= (\alpha t^{-b^+_i}) \text{ and } \b^- = (t^{-b^-_i})
  \end{aligned}
\end{equation}
  where $\alpha$ is a real number strictly greater than $n+1$, and $A = (a_{ij})$. 
 To see this, observe that for any $x \in \tropP(A,b)$ the lift $\x = (t^{-x_1}, \dots,
  t^{-x_n})$ belongs to the Puiseux polyhedron $\puiseuxP(\A,\b)$. Indeed, for any $i \in [m]$ we have
  \[
  \ppr^-_i \x + \b^-_i = \sum_{j=1}^n t^{-a^-_{ij}- x_j} + t^{-b^-_i} \leq (n+1) t^{-(a^-_i \tdot x \tplus b^-_i)} < \alpha t^{-(a^-_i \tdot x \tplus b^-_i)}
  \]
  and
  \[
  \ppr^+_i \x + \b^+_i = \alpha \Bigl(\sum_{j=1}^n t^{ -a^+_{ij}- x_j} + t^{-b^+_i}\Bigr) \geq \alpha t^{ -(a^+_i \tdot x \tplus b^+_i )} \geq \alpha t^{ -(a^-_i \tdot x \tplus b^-_i )} \ ,
  \]
  thus $\ppr_i \x + \b_i > 0$.  This shows that a lift to real Puiseux series does exist.

  We need to prove the claimed properties of such a lift.  Let $\A$ and $\b$ as above. Since $\val(\puiseuxP(\A,\b)) = \tropP(A,b)$, the Puiseux linear
  program~\eqref{eq:puiseux_linear_prog_pb} is feasible if, and only if, the tropical one~\eqref{eq:trop_linear_prog_pb} is feasible.
  Now take any $\cc \in \sval^{-1}(c)$, \eg\ $\cc_j = t^{-c_j}$ for $j \in [n]$. If~\eqref{eq:puiseux_linear_prog_pb} admits an optimal solution $\x^*$, then $\cc \x \geq \cc \x^* \geq 0$ for all $\x \in \puiseuxP(\A,\b)$.  Since $c$ is non-negative, $c \tdot x \geq c \tdot \val(\x^*)$ for all $x \in \val ( \puiseuxP(\A,\b) ) = \tropP(A,b) $.  This concludes the proof.
\end{proof}

\begin{remark}
  Observe that in Proposition~\ref{prop:puiseux_solves_tropical_program}, the Puiseux linear program~\eqref{eq:puiseux_linear_prog_pb} cannot be
  unbounded. Indeed,  for all lifts $(\A \ \b) \in \sval^{-1}(A \ b )$, we have $\puiseuxP(\A,\b) \subset \puiseux_+^n$ thanks to
  Assumption~\ref{assumption_C}. Since  $c$ has tropically positive entries, its lift $\cc$ also have positive entries. Then the inequality $\cc \x \geq 0$ holds for all $\x \in \puiseuxP(\A, \b)$, and thus provides a lower
  bound for the minimization problem~\eqref{eq:puiseux_linear_prog_pb}.
\end{remark}

\begin{figure}[t]
  \begin{minipage}{\textwidth}
    \centering
  \begin{tikzpicture}[scale=1.0]
 \node[anchor=south west,inner sep=0] (tropical_polytope) at (0,0) {\includegraphics[width=0.45\textwidth]{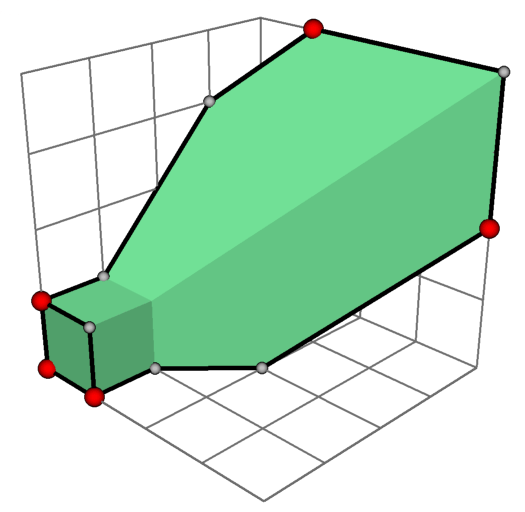}};
 \begin{scope}[x={(tropical_polytope.south east)},y={(tropical_polytope.north west)}]
\node at (-0.02, 0.28) {$(0,0,0)$};
\node at (0.00, 0.91) {$(0,0,4)$};
\node at (0.5, -0.02) {$(4,0,0)$};
\node at (1.01, 0.28) {$(4,4,0)$};
\node at (0.95, 0.92) {$(4,4,4)$};
 \end{scope}
 \node[right  = of tropical_polytope] (tropical_halfspaces)  {\includegraphics[width=0.44\textwidth]{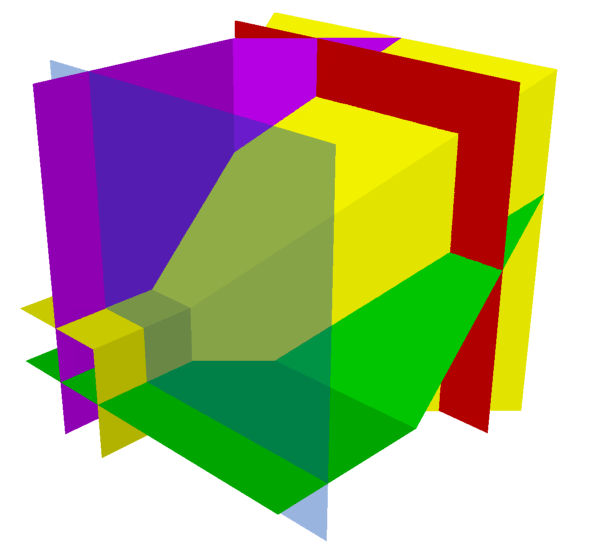}};
\end{tikzpicture}
  \caption{The tropical polyhedron defined by the inequalities \eqref{eq:running_example} and its external representation.}
  \label{fig:tropical_polytope}
\end{minipage}

\begin{minipage}{\textwidth}
  \centering
  \begin{tikzpicture}[scale=1.0]
 \node[anchor=south west,inner sep=0] (lift_polytope) at (0,0) {\includegraphics[width=0.45\textwidth]{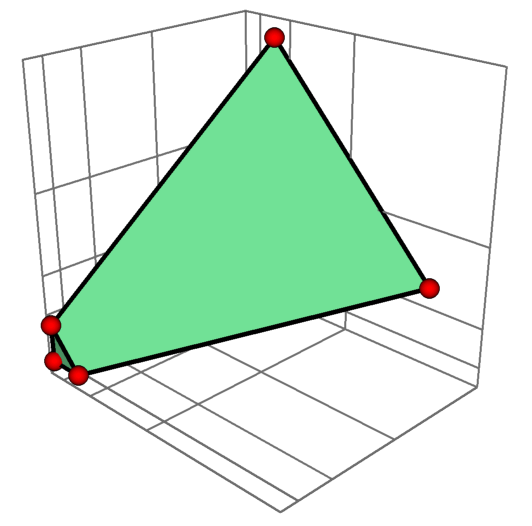}};
 \begin{scope}[x={(lift_polytope.south east)},y={(lift_polytope.north west)}]
\node at (-0.02, 0.28) {$(t^0,t^0,t^0)$};
\node at (0.00, 0.94) {$(t^0,t^0,t^{-4})$};
\node at (0.55, -0.01) {$(t^{-4},t^0,t^0)$};
\node at (1.0, 0.20) {$(t^{-4},t^{-4},t^0)$};
\node at (0.95, 0.94) {$(t^{-4},t^{-4},t^{-4})$};
 \end{scope}
 \node[right  = of lift_polytope] (lift_halfspaces)  {\includegraphics[width=0.42\textwidth]{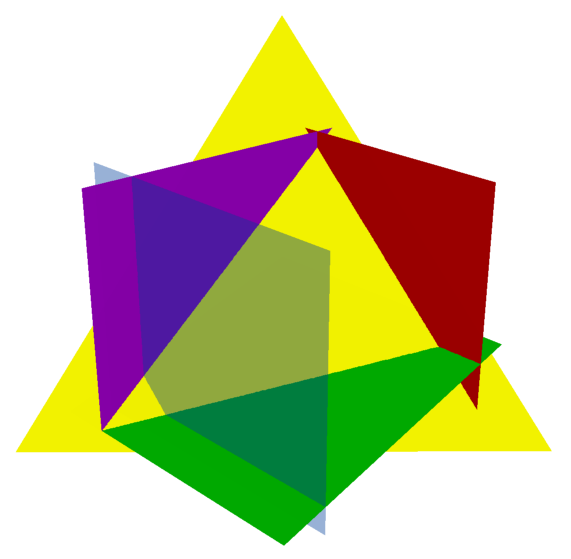}};
\end{tikzpicture}
  \caption{A lift of the tropical polyhedron defined by the inequalities~\eqref{eq:running_example} and its external representation.}
  \label{fig:puiseux_polytope}
\end{minipage}
\end{figure}

We offer a visualization for Proposition~\ref{prop:puiseux_solves_tropical_program} in Figure~\ref{fig:puiseux_polytope}.  As model theory
explains, polyhedra over real Puiseux series, for most purposes, pretty much are the same as classical polyhedra over the reals.  Therefore,
we can also visualize the lift in Proposition~\ref{prop:puiseux_solves_tropical_program} as a classical polyhedron.  More precisely, the
constraints of the lift are provided by~\eqref{prop:puiseux_solves_tropical_program}, and we can choose the lifted objective vector to be
$\cc_j = t^{-c_j}$ for $j \in [n]$. Now, replacing the parameter $t$ by a real number provides a real linear program.  If that real number
is sufficiently small, the classical linear program over the reals is combinatorially equivalent to the Puiseux one, \ie\ that is they share
the same vertex--facet incidences and hence they share the same vertex--edge graph; the optimal vertices are in bijection.

\begin{example} \label{example:running_example}
Throughout the rest of this paper, we will illustrate our results on the following tropical linear program.
  \begin{subequations}
    \renewcommand{\theequation}{\theparentequation.\arabic{equation}}
    \label{eq:running_example}
    \begin{align}
     & \text{minimize } & \max(x_1-2, x_2, x_3-1)&  & \nonumber \\
     & \text{subject to } &
     \max( 0, x_2-1) & \geq \max(x_1-1, x_3-1)&  \begin{tikzpicture}[baseline,scale=1] \draw[fill=yellow ] (0,0.0)
        rectangle (1,0.1);
      \end{tikzpicture} \tag*{$\tropH_1$} \label{eq:running_example_eq2}
      \\
     && x_3 & \geq \max(0, x_2-2) & \begin{tikzpicture}[baseline,scale=1] \draw[fill = green ] (0,0.0) rectangle (1,0.1);
      \end{tikzpicture} \tag*{$\tropH_2$}  \label{eq:running_example_eq3}
      \\
     && x_2 & \geq  0 &  \begin{tikzpicture}[baseline,scale=1] \draw[fill={rgb:red,0;green,102;blue,255}, opacity = 0.7 ]
        (0,0.0) rectangle (1,0.1);
      \end{tikzpicture} \tag*{$\tropH_3$}  \label{eq:running_example_eq4}
      \\
     && x_1 & \geq  \max(0, x_2-3) & \begin{tikzpicture}[baseline,scale=1] \draw[fill={rgb:red,204;green,0;blue,255} ]
        (0,0.0) rectangle (1,0.1);
      \end{tikzpicture} \tag*{$\tropH_4$} \label{eq:running_example_eq5}
      \\
     && 0 & \geq  x_2-4  \ .&
      \begin{tikzpicture}[baseline,scale=1]
        \draw[fill=red ] (0,0.0) rectangle (1,0.1);
      \end{tikzpicture} \tag*{$\tropH_5$} \label{eq:running_example_eq1}
    \end{align}
  \end{subequations}
These constraints define the tropical polyhedron represented in Figure~\ref{fig:tropical_polytope}. A lift of this tropical polyhedron is depicted in Figure~\ref{fig:puiseux_polytope}.
The optimal valuation of this tropical linear program is $0$ and the set of optimal solutions  is the ordinary square: 
\[
\{ (x_1,x_2,x_3) \in \trop^3 \mid 0 \leq x_1 \leq 1 \text{ and } x_2=0  \text{ and } 0 \leq x_3 \leq 1\} .
\]
However, over Puiseux series, there is a unique optimum.  It is the point located in the intersection of three hyperplanes obtained by
lifting the inequalities \eqref{eq:running_example_eq3}, \eqref{eq:running_example_eq4} and \eqref{eq:running_example_eq5}. This point has
valuation $(0,0,0)$, which is an optimum for the tropical linear program.  Corollary~\ref{coro:val_and_inter_commute_for_generic_matrices}
and Proposition~\ref{prop-pivotingimprove} below assert this does not depend on the choice of the lift.  The reason is that our example
satisfies the standard conditions mentioned in Theorem~\ref{th-main}.

We will also present several results in the homogeneous setting. They will be illustrated on the tropical cone defined by the following inequalities:
\begin{equation}
\begin{aligned}
\max(x_4, x_2-1) & \geq \max(x_1-1, x_3-1) \\
x_3 & \geq \max(x_4, x_2-2) \\
x_2 & \geq x_4 \\
x_1 & \geq \max(x_4, x_2-3) \\
x_4 & \geq  x_2-4
\end{aligned}
\label{eq:homogenized_running_example}
\end{equation}
This cone corresponds to the previous polyhedron by the correspondence given in~\eqref{eq:homogenization}, \ie\ the coordinate $x_4$ plays the role of the affine component. For the sake of simplicity, the linear half-spaces in~\eqref{eq:homogenized_running_example} are still referred to as~\eqref{eq:running_example_eq2}--\eqref{eq:running_example_eq1}.
\end{example}

\subsection{The simplex method}
A Puiseux linear program can be solved using the classical simplex method. We briefly recall the basic facts.
Let $\basis \subset [m]$ be a subset of cardinality $n$ such that the submatrix $\A_\basis$, formed from the rows with indices in $\basis$, is non-singular. The intersection $ \bigcap_{i \in \basis} \puiseuxH(\ppr_i, \b_i)$ contains a unique point, which we denote as $\x^\basis$. When $\x^\basis$ belongs to the polyhedron $\puiseuxP(\A,\b)$, it is called a \emph{(feasible)  basic point}.
\begin{remark}
  A basis is usually defined by a partition of the (explicitly bounded) variables $( \s_1, \dots, \s_m )$ in ``basic''  and ``non-basic'' variables, where $\s = \A \x + \b$. Observe that $\basis$ correspond to the ``non-basic'' variables as it  indexes the zero coordinates of $\s$. The set $\basis$ can also be interpreted as the set of ``basic'' variables in the dual program.
\end{remark}
For any set $I\subset[m]$ we let
\[
\puiseuxP_I(\A,\b) := \bigcap_{i \in I} \puiseuxH(\ppr_i, \b_i) \cap \puiseuxP(\A,\b) \,.
\]
A subset $K \subset [m]$ of cardinality $n-1$ defines the \emph{(feasible) edge} $\puiseuxE_K := \puiseuxP_K(\A,\b)$
when $\bigcap_{ i \in K} \puiseuxH(\ppr_i, \b_i)$ is an affine line that intersects $\puiseuxP(\A,\b)$.  Notice that an edge
defined in this way may have ``length zero'', \ie\ as a set it only consists of a single point.  A basic point $\x^\basis$ is
contained in the $n$ edges defined by the sets $\basis \setminus \{k\}$ for $k \in \basis$. The edge $\basis \setminus \{ k \}$ belongs to the line directed by the vector $\d^k$ with coordinates
\begin{equation}\label{eq:edge_direction_vector}
  \d^k_j = (-1)^{k+j}\frac{ \det \M_{kj} }{ \det \A_\basis} \ ,
\end{equation}
where $\M_{kj}$ is the matrix obtained from $\A_\basis$ by deleting its $k$th row and $j$th column.

As we are minimizing, moving along the edge $\basis\setminus \{k \}$ from the basic point $\x^\basis$ improves the objective
function if the \emph{reduced cost} $\y_k = \cc \d^k$ is negative. The vector of reduced costs $ \y = (\y_k)_{k \in \basis}$ forms a
solution of the following linear system of equations:
\begin{equation}
  - \A_\basis^{\top} \y + \cc = 0 \ .
\label{eq:puiseux_reduced_costs}
\end{equation}
Each iteration of the simplex method starts on some basic point $\x^\basis$. An edge $\basis \setminus \{k \}$ with a negative
reduced cost is selected. If no such edge exists, then the basic point is optimal by the Strong Duality Theorem of
Linear Programming \cite[\S5.5]{Schrijver03:CO_A} (which holds in any ordered field such as real Puiseux
series). Otherwise, the algorithm \emph{pivots}, \ie\ moves to the other end of the selected edge.
 Pivoting amounts to finding the \emph{length} $\pmu \in \puiseux$ of the
edge, which is given by:
\begin{equation}
\pmu = \inf \left \{ \frac{\ppr_i \x^\basis + \b_i}{ -\ppr_i \d^k} \mid i \in [m] \setminus \basis \text{ and } \ppr_i \d^k < 0 \right \} .
\label{eq:puiseux_pivot}
\end{equation}
If the edge is bounded, \ie\ if there exists an $i \in [m]\setminus \basis$ such that $\ppr_i \d^k <0$, then the algorithm reaches a new basic
point. Otherwise, the linear program is unbounded, and the valuation $\pmu$ is $\infty$.

\begin{remark}
  Basic points and directions are provided by determinants. If $(A \ b) = \val(\A \ \b)$ is
  tropically generic, this amounts to computing tropical permanents. However, the length $\pmu$ of the edge cannot be
  computed only with valuations.  This difficulty can be observed already in dimension one. Consider the Puiseux
  polyhedron defined by the inequalities:
  \[
  \x \leq 1 \text{ and }
  \x \geq t^{2} \text{ and }
  \x \geq t^{3} \ .
  \]
  Minimizing $\cc=1$ and starting from the basic point $\x=1$, the direction of the single pivot is $\d=-1$.  The pivoting step
  must decide where the edge ends; and in this case the edge length is given by $\pmu = \min(1-t^{2},
  1-t^{3})=1-t^{2}$.  Yet, the valuation of $1-t^{2}$ and $1-t^{3}$ yields zero in both cases.  This shows that, in
  order to find the correct minimum $t^2$, it does not suffice to look at the valuations of the optimal solutions of the Puiseux lift.
\end{remark}

\section{Tropical basic points and tropical edges} \label{sec:tropical_basic_points}

\noindent
Geometrically speaking, the classical simplex method traces the vertex-edge graph of an ordinary polyhedron from one
basic point along a directed path to an optimal solution, which again is basic. Basic points and edges over Puiseux
series are cells of the arrangement of hyperplanes $\{ \puiseuxH(\ppr_i,\b_i) \}_{i \in [m]}$.  It turns out that, under
some genericity assumptions, the valuation of these cells can be described by intersecting tropical half-spaces in
$\{\tropH^{\geq}(\pr_i, b_i) \} _{i \in [m]}$ and s-hyperplanes in $\{\tropH(\pr_i, b_i) \}_{i \in [m]}$.  This result will
be proved in Corollary~\ref{coro:val_and_inter_commute_for_generic_matrices} below.

\subsection{The tangent digraph}

Consider a matrix $\epm  = (\ep_{ij}) \in \strop^{m \times (n+1) }$.
For every point $x\in\trop^{n+1}$ with no $\zero$ entries, we define the \emph{tangent graph} $\graph_x(\epm)$ at the
point $x$ with respect to $\epm$ as a bipartite graph over the following two disjoint sets of nodes:
the  ``coordinate nodes'' $[n+1]$ and
 the ``hyperplane nodes'' $ \{ i \in [m] \mid \epr_i^+ \tdot x = \epr_i^- \tdot x > \zero \}$.
There is an edge between the hyperplane node $i $ and the coordinate node $j$ when $j \in \argmax(|\epr_i|\tdot x)$.

The \emph{tangent digraph} $\digraph_x(\epm)$ is an oriented version of $\graph_x(\epm)$, where the edge between the hyperplane
node $i$ and the coordinate node $j$ is oriented from $j$ to $i$ when $\ep_{ij}$ is tropically positive, and from $i$ to $j$ when $\ep_{ij}$ is tropically negative (if a tangent digraph  contains an edge between $i$ and $j$ then $\ep_{ij} \neq \zero$).

Examples of 
tangent digraphs are given in Figure~\ref{fig:tangent_graphs} below (there, hyperplane nodes are denoted $\tropH_i$). 
The term ``tangent'' comes from the fact that $\digraph_x(W)$ is a combinatorial encoding of the tangent cone at $x$ in the tropical cone $\tropC = \tropP(W, \zero)$, see~\cite{AllamigeonGaubertGoubaultDCG2013}. The tangent digraph is the same for any two points in the same cell of the arrangement of tropical hyperplanes
given by the inequalities.  
The tangent graph $\graph_x(\epm)$ corresponds to the ``types'' introduced in \cite{develin2004} but relative only to the hyperplanes given by the tight inequalities at $x$.

When there is no risk of confusion, we will denote by $\graph_x$ and $\digraph_x$ the tangent graph and digraph, respectively.

\newcommand{\hypOne}{\ref{eq:running_example_eq1}}
\newcommand{\hypTwo}{\ref{eq:running_example_eq2}}
\newcommand{\hypThree}{\ref{eq:running_example_eq3}}
\newcommand{\hypFour}{\ref{eq:running_example_eq4}}

\begin{figure}[t]
  \centering
\tikzset{
 vertex/.style={circle,draw=black, thick }, 
 hyp/.style={rectangle,draw=black, thick }, 
 edge/.style={draw=black,thick, >= triangle 45, ->}
}

\newcommand{\tangentGraphVarNodes} {
\node [vertex]    (z) at (0,0) {$3$};
\node [vertex]    (y) at (2,0) {$2$};
\node [vertex]    (x) at (2,2) {$1$};
\node [vertex]    (b) at (0,2) {$4$};
}

\begin{tikzpicture}[scale=1.3]

\begin{scope}[shift = {(0, 0) }]
\node at (1,-0.6) {At  $(1,0,0)$};
\tangentGraphVarNodes
\node [hyp] (2) at (1,2) {\hypTwo};
\node [hyp] (3) at (0,1) {\hypThree};
\node [hyp] (4) at (1,1) {\hypFour};

\draw[edge,->] (b) -- (2);
\draw[edge,->] (2) -- (x);

\draw[edge,->] (z) -- (3);
\draw[edge,->] (3) -- (b);

\draw[edge,->] (y) -- (4);
\draw[edge,->] (4) -- (b);
\end{scope}

\begin{scope}[shift = {( 3, 0) }]
\node at (1,-0.6) {In the open segment};
\node at (1,-1) {  $](1,0,0), (1,1,0)[$};
\tangentGraphVarNodes
\node [hyp] (2) at (1,2) {\hypTwo};
\node [hyp] (3) at (0,1) {\hypThree};

\draw[edge,->] (b) -- (2);
\draw[edge,->] (2) -- (x);

\draw[edge,->] (z) -- (3);
\draw[edge,->] (3) -- (b);
\end{scope}

\begin{scope}[shift = {( 6, 0) }]
\node at (1,-0.6) {At  $(1,1,0)$};
\tangentGraphVarNodes
\node [hyp] (2) at (1,1) {\hypTwo};
\node [hyp] (3) at (0,1) {\hypThree};

\draw[edge,->] (b) -- (2);
\draw[edge,->] (y) -- (2);
\draw[edge,->] (2) -- (x);

\draw[edge,->] (z) -- (3);
\draw[edge,->] (3) -- (b);
\end{scope}

\begin{scope}[shift = {( 9, 0) }]
\node at (1,-0.6) {In the open segment};
\node at (1,-1) {  $](1,1,0), (2,2,0)[$};
 \tangentGraphVarNodes
 \node [hyp] (2) at (2,1) {\hypTwo};
 \node [hyp] (3) at (0,1) {\hypThree};

 \draw[edge,->] (y) -- (2);
 \draw[edge,->] (2) -- (x);
 \draw[edge,->] (z) -- (3);
 \draw[edge,->] (3) -- (b);
\end{scope}

\begin{scope}[shift = {( 1.5, -4) }]
  \node at (1,-0.6) {At $(2,2,0)$};
  \tangentGraphVarNodes 
  \node [hyp] (2) at (2,1) {\hypTwo}; 
  \node [hyp] (3) at (1,1) {\hypThree};

  \draw[edge,->] (y) -- (2); 
  \draw[edge,->] (2) -- (x);

  \draw[edge,->] (z) -- (3); 
  \draw[edge,->] (3) -- (y); 
  \draw[edge,->] (3) -- (b);
\end{scope}

\begin{scope}[shift = {( 4.5, -4) }]
\node at (1,-0.6) {In the open segment};
\node at (1,-1) {  $](2,2,0), (4,4,2)[$};
  \tangentGraphVarNodes 
  \node [hyp] (2) at (2,1) {\hypTwo}; 
  \node [hyp] (3) at (1,0) {\hypThree};

  \draw[edge,->] (y) -- (2); 
  \draw[edge,->] (2) -- (x);

  \draw[edge,->] (z) -- (3); 
  \draw[edge,->] (3) -- (y);
\end{scope}

\begin{scope}[shift = {( 7.5, -4) }]
\node at (1,-0.6) {At  $(4,4,2)$};
\tangentGraphVarNodes
\node [hyp] (1) at (1,1) {\hypOne};
\node [hyp] (2) at (2,1) {\hypTwo};
\node [hyp] (3) at (1,0) {\hypThree};

\draw[edge,->] (b) -- (1);
\draw[edge,->] (1) -- (y);

\draw[edge,->] (y) -- (2);
\draw[edge,->] (2) -- (x);

\draw[edge,->] (z) -- (3);
\draw[edge,->] (3) -- (y);
\end{scope}

\end{tikzpicture}
\caption{Tangent digraphs at various points of the tropical cone obtained by homogenization of the tropical polyhedron
   defined by the inequalities~\eqref{eq:running_example}. Hyperplane nodes are rectangles and coordinate nodes are
   circles.} 
  \label{fig:tangent_graphs}%
\end{figure}
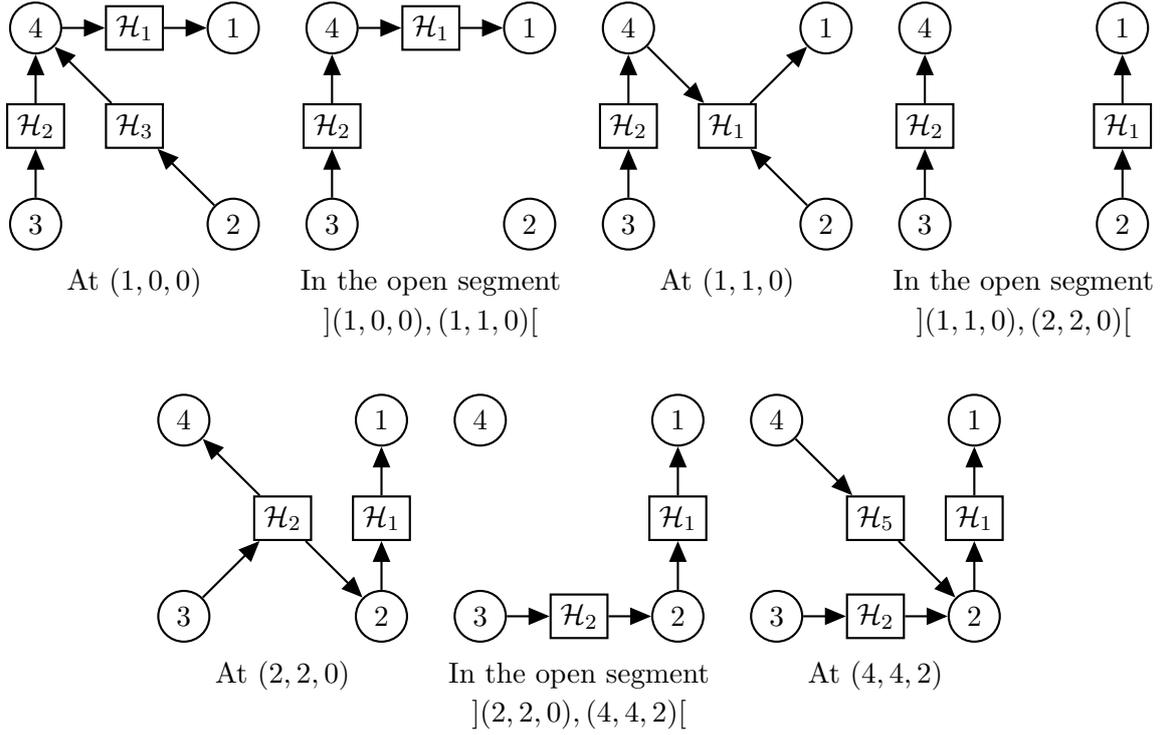

\begin{example}
Let $W$ be the matrix formed by the coefficients of the system~\eqref{eq:homogenized_running_example}, and consider the point 
$x = (1,0,0,0)$ (corresponding to $(1,0,0)$ via the bijection~\eqref{eq:homogenization}).  The inequalities (\hypTwo), (\hypThree) and (\hypFour) are tight at $x$. They read 
\begin{align*}
\max( \underline{x_4}, x_2-1) & \geq \max(\underline{x_1-1}, x_3-1) \\
\underline{x_3} & \geq \max(\underline{x_4}, x_2-2) \\
\underline{x_2} & \geq \underline{x_4}
\end{align*}
where we marked the positions where the maxima are attained. The tangent digraph $\digraph_x(W)$ is depicted in the top left of Figure~\ref{fig:tangent_graphs}. 
For instance, the first inequality provides the arcs from coordinate node $4$ to hyperplane node \hypTwo, and from \hypTwo\ to coordinate node $1$. 
\end{example}

If $I$ and $J$ are respectively subsets of the hyperplane and coordinate nodes of $\graph_x$, a \emph{matching} between $I$ and $J$ is a subgraph of $\graph_x$ with node set $I \cup J$ in which every node is incident to exactly one edge.
\begin{lemma}
  \label{lemma:matching_in_graph_is_max_permutation}
  Let $\epm \in \strop^{m \times (n+1)} $ and $x \in \trop^{n+1}$ be a point with no $\zero$ entries.  Suppose the tangent graph $\graph_x$ contains a matching
  between the hyperplane nodes $I$ and the coordinate nodes $J$. Then the submatrix $\epm'$ of $\epm$ formed from rows $I$ and columns $J$ is such that $|\epm'|$ has a finite
  tropical permanent.  Moreover, the matching yields a maximizing permutation in the latter tropical permanent.
\end{lemma}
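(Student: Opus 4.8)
The plan is to translate the existence of the matching into a statement about one specific permutation attaining the tropical permanent. A matching saturating $I \cup J$ exists, so $|I| = |J| =: k$ and $\epm'$ is genuinely a $k \times k$ matrix; let $\phi \colon I \to J$ be the bijection that this matching encodes. By the definition of the tangent graph, the presence of the edge $(i,\phi(i))$ in $\graph_x$ means $\phi(i) \in \argmax(|\epr_i| \tdot x)$, that is,
\[
|\ep_{i\phi(i)}| + x_{\phi(i)} \;=\; |\epr_i| \tdot x \qquad \text{for all } i \in I .
\]
Because $I$ consists of hyperplane nodes, $|\epr_i| \tdot x = \epr_i^+ \tdot x = \epr_i^- \tdot x > \zero$ is a finite real number for every $i \in I$; and since $x$ has no $\zero$ entries, each $x_j$ is finite, so each modulus $|\ep_{i\phi(i)}|$ is finite as well.

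The next step is one elementary computation. For an arbitrary bijection $\sigma \colon I \to J$, sum the inequalities $|\ep_{i\sigma(i)}| + x_{\sigma(i)} \le |\epr_i| \tdot x$ over $i \in I$. As $\sigma$ runs bijectively through $J$, the terms $x_{\sigma(i)}$ add up to the fixed finite number $\sum_{j \in J} x_j$, whence
\[
\sum_{i \in I} |\ep_{i\sigma(i)}| \;\le\; \sum_{i \in I} \bigl( |\epr_i| \tdot x \bigr) - \sum_{j \in J} x_j \, ,
\]
the right-hand side being a finite constant that does not depend on $\sigma$. Taking $\sigma = \phi$, each of the summed inequalities is an equality by the previous display, so this bound is attained. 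Consequently $\tper(|\epm'|)$ equals that finite constant — in particular it is not $\zero$, hence finite — and $\phi$ is a maximizing permutation, which is exactly the assertion of the lemma.

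There is no genuine obstacle here: the whole argument is bookkeeping. The only delicate point is the neutral element $\zero = -\infty$ appearing in the tropical sums. This is handled cleanly by never subtracting the quantities $x_{\sigma(i)}$ but keeping them on the left-hand side, so that every manipulation only adds finite quantities to both sides, and by using that $x$ has no $\zero$ coordinate together with the finiteness of $|\epr_i| \tdot x$ for $i \in I$ to ensure that the value we attain is a real number rather than $-\infty$. One should also note at the outset that a matching saturating $I \cup J$ forces $|I| = |J|$, so that speaking of a ``permutation'' of $\epm'$ makes sense.
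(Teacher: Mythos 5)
Your proof is correct and follows essentially the same route as the paper: use the tangent-graph definition to get $|\ep_{i\phi(i)}|+x_{\phi(i)}=|\epr_i|\tdot x\geq |\ep_{i\sigma(i)}|+x_{\sigma(i)}$, sum over $i$, cancel the finite coordinates $x_j$ (possible since $x$ has no $\zero$ entries), and deduce both that the matching permutation is maximizing and, from $|\epr_i|\tdot x>\zero$ together with the finiteness of the $x_j$, that the permanent is finite. No issues.
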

\begin{proof}
  Let $\{(i_1, j_1), \dots, (i_q, j_q)\}$ be  a matching between the hyperplanes nodes $I = \{i_1, \dots, i_q \}$ and the coordinate nodes $J= \{ j_1, \dots, j_q \}$. By definition of the tangent graph, 
 for all $p \in [q]$, we have:
  \[
  \begin{aligned}
    |\ep_{i_p j_p}| + x_{j_p} &\geq |\ep_{i_p l}| + x_{l} &\text{ for all } l \in [n+1] \ .
  \end{aligned}
  \]
 Since $x$ has no $\zero$ entries, this implies  $\sum_{p = 1}^q |\ep_{i_p j_p}| \geq  \sum_{p = 1}^q |\ep_{i_p \sigma(i_p)}|$ for any bijection $\sigma: I \rightarrow J$. Thus the tropical permanent of $|\epm'|$ is $\sum_{p = 1}^q |\ep_{i_p j_p}|$, which is obtained with the bijection $i_p \mapsto j_p$.

Consider a $p \in [q]$. By definition of the tangent graph $|\epr_{i_p}| \tdot x > \zero$. Moreover, we suppose that $x$ has finite entries. Thus $|\ep_{i_p j_p}| > \zero$. As a consequence, $\sum_{p = 1}^q |\ep_{i_p j_p}|$ is finite.
\end{proof}

\begin{lemma}
  \label{lemma:graph_of_types_is_cycle_free}\label{lemma:directed_cycle_sign_singularity}
Let $\epm \in \strop^{m \times (n+1)} $ and $x \in \trop^{n+1}$ be a point with no $\zero$ entries.
   If the tangent graph $\graph_x$ contains an  undirected cycle, then the matrix $\epm$ contains a square submatrix $\epm'$ such that $|\epm'|$ is tropically singular and $\tper(|\epm'|) > \zero$ . 
 Moreover, if the cycle is directed in the tangent digraph $\digraph_x$, then $\epm'$ is tropically sign singular.
\end{lemma}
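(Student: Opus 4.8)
The plan is to build the desired square submatrix straight out of the cycle and to exhibit two competing maximizing permutations of its tropical permanent. Since $\graph_x$ is bipartite and simple, an undirected cycle alternates between hyperplane nodes and coordinate nodes and has even length $2q$ with $q \geq 2$; I would write it as $i_1 - j_1 - i_2 - j_2 - \dots - i_q - j_q - i_1$, with $I = \{i_1, \dots, i_q\}$ pairwise distinct hyperplane nodes and $J = \{j_1, \dots, j_q\}$ pairwise distinct coordinate nodes, and take $\epm'$ to be the submatrix of $\epm$ on rows $I$ and columns $J$. The two perfect matchings of the cycle give the bijections $\sigma \colon i_p \mapsto j_p$ and $\pi \colon i_p \mapsto j_{p-1}$ (indices read cyclically, so $j_0 = j_q$); these differ because $\sigma(i_1) = j_1 \neq j_q = \pi(i_1)$ as soon as $q \geq 2$.

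Next I would invoke the definition of the tangent graph: each edge $(i_p, j_p)$ and each edge $(i_p, j_{p-1})$ of the cycle belongs to $\graph_x$, so both $j_p$ and $j_{p-1}$ realize the maximum in $|\epr_{i_p}| \tdot x$, whence $|\ep_{i_p j_p}| + x_{j_p} = |\ep_{i_p j_{p-1}}| + x_{j_{p-1}} = |\epr_{i_p}| \tdot x$. This common value is finite, since $i_p$ is a hyperplane node (so $|\epr_{i_p}| \tdot x > \zero$) and $x$ has finite entries. Summing over $p = 1, \dots, q$ and using $\sum_p x_{j_p} = \sum_p x_{j_{p-1}}$ gives $S := \sum_p |\ep_{i_p j_p}| = \sum_p |\ep_{i_p j_{p-1}}|$, a finite real number. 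By Lemma~\ref{lemma:matching_in_graph_is_max_permutation} applied to the matching $\{(i_p, j_p)\}$, the matrix $|\epm'|$ has finite permanent equal to $S$ and $\sigma$ is a maximizing permutation; the displayed identity shows that $\pi$ is maximizing as well. Since $\sigma \neq \pi$, the maximum in $\tper(|\epm'|)$ is attained at two distinct permutations, so $|\epm'|$ is tropically singular, and $\tper(|\epm'|) = S > \zero$ because $S$ is finite. This settles the first assertion.

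For the ``moreover'' part I would assume the cycle is directed in $\digraph_x$ and, starting the enumeration at a hyperplane node, write it as $i_1 \to j_1 \to i_2 \to \dots \to i_q \to j_q \to i_1$. The claim follows by comparing the permanent terms attached to $\sigma$ and $\pi$, which have the same modulus $S$. For the signs of the permutations themselves, $\sigma \circ \pi^{-1}$ sends $j_p \mapsto j_{p+1}$ cyclically, i.e.\ it is a single $q$-cycle on $J$, so $\tsign(\sigma)\,\tsign(\pi) = (-1)^{q-1}$ (a relative sign that does not depend on how one orders $I$ and $J$ to apply $\tper$). For the signs of the entries, the $2q$ arcs of the cycle are exactly the arcs $i_p \to j_p$, each pointing from a hyperplane node to a coordinate node, hence $\ep_{i_p j_p}$ is tropically negative, and the arcs $j_p \to i_{p+1}$, each pointing from a coordinate node to a hyperplane node, hence $\ep_{i_{p+1} j_p}$ is tropically positive; therefore $\prod_p \sign(\ep_{i_p j_p}) = (-1)^q$ while $\prod_p \sign(\ep_{i_p j_{p-1}}) = 1$. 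Multiplying, $\tsign(\sigma)\,\tsign(\pi) \prod_p \sign(\ep_{i_p \sigma(i_p)}) \prod_p \sign(\ep_{i_p \pi(i_p)}) = (-1)^{q-1}(-1)^q = -1$, so the two terms of $\tper(|\epm'|)$ indexed by $\sigma$ and $\pi$ have opposite tropical signs; they are also nonzero, all their factors having finite modulus. By definition $\epm'$ is then tropically sign singular.

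The bookkeeping in the last paragraph is where I expect the only real care to be needed: one has to pin down the convention for the sign of a bijection between the index sets $I$ and $J$ (or, more safely, work only with the relative sign of the two terms, which is well defined), and one has to check that $\sigma \circ \pi^{-1}$ is genuinely one $q$-cycle rather than a product of shorter cycles, which is exactly what being a single cycle through all of $I \cup J$ guarantees. Everything else is an immediate consequence of the definition of the tangent (di)graph and of Lemma~\ref{lemma:matching_in_graph_is_max_permutation}.
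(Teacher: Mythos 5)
Your proposal is correct and follows essentially the same route as the paper: extract the two matchings $\sigma$ and $\pi$ from the (simple, even-length) cycle, use Lemma~\ref{lemma:matching_in_graph_is_max_permutation} to see both are maximizing with finite permanent, and in the directed case compare the entry signs ($(-1)^q$) with the relative permutation sign of the $q$-cycle ($(-1)^{q-1}$) to conclude the two terms have opposite tropical signs. The only cosmetic differences are that you re-derive the equality of the two matching weights by summing the tangent-graph equalities (rather than invoking the lemma twice) and that you start the sign bookkeeping at a hyperplane node, which merely swaps which matching carries the positive entries.
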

\begin{proof}
   To prove the first statement, let $j_1,i_1,j_2, \dots, i_q, j_{q+1} = j_1$ be an undirected cycle in $\graph_x$. By Assumption \ref{assumption_B} we have $q \geq 2$.  Up to restricting to a subcycle, we may assume that the  cycle is simple, \ie\ the indices $i_1, \dots, i_q$ and $j_1, \dots, j_q$ are pair-wise distinct.  As a consequence,  the maps $\sigma: i_p \mapsto j_p$ and $ \tau: i_p \mapsto j_{p+1}$ for $p\in[q]$ are bijections. The sets of edges $\{(i_p, j_p) \mid p \in [q] \} $ and $\{(i_p, j_{p+1}) \mid p \in [q] \} $ are two distinct matchings between the hyperplane nodes $i_1, \dots, i_p$ and the coordinate nodes $j_1, \dots,  j_p$. Let $\epm'$ be the submatrix of $\epm$ formed from rows $i_1, \dots, i_q$ and columns $j_1, \dots, j_q$.  By Lemma~\ref{lemma:matching_in_graph_is_max_permutation},  
the bijections $\sigma$ and $ \tau$  are both  maximizing in $\tper(|\epm'|)$. Lemma~\ref{lemma:matching_in_graph_is_max_permutation} also shows that  $\tper(|\epm'|) > \zero$.

Now suppose that the cycle is directed. Then, $\ep_{i_p j_p}$ is tropically positive and $\ep_{i_p j_{p+1}}$ is tropically negative for all $p \in [q]$. Consequently, the tropical signs of $\ep_{i_1 j_1} \stimes \cdots \stimes \ep_{i_q j_q}$ and $\ep_{i_1 j_2} \stimes \cdots \stimes \ep_{i_q j_{q+1}}$ differ by $(-1)^q$.  Moreover, $\tau$ is obtained from $\sigma$ by a cyclic permutation of  order $q$, so their signs differs by $(-1)^{q+1}$. As a result, the terms $\tsign(\sigma)\stimes \ep_{i_1 j_1} \stimes  \cdots \stimes \ep_{i_q j_{q}}$ and $\tsign(\tau) \stimes \ep_{i_1 j_2} \stimes \cdots \stimes \ep_{i_q j_{q+1}}$ have opposite tropical signs. This completes the proof.
\end{proof}

\subsection{Cells of  an arrangement of signed tropical hyperplanes}

Our next result shows how the tangent digraph can be used to get sufficient control on the lift of the points in a tropical polyhedron to
points in some Puiseux polyhedron, while dealing mostly with inequality descriptions.  Throughout this section we assume that the extended
matrix $(A \ b)$ is tropically sign generic.

A tropical polyhedron $\tropP(A,b)$ is always the image under the valuation map of a polyhedron over Puiseux series. Indeed,  consider an internal representation $\tconv(V) \tplus \tcone(R)$ of $\tropP(A,b)$ (which exists by  Theorem~\ref{thm:trop_minkowski_weyl}).  
The result of Develin \& Yu, \cite[Proposition~2.1]{DevelinYu07}, implies that any lift $\bm V, \bm R$ of the sets $V, R$ provides a Puiseux polyhedron $\puiseuxP = \conv(\bm V) + \cone(\bm R)$ such that $\val(\puiseuxP )= \tropP(A,b)$.

One can also lift a tropical polyhedron through its inequality representation. For example for any $A \in \strop^{m \times n}$ and $b \in \strop^m$ satisfying Assumption~\ref{assumption_C}, Proposition~\ref{prop:puiseux_solves_tropical_program} provides Puiseux matrices $\A \in \sval^{-1}(A)$ and $\b \in \sval^{-1}(b)$ such that $\val(\puiseuxP(\A,\b)) = \tropP(A,b)$. However, the latter equality may fail for arbitrary lifts  $\A \in \sval^{-1}(A)$ and $\b \in \sval^{-1}(b)$.

\begin{example}
  Consider the tropical polyhedron
  \begin{equation}\label{eq:trop_poly_val_fail}
  \tropP=  \{ x \in \trop^2 \mid  \max (0, x_2) \geq x_1, \;  \max (0, x_1) \geq x_2 , \; x_1 \geq \zero, \; x_2 \geq \zero\} \ .
  \end{equation}
One possible lift of this representation in terms of inequalities is the Puiseux polyhedron
\begin{equation}\label{eq:puiseux_poly_val_fail}
  \puiseuxP=  \{ \x \in \puiseux^2 \mid  2 + \x_2 \geq 2 \x_1, \;  2 + \x_1  \geq 2 \x_2, \; \x_1 \geq 0, \; \x_2 \geq 0\} \ .
\end{equation}
Since $\puiseuxP$ is contained in the non-negative orthant, we have $\val(\puiseuxP) \subseteq \tropP$.  Here this inclusion is strict. The
set $\val(\puiseuxP)$ consists of all the non-positive points in $x \in \trop^2$ with $x_1 \leq 0$ and $x_2 \leq 0$, while $\tropP$
additionally contains the half-line $\{ (\lambda, \lambda) \mid \lambda > 0 \}$.  To show this, suppose that there exists $(\x_1, \x_2) \in
\puiseuxP$ such that $\val(\x_1) = \val(\x_2) = \lambda > 0$. Let $u_1 t^\lambda$ and $u_2 t^\lambda$ be the leading terms of $\x_1$ and
$\x_2$ respectively. Then the inequality $2 + \x_1 \geq 2 \x_2$ implies that $u_1 \geq 2 u_2$, while $2 + \x_2 \geq 2 \x_1$ imposes that
$u_2 \geq 2 u_1$, and we obtain a contradiction as $u_1,u_2>0$.  See Figure~\ref{fig:val_fail}.
\end{example}

\begin{figure}
\begin{tikzpicture}
      \colorlet{myblue}{rgb:red,0;green,102;blue,102}
      \colorlet{mygreen}{rgb:red,10;green,50;blue,10}

\begin{scope}[shift={(0,1.5)}]
  \fill[lightgray] (-0.2,-0.2) -- (-0.2,1) -- (1,1) -- (1,-0.2) -- cycle;

  \draw[myblue] (-0.2,1) -- (1,1) -- (2,2);
  \fill[pattern color =myblue, pattern = north west lines] 
  (-0.2,1) -- (1,1) -- (2,2) -- (2, 1.8) -- (1,0.8) -- (-0.2,0.8) -- cycle;

  \draw[mygreen] (1,-0.2) -- (1,1) -- (2,2);
  \fill[pattern color =mygreen, pattern = north west lines] 
  (1,-0.2) -- (1,1) -- (2,2) -- (2 , 2.2) -- (0.8, 1) -- (0.8, -0.2) -- cycle;

  \draw[gray,->] (-0.2,0) -- (2,0);
  \draw[gray,->] (0,-0.2) -- (0,2);
  \draw (1,0.1) -- (1,-0.1);
  \node[anchor = north] at (1,-0.1) {$0$};
  \node[anchor = north] at (2,0) {$x_1$};
  \draw (0.1,1) -- (-0.1,1);
  \node[anchor = east] at (-0.1,1) {$0$};
  \node[anchor = east] at (0,2) {$x_2$};

\end{scope}

  \begin{scope}[shift={(5,1.5)}]

  \fill[lightgray] (0,0) -- (0,1) -- (2,2) -- (1,0) -- cycle;

  \draw[myblue] (-0.2,0.9) -- (2.6,2.3);
  \fill[pattern color =myblue, pattern = north west lines] 
  (-0.2,0.9) -- (2.6,2.3)  -- (2.6, 2.2) -- (-0.2,0.8)  -- cycle;

  \draw[mygreen] (0.9,-0.2) -- (2.3,2.6);
  \fill[pattern color =mygreen, pattern = north west lines] 
  (0.9, -0.2) -- (2.3, 2.6) -- (2.3, 2.7) -- (0.8,-0.2) -- cycle;

  \draw[gray,->] (-0.2,0) -- (2.6, 0);
  \draw[gray,->] (0,-0.2) -- (0, 2.6);
  \draw (1,0.1) -- (1,-0.1);
  \node[anchor = north] at (1,-0.1) {$t^0$};
  \node[anchor = north] at (2.6,0) {$\x_1$};
  \draw (0.1,1) -- (-0.1,1);
  \node[anchor = east] at (-0.1,1) {$t^0$};
  \node[anchor = east] at (0,2.6) {$\x_2$};  

  \coordinate (puiseux_center) at (1,1);
  
\begin{scope}[shift={(5,0)}]

    \fill[lightgray] (-0.2,-0.2) -- (-0.2,1) --(1,1) -- (1,-0.2) -- cycle;
  \draw (-0.2,1) -- (1,1) -- (1,-0.2);

  \draw[gray,->] (-0.2,0) -- (2,0);
  \draw[gray,->] (0,-0.2) -- (0,2);
  \draw (1,0.1) -- (1,-0.1);
  \node[anchor = north] at (1,-0.1) {$0$};
  \node[anchor = north] at (2,0) {$x_1$};
  \draw (0.1,1) -- (-0.1,1);
  \node[anchor = east] at (-0.1,1) {$0$};
  \node[anchor = east] at (0,2) {$x_2$};

  \coordinate (val_center) at (0.5,0.5);
\end{scope}

\end{scope}

\end{tikzpicture}
\caption{Left: the tropical polyhedron $\tropP$ described in~\eqref{eq:trop_poly_val_fail}; middle: the Puiseux polyhedron $\puiseuxP$ obtained by lifting the inequality representation of $\tropP$ as in~\eqref{eq:puiseux_poly_val_fail}; right: the set $\val(\puiseuxP)$, which is stricly contained in $\tropP$.}
\label{fig:val_fail}
\end{figure}

\begin{theorem}\label{thm:val_and_inter_commute_for_generic_matrices}
Suppose that $(A \  b)$ is tropically sign generic. Then the identity 
\[\val \Bigl( \puiseuxP(\A,\b) \cap \puiseux^n_+ \Bigr) \ = \ \tropP(A,b) \]
 holds for any $\A \in \sval^{-1} (A)$ and $\b \in  \sval^{-1} (b)$.
\end{theorem}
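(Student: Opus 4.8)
The plan is to prove the two inclusions separately. The inclusion $\val(\puiseuxP(\A,\b)\cap\puiseux_+^n)\subseteq\tropP(A,b)$ holds for arbitrary lifts and is exactly the argument used in the proof of Proposition~\ref{prop:puiseux_solves_tropical_program}: if $\x\in\puiseuxP(\A,\b)$ has non-negative coordinates, then for every $i$ the inequality $\ppr_i^+\x+\b_i^+\ge\ppr_i^-\x+\b_i^-$ compares two non-negative Puiseux series, and since $\val$ is order preserving on $\puiseux_+$ while the valuation of a sum of non-negative series equals the tropical sum of the valuations (no leading-term cancellation), applying $\val$ yields $a_i^+\tdot x\tplus b_i^+\ge a_i^-\tdot x\tplus b_i^-$ for $x=\val(\x)$, i.e.\ $x\in\tropP(A,b)$.

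For the reverse inclusion, fix $x\in\tropP(A,b)$; the task is to produce $\x\in\puiseuxP(\A,\b)$ with $\x>0$ and $\val(\x)=x$. One first reduces to the case $x\in\R^n$: a coordinate $x_j=\zero$ forces $\x_j=0$ in any admissible lift, which merely deletes column $j$ of $(A\ b)$ (preserving sign-genericity) and turns every inequality all of whose nonzero coefficients sit in column $j$ into a trivial inequality over $\puiseux$, so such coordinates can be discarded and one inducts. Assuming $x\in\R^n$, I search for a one-term lift $\x_j=u_j\,t^{-x_j}$, $j\in[n]$, with $u_j>0$; this automatically gives $\x>0$ and $\val(\x)=x$. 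For an inequality $i$ that is slack at $x$, the $+$ side $\ppr_i^+\x+\b_i^+$ has strictly larger valuation than the $-$ side (because $x\in\tropP(A,b)$), so $\ppr_i\x+\b_i$ inherits its positive leading coefficient and is $>0$ whatever $u$ is. For an inequality $i$ that is tight at $x$, both sides have the same valuation $v_i$, and the coefficient of $t^{-v_i}$ in $\ppr_i\x+\b_i$ equals $\sum_j\lc(\ppr_{ij})\,u_j$, the sum being over the coordinate nodes $j$ adjacent to the hyperplane node $i$ in the tangent digraph $\digraph_{(x,\unit)}(\epm)$ of the homogenized system $\epm=(A\ b)$ (column $n+1$ being $\b$, and $u_{n+1}$ the leading coefficient of the lift of the affine component); moreover $\lc(\ppr_{ij})>0$ precisely when the arc runs $j\to i$ and $<0$ when it runs $i\to j$. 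Writing $M$ for the real matrix carrying these entries on the edges of $\digraph_{(x,\unit)}(\epm)$ and $0$ elsewhere, it suffices to find $u\gg0$ with $Mu\gg0$; since this system is homogeneous, any solution may afterwards be rescaled to have $u_{n+1}=1$, in accordance with lifting the affine component $\unit$ to $t^{0}=1$.

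It remains to solve $u\gg0,\ Mu\gg0$, and here sign-genericity is used. By Lemma~\ref{lemma:directed_cycle_sign_singularity}, sign-genericity of $(A\ b)$ forbids directed cycles in $\digraph_{(x,\unit)}(\epm)$. By a theorem of the alternative (Gordan), the strict system $u\gg0,\ Mu\gg0$ is solvable unless there is a non-zero $y\ge0$, indexed by the hyperplane nodes, with $M^\top y\le0$, i.e.\ $\sum_{i:\,j\to i}\lc(\ppr_{ij})y_i\le\sum_{i:\,i\to j}|\lc(\ppr_{ij})|\,y_i$ for every coordinate node $j$. I rule out such a $y$: if $y\ne0$, choose a hyperplane node $i_1$ with $y_{i_1}>0$; since the negative side of a tight inequality with value $>\zero$ is non-trivial, $i_1$ has an out-arc $i_1\to j_1$, and then the inequality above at $j_1$ — whose right-hand side is at least $|\lc(\ppr_{i_1j_1})|\,y_{i_1}>0$ — forces an in-arc $j_1\to i_2$ with $y_{i_2}>0$; iterating yields an infinite directed walk through nodes in $\supp(y)$, hence a directed cycle, contradicting the Lemma. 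Thus $u$ exists, $\x_j=u_j t^{-x_j}$ lies in $\puiseuxP(\A,\b)\cap\puiseux_+^n$, and $\val(\x)=x$, completing the proof. The main obstacle is this last step — converting ``no directed cycle in the tangent digraph'' into solvability of the strict sign-pattern system $u\gg0,\ Mu\gg0$, via a Farkas-type alternative combined with the walk argument; the reduction to finite coordinates and the matching of arc orientations with signs of leading coefficients are routine but require care.
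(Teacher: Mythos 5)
Your strategy is essentially the paper's: reduce to a point with finite coordinates, look for a monomial lift $\x_j=u_j t^{-x_j}$ with $u_j>0$, note that slack constraints hold for any such lift, and observe that each tight constraint asks for positivity of a real linear form whose coefficients are the leading coefficients $\lc(\pep_{ij})$ supported on the arcs of the tangent digraph; then combine a Farkas/Gordan alternative with the acyclicity of the tangent digraph furnished by Lemma~\ref{lemma:directed_cycle_sign_singularity}. Your handling of the affine column via a free coefficient $u_{n+1}$ that is rescaled to $1$ is a harmless variant of the paper's homogenize-then-dehomogenize step, and the identification of the $t^{-v_i}$ coefficient with $\sum_j \lc(\pep_{ij})u_j$ over $j\in\argmax(|\epr_i|\tdot x)$ is correct.

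The gap is in the last step, where the certificate $y\ge 0$, $y\ne 0$, $M^{\top}y\le 0$ is converted into a directed cycle. Since $M_{ij}=\lc(\pep_{ij})>0$ exactly when the arc runs $j\to i$ and $M_{ij}<0$ when it runs $i\to j$, the $j$th inequality of $M^{\top}y\le 0$ says that the sum over the \emph{outgoing} arcs of $j$ (weighted by $y$) is at most the sum over the \emph{incoming} arcs of $j$. You start at $i_1\in\supp(y)$, follow an out-arc $i_1\to j_1$, note that the right-hand side at $j_1$ is positive, and conclude that the left-hand side must contain a positive term, i.e.\ that there is an arc $j_1\to i_2$ with $y_{i_2}>0$. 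That implication runs against the inequality: knowing the larger side is positive says nothing about the smaller side (for instance $j_1$ could have no outgoing arcs at all, and the inequality at $j_1$ would still hold), so the forward walk $i_1\to j_1\to i_2\to\cdots$ is not forced to continue. The certificate must be exploited in the opposite direction, as in the paper's proof: every hyperplane node $i\in\supp(y)$ has an \emph{incoming} arc $j\to i$ because $\argmax(\epr_i^{+}\tdot x)\neq\emptyset$; this makes the outgoing-arc side of the inequality at $j$ positive, which then forces an incoming arc $i'\to j$ with $y_{i'}>0$; iterating along incoming arcs through these nodes produces a directed cycle, contradicting sign genericity. With the walk reversed in this way your argument closes; as written, the cycle-extraction step does not follow.
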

\begin{proof} 
  Let $\epm = (A \ b) $. For any $\A \in \sval^{-1} (A)$ and $\b \in \sval^{-1} (b)$, let $\pepm = (\A \ \b) $. We first prove the result for the
  cones $\tropC = \tropP(\epm, \zero)$ and $\puiseuxC = \puiseuxP(\pepm, 0)$. The inclusion $\val(\puiseuxC \cap \puiseux^{n+1}_+) \subset \tropC$ is
  trivial. Conversely, let $x \in \tropC$. Up to removing the columns $j$ of $\epm$ with $x_j = \zero$, we can assume that $x$ has no $\zero$ entries.
  We construct a lift $\x$ of $x$ in the Puiseux cone $\puiseuxC \cap \puiseux^{n+1}_+$ using the tangent digraph $\digraph_x$ with hyperplane node
  set $I$. We claim that it is sufficient to find a vector $v \in \R^{n+1}$ satisfying the following conditions:
  \begin{align}
    \sum_{ j \in \argmax (|\epr_{i}| \tdot x) } \lc(\pep_{ij})  v_j & > 0 \quad \text{for all} \ i \in I \ , \label{eq:val_and_inter_commute_for_generic_matrices1} \\
    v_j & > 0 \quad \text{for all}\  j \in [n+1] \ , \label{eq:val_and_inter_commute_for_generic_matrices2}
  \end{align}
  where $\pepm = (\pep_{ij})$.

Indeed, given such a vector $v$, consider the lift $\x = (v_j t^{-x_j})_j$ of $x$. Clearly $\x \in  \puiseux^{n+1}_+$. If $i \in I$,  then~\eqref{eq:val_and_inter_commute_for_generic_matrices1} ensures that the leading coefficient of $\pepr_i \x$ is  positive.  If $i \not \in I$, two cases can occur. Either $\epr^+_i \tdot x = \epr^-_i \tdot x = \zero$ and thus  $\pepr_i \x = 0$. Otherwise, $\epr^+_i \tdot x > \epr^-_i \tdot x $, so the leading term of $\pepr_i \x $ is positive.  We  conclude that $\pepr_i \x \geq 0$ for all $i \in [m]$. This proves the claim.

  Let $F = (f_{ij}) \in \R^{I \times (n+1)}$ be the real matrix defined by $f_{ij} = \lc(\pep_{ij})$ when $j \in \argmax(|\epr_i| \tdot  x)$ and $f_{ij}=0$ otherwise. We claim that there exists $v \in \R^{n+1}$ such that $F v > 0$ and $v > 0$, or,  equivalently, that the following polyhedron is not empty:
  \begin{equation*}
    \{ v \in \R^{n+1} \mid F v \geq 1, \ v \geq 1 \} \ .
  \end{equation*}
By contradiction, suppose that the latter polyhedron is empty. Then, by Farkas' lemma \cite[\S5.4]{Schrijver03:CO_A},  there exists $\alpha \in  \R^{I}_+$ and $\lambda \in \R^{n+1}_+$ such that:
  \begin{align}
    \transpose{F} \alpha + \lambda & \leq 0 \label{eq:coeff_empty_farkas1}\\
    \sum_{i \in I} \alpha_i +\sum_{j \in [n+1]} \lambda_j & > 0\label{eq:coeff_empty_farkas5}
  \end{align}
  Note that if $\alpha $ is the $0$ vector, then by~\eqref{eq:coeff_empty_farkas5}, there exists a $\lambda_j > 0$ for some $j \in [n+1]$, which contradicts~\eqref{eq:coeff_empty_farkas1}. Thus, the set $K = \{ i \in I \mid \alpha_i > 0 \}$ is not  empty. Let $J \subset [n+1]$ be defined by:
  \begin{equation*}
    J := \bigcup_{i \in K} \argmax(\epr^+_i \tdot x) = \bigcup_{i \in K} \{ j  \mid f_{ij} > 0 \} \ .
  \end{equation*}
  By definition of the tangent digraph, every hyperplane node in $K$ has an incoming arc from a coordinate node in
  $J$. Moreover, for every $j \in J$, the inequality~\eqref{eq:coeff_empty_farkas1} yields:
  \[
  \sum_{i \in I} f_{ij} \alpha_i \leq 0 \ . 
  \]
  This sum contains a positive term $f_{ij} \alpha_i$ (by definition of $J$). Consequently, it must also contain a  negative term $f_{kj} \alpha_k$. Equivalently, $k \in K$ and $f_{kj} < 0$, which means that the coordinate node $j$  has an incoming arc from the hyperplane node $k$. It follows that the tangent digraph $\digraph_x$ contains a  directed cycle (through nodes  $K \cup J$). Then, by Lemma~\ref{lemma:directed_cycle_sign_singularity}, the matrix $\epm$  contains a tropically sign singular submatrix with $\tper(|\epm|) > \zero$. This proves the claim.

  Now we consider the polyhedron $\tropP(A,b)$.  The inclusion $\val(\puiseuxP(\A,\b) \cap \puiseux^n_+) \subset \tropP(A,b)$ is still  valid. Conversely, given $x \in \tropP(A,b)$, the point $x' = (x,\unit) \in \trop^{n+1}$ belongs to the cone  $\tropC$. By the previous proof, there exists a lift $\x'$ of $x'$ in $\puiseuxC \cap \puiseux^{n+1}_+$. Since $\val(\x'_{n+1}) = \unit$,  the point $\x = (\x'_1/ \x'_{n+1},\dots,\x'_n / \x'_{n+1})$ is well-defined.  Furthermore, $\x$ clearly satisfies  $\val(\x) = x$ and it belongs to $\puiseuxP(\A,\b) \cap \puiseux^n_+$.
\end{proof}

Theorem~\ref{thm:val_and_inter_commute_for_generic_matrices} shows that valuation commutes with intersection for half-spaces in general position. This is still true for mixed intersection of half-spaces and (s-)hyperplanes. Similar to our notation for Puiseux polyhedra, we let 
\[ \tropP_I(A,b) :=\bigcap_{i \in I} \tropH(\pr_i, b_i) \cap \tropP(A,b)
\enspace .
\]

\begin{corollary}\label{coro:val_and_inter_commute_for_generic_matrices}  
Suppose that $(A \ b)$ is tropically sign generic. 
Then, for all $\A \in \sval^{-1}(A)$, $\b \in \sval^{-1}(b)$ and $\basis \subset [m]$,
\begin{equation}
  \val\Bigl(  \puiseuxP_\basis(\A,\b) \cap \puiseux^n_+ \Bigr)  \ = \  \tropP_\basis(A,b) \,.
  \label{eq:tropical_faces}
\end{equation}
\end{corollary}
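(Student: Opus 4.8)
The plan is to prove the two inclusions of \eqref{eq:tropical_faces} separately; the inclusion $\supseteq$ is elementary, and the inclusion $\subseteq$ is where the work lies. For $\supseteq$, take $\x \in \puiseuxP_\basis(\A,\b) \cap \puiseux^n_+$. Since $\puiseuxP_\basis(\A,\b) \subseteq \puiseuxP(\A,\b)$, the elementary, lift-independent inclusion in Theorem~\ref{thm:val_and_inter_commute_for_generic_matrices} already gives $\val(\x) \in \tropP(A,b)$. For $i \in \basis$ the equality $\ppr_i \x + \b_i = 0$ reads $\ppr_i^+ \x + \b_i^+ = \ppr_i^- \x + \b_i^-$; both sides are sums of non-negative Puiseux series because $\x \ge 0$, so no cancellation occurs and the valuation distributes over each sum. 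Passing to valuations yields $A_i^+ \tdot \val(\x) \tplus b_i^+ = A_i^- \tdot \val(\x) \tplus b_i^-$, that is $\val(\x) \in \tropH(\pr_i, b_i)$. Hence $\val(\x) \in \tropP_\basis(A,b)$.

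For $\subseteq$, fix $x \in \tropP_\basis(A,b)$ and construct a lift. Exactly as in the proof of Theorem~\ref{thm:val_and_inter_commute_for_generic_matrices}, I pass to the homogeneous cone and discard the coordinates on which $x$ vanishes, so that $x$ has finite entries; let $\digraph_x$ be the tangent digraph of $\epm = (A\ b)$ at $x$, with hyperplane-node set $I$, and note $\basis \subseteq I$ since $x$ is tight for each $i \in \basis$ (we may also assume $|\basis|$ does not exceed the number of remaining coordinates, the opposite case being degenerate, with both sides empty by a rank argument). I seek the lift in the form $\x = (v_j t^{-x_j})_j + \bm\delta$: the monomial part, with $v$ a positive real vector, ensures $\val(\x) = x$ and $\x > 0$, while $\bm\delta$, a Puiseux correction with $\val(\bm\delta_j) < x_j$ for every $j$, is used to make the constraints indexed by $\basis$ tight. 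The vector $v$ is chosen by imitating the Farkas argument of Theorem~\ref{thm:val_and_inter_commute_for_generic_matrices}, but now demanding that the leading-coefficient relations on the rows of $\basis$ be \emph{equalities}: I claim there is $v > 0$ with $F_i v = 0$ for $i \in \basis$ and $F_i v > 0$ for $i \in I \setminus \basis$, where $F$ is the matrix of leading coefficients supported on the argmax positions from that proof. If not, Farkas' lemma for the mixed equality/inequality system produces free multipliers on the rows of $\basis$, non-negative ones on the rows of $I\setminus\basis$ and on the positivity constraints, and — just as in Theorem~\ref{thm:val_and_inter_commute_for_generic_matrices} — a directed cycle supported on those rows. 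The key point is that a free multiplier carries a single sign, so each row of $\basis$ is used in only one of its two possible orientations; reversing the sign of the rows used ``backwards'' turns the cycle into a directed cycle in the tangent digraph of a matrix obtained from $(A\ b)$ by negating some rows, and this matrix has pairwise distinct rows. Lemma~\ref{lemma:directed_cycle_sign_singularity} then furnishes a tropically sign-singular square submatrix with positive permanent; since tropical sign singularity is unaffected by negating rows, the corresponding submatrix of $(A\ b)$ is tropically sign singular with positive permanent, contradicting sign genericity.

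With $v$ in hand, set $\x_0 = (v_j t^{-x_j})_j$. For $i \in \basis$ the leading coefficient of the slack $\pepr_i \x_0$ equals $F_i v = 0$, so $\val(\pepr_i \x_0) < |\pr_i| \tdot x$. Sign genericity makes the submatrix $\A_\basis$ of full row rank and, more precisely, tropically well-conditioned — a manifestation of the signed tropical Cramer rule, and the second place where sign genericity enters. A routine successive-approximation argument then provides $\bm\delta$ with $\pepr_i(\x_0 + \bm\delta) = 0$ for all $i \in \basis$ and $\val(\bm\delta_j) < x_j$ for all $j$. Then $\x := \x_0 + \bm\delta$ satisfies $\val(\x) = x$, $\x > 0$, and the constraints of $\basis$ are tight; for $i \notin \basis$ one has $\pepr_i \x \ge 0$ because either $\epr_i^+ \tdot x > \epr_i^- \tdot x$, in which case the leading term of $\pepr_i \x_0$ is already robustly positive, or $i \in I \setminus \basis$ and $F_i v > 0$ makes it positive; the lower-order correction $\bm\delta$ affects neither conclusion. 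Undoing the homogenization and reinstating the discarded coordinates yields the desired lift in $\puiseuxP_\basis(\A,\b) \cap \puiseux^n_+$.

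The hard part will be twofold. First, when extracting the directed cycle one must make sure it contradicts sign genericity of the \emph{original} matrix $(A\ b)$, and not of the ``doubled'' matrix that would result from naively rewriting each equality constraint as a pair of inequalities (that doubled matrix is never sign generic, because it has rows with equal modulus); this is exactly what the single-sign property of free Farkas multipliers secures. Second, the correction $\bm\delta$ must be strictly subordinate to $x$ so as to disturb neither the valuation of $\x$ nor the feasibility of the inequality constraints, and controlling its valuation is precisely where the tropical conditioning of $\A_\basis$, via the signed tropical Cramer rule, is required.
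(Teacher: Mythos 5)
Your argument for $\supseteq$ is fine, and your modified Farkas argument for finding a leading-coefficient vector $v$ (flipping signs of rows of $\basis$ whose multipliers come out negative, then invoking Lemma~\ref{lemma:directed_cycle_sign_singularity} and noting sign genericity is invariant under row negation) is a valid and interesting variation on the proof of Theorem~\ref{thm:val_and_inter_commute_for_generic_matrices}. But the overall strategy diverges from the paper's and runs into two genuine gaps that you have not closed.

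First, the correction term $\bm\delta$. Having $F_i v = 0$ for $i\in\basis$ only guarantees that the leading coefficients of $\pepr_i \x_0$ cancel; it does not make $\pepr_i\x_0 = 0$, and this is the whole difficulty. Your correction requires solving $\pepm_\basis\,\bm\delta = -\pepm_\basis\x_0$ with $\val(\bm\delta_j) < x_j$ entrywise. You invoke ``the signed tropical Cramer rule'' and ``a routine successive-approximation argument'', but the Cramer theorem (Theorem~\ref{thm:tropical_cramer}) applies to square systems, and $\pepm_\basis$ is in general rectangular, and moreover the usual route to a well-conditioned square submatrix would be to find a matching of $\basis$ into the coordinate nodes of the tangent graph. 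Under full tropical genericity (Assumption~\ref{ass:general_position}) the tangent graph is a forest and such a matching exists, but the Corollary assumes only sign genericity, under which the tangent graph may contain (undirected, non-directed) cycles; Hall's condition can then fail and no such matching need exist. At that point the existence of any $\bm\delta$ of the required small valuation is not established. The right observation is that $\bm\delta = -\x_0$ is always a solution but has the wrong valuation, so what you really need is a kernel vector of $\pepm_\basis$ close to $\x_0$ in valuation, and you have no argument for that.

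Second, your aside about $|\basis|$ exceeding the number of remaining coordinates. You claim both sides of \eqref{eq:tropical_faces} are then empty ``by a rank argument'', but under sign genericity alone (without the stronger genericity Assumption~\ref{ass:general_position}) this is not obvious; indeed it is a consequence of the Corollary itself. Appealing to it at the outset of the proof is circular: the paper's proof does handle $\basis=[m]$ with $m$ arbitrary, and it does so without any restriction on $|\basis|$.

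For contrast, the paper's proof of the $\subseteq$ inclusion is non-constructive and entirely avoids low-order corrections. It observes that sign genericity is preserved under independently negating any subset of rows of $(A\ b)$ and that $x$ remains feasible for the row-negated systems; Theorem~\ref{thm:val_and_inter_commute_for_generic_matrices} then provides, for each sign pattern on the rows of $\basis$, a non-negative lift of $x$ on the prescribed side of each hyperplane. An induction on $|\basis|$ combined with convexity shows the convex hull of these lifts meets $\bigcap_{i\in\basis}\puiseuxH(\ppr_i,\b_i)$, which is exactly the required lift. This works uniformly for any $\basis\subset[m]$, requires no square-submatrix or conditioning analysis, and needs no correction term. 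You should adopt that argument, or, if you want to keep a constructive approach, you must supply a genuine proof of the existence of $\bm\delta$ (equivalently, of a kernel vector of $\pepm_\basis$ approximating $\x_0$ to the required order) using only sign genericity.
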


\begin{proof}
  We first prove the result when $\basis = [m]$.  In this case, the claim is about the intersection of all (Puiseux or
  signed tropical) hyperplanes in the arrangement.
  The first inclusion $\val \bigl( \bigcap_{i = 1}^m \puiseuxH(\ppr_i, \b_i) \cap \puiseux^n_+ \bigr) \subset \bigcap_{i=1}^m \tropH(\pr_i, b_i)$ is
  trivial. Conversely, let $x \in \bigcap_{i=1}^m \tropH(\pr_i, b_i)$.  Note that there is nothing to prove if that intersection is empty. The point
  $x$ belongs to the tropical polyhedron $\tropP(A,b)$. By Theorem~\ref{thm:val_and_inter_commute_for_generic_matrices}, $x$ admits a lift in the
  Puiseux polyhedron $\puiseuxP(\A,\b) \cap \puiseux^n_+$. But observe that the choice of tropical signs for the rows of $(A \ b)$ is
  arbitrary. Indeed, if $(A' \ b')$ is obtained by multiplying some rows of $(A\ b)$ by $\tminus \unit$, then $(A' \ b')$ satisfies the conditions of
  Theorem~\ref{thm:val_and_inter_commute_for_generic_matrices} and $x$ belongs to $\tropP(A',b')$. Thus, for any sign pattern $s \in \{-1, +1\}^m$,
  there exists a lift $\x^s$ of $x$ which belongs to the Puiseux polyhedron $\puiseuxP(\A^s, \b^s) \cap \puiseux^n_+$, where $(\A^s \ \b^s) =
  \Bigl(\begin{smallmatrix} s_1 & & \\[-1.5ex] & \ddots & \\[-0.5ex] & & s_m\end{smallmatrix}\Bigr) (\A \ \b)$.

  Since the Puiseux points $\x^s$ are non-negative with valuation $x$, any point in their convex hull is also non-negative with valuation $x$. We claim that the convex hull $\conv \{ \x^s \mid s \in \{-1, +1\}^m \}$ contains a point in the intersection $\bigcap_{i =1}^m \puiseuxH(\ppr_i, \b_i)$. We prove the claim by induction on the number $m$ of  hyperplanes.

  If $m = 1$, we obtain two points $\x^+$ and $\x^-$ on each side of the hyperplane $\puiseuxH(\ppr_1,\b_1)$, and it is  easy to see that their convex hull intersects the hyperplane.
  Now, suppose we have $m \geq 2$ hyperplanes. Let $S^+$ (resp.\ $S^-$) be the set of all signs patterns $s \in \{-1, +1  \}^m$ with $s_m =+1$ (resp.\ $s_m=-1$). By induction, the convex hull $\conv \{\x^s \mid s \in S^+ \}$ contains a point  $\x^+$ in the intersection of the first $m-1$ hyperplanes $\bigcap_{i =1}^{m-1} \puiseuxH(\ppr_i, \b_i)$. Similarly,  $\conv \{\x^s \mid s \in S^-\}$ contains a point $\x^-$ in $\bigcap_{i =1}^{m-1} \puiseuxH(\ppr_i, \b_i)$. The points  $\x^+$ and $\x^-$ are on opposite sides of the last hyperplane $\puiseuxH(\ppr_m, \b_m)$, thus their convex hull  intersects $\puiseuxH(\ppr_m, \b_m)$.

  When $\basis \subsetneq [m]$, the previous proof can be generalized by considering only the sign patterns $s \in \{- 1,
  +1\}^m$ such that $s_i = +1$ for all $i \not \in \basis$.
\end{proof}

By Corollary~\ref{coro:val_and_inter_commute_for_generic_matrices}, 
the intersection of the non-negative orthant $\puiseux^n_+$ with the cells of the arrangement of Puiseux hyperplanes
$\{\puiseuxH(\ppr_i,\b_i) \}_{i \in [m]}$ induces a cellular decomposition of $\trop^n$ into tropical polyhedra.  We call this 
collection of tropical polyhedra the \emph{signed cells} of the arrangement of tropical s-hyperplanes $\{ \tropH(\pr_i,b_i) \}_{i
  \in [m]}$. Notice that the signed cells form an intersection poset thanks to Corollary
\ref{coro:val_and_inter_commute_for_generic_matrices}. 
    
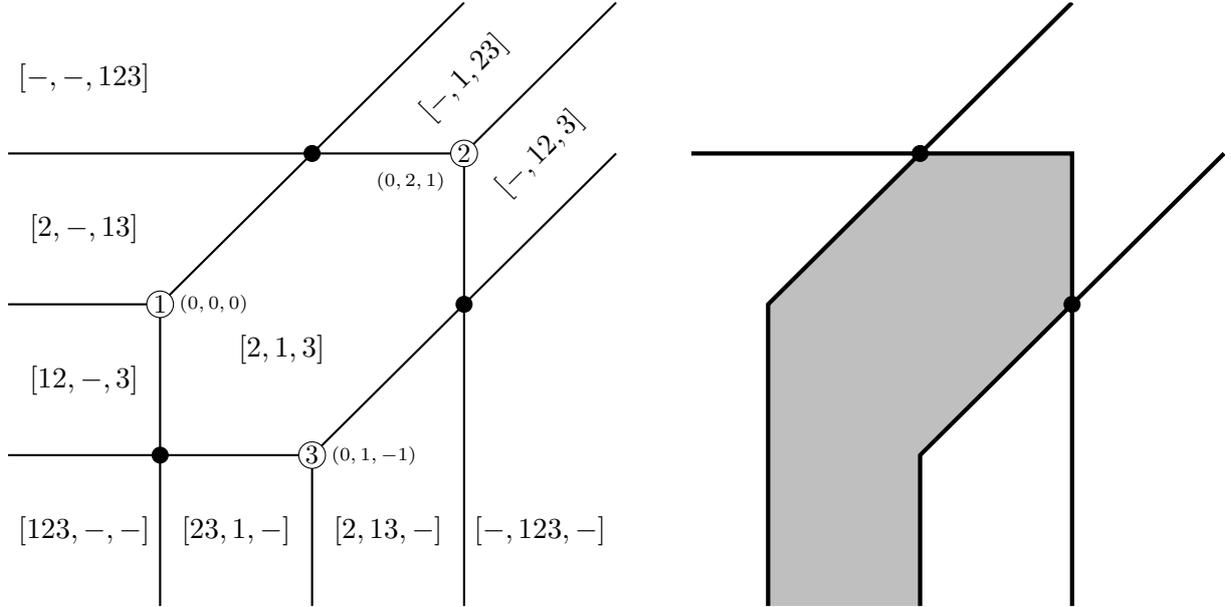
\begin{figure}     

\begin{tikzpicture}[x  = {(2cm,0cm)},
                    y  = {(0cm,2cm)},
                    scale = 1]

\begin{scope}[shift={(0,0)}]

  \definecolor{pointcolor}{rgb}{ 0,0,0 }
  \tikzstyle{linestyle} = [color=black, thick]

  \coordinate (p1) at (0,0);
  \coordinate (p2) at (2,1);
  \coordinate (p3) at (1,-1);

  \coordinate (e11) at (2,2);
  \coordinate (e12) at (-1,0);
  \coordinate (e13) at (0,-2);

  \coordinate (e21) at (3,2);
  \coordinate (e22) at (-1,1);
  \coordinate (e23) at (2,-2);

  \coordinate (e31) at (3,1);
  \coordinate (e32) at (-1,-1);
  \coordinate (e33) at (1,-2);

  \draw[linestyle] (e13) -- (p1) -- (e11);
  \draw[linestyle] (e12) -- (p1);

  \draw[linestyle] (e22) -- (p2) -- (e23);
  \draw[linestyle] (e21) -- (p2);

  \draw[linestyle] (e32) -- (p3) -- (e31);
  \draw[linestyle] (e33) -- (p3);

  \filldraw[draw=pointcolor,fill=white] (p1) circle (5 pt);
  \filldraw[draw=pointcolor,fill=white] (p2) circle (5 pt);
  \filldraw[draw=pointcolor,fill=white] (p3) circle (5 pt);

  \filldraw[pointcolor] (0,-1) circle (3 pt);
  \filldraw[pointcolor] (1,1) circle (3 pt);
  \filldraw[pointcolor] (2,0) circle (3 pt);

  \node at (p1) {\small{$1$}};
  \node at (p2) {\small{$2$}};
  \node at (p3) {\small{$3$}};

  \node at (p1) [inner sep=7pt, right, black] {\tiny{$(0,0,0)$}};
  \node at (p2) [inner sep=7pt, below left, black] {\tiny{$(0,2,1)$}};
  \node at (p3) [inner sep=7pt, right, black] {\tiny{$(0,1,-1)$}};

  \node at (0.8,-0.3) {$[2,1,3]$};
  \node at (-0.5,1.5) {$[-,-,123]$};
  \node at (-0.5,0.5) {$[2,-,13]$};
  \node at (-0.5,-0.5) {$[12,-,3]$};
  \node at (-0.5,-1.5) {$[123,-,-]$};
  \node at (0.5,-1.5) {$[23,1,-]$};
  \node at (1.5,-1.5) {$[2,13,-]$};
  \node at (2.5,-1.5) {$[-,123,-]$};

  \node [rotate=45] at (2,1.5) {$[-,1,23]$};
  \node [rotate=45] at (2.5,1) {$[-,12,3]$};
\end{scope}

\begin{scope}[shift={(4,0)}]
\clip (-0.5,-2) rectangle (3,2);

  \definecolor{pointcolor}{rgb}{ 0,0,0 }
  \tikzstyle{linestyle} = [color=black, ultra thick]

  \coordinate (p1) at (0,0);
  \coordinate (p2) at (2,1);
  \coordinate (p3) at (1,-1);

  \coordinate (e11) at (2,2);
  \coordinate (e12) at (-1,0);
  \coordinate (e13) at (0,-2);

  \coordinate (e21) at (3,2);
  \coordinate (e22) at (-1,1);
  \coordinate (e23) at (2,-2);

  \coordinate (e31) at (3,1);
  \coordinate (e32) at (-1,-1);
  \coordinate (e33) at (1,-2);

  \fill[lightgray] (e33) -- (p3) -- (2,0) -- (p2) -- (1,1) -- (p1) -- (e13) -- cycle;

  \draw[linestyle] (e13) -- (p1) -- (e11);

  \draw[linestyle] (e22) -- (p2) -- (e23);

  \draw[linestyle] (e33) -- (p3) -- (e31);

  \filldraw[pointcolor] (1,1) circle (3 pt);
  \filldraw[pointcolor] (2,0) circle (3 pt);

\end{scope}
\end{tikzpicture}

  \caption{Unsigned (left) and signed (right) cell decompositions induced by the three tropical s-hyperplanes in Example~\ref{ex:cells}.}
  \label{fig:signedcell}
\end{figure}

The signed cell decomposition coarsens the cell decomposition introduced in \cite{develin2004}, which partitions
$\trop^n$ into ordinary polyhedra.  Here we call the latter cells \emph{unsigned}. In particular, the one dimensional signed cells are
unions of (closed)  one-dimensional unsigned cells. However, some  one-dimensional unsigned cells may not belong to any one dimensional signed cell.
 In the example depicted in Figure~\ref{fig:tropical_polytope}, this is the case for the ordinary line
segment $[(1,0,1),(1,1,1)]$. 

\begin{example}\label{ex:cells}
  Consider the tropical polyhedral cone $\tropC$ in $\trop^3$ given by the three homogenous constraints
  \begin{align}
    x_2 &\geq \max(x_1,x_3) \label{eq:signed:1}\\
    x_1 &\geq \max(x_2-2,x_3-1)\\
    \max(x_1,x_3+1) & \geq x_2-1 \enspace .
  \end{align}
  This gives rise to an arrangement of three tropical s-hyperplanes in which $\tropC$ forms one signed cell; see
  Figure~\ref{fig:signedcell} (right) for a visualization in the $x_1=0$ plane.  Each tropical s-hyperplane yields a unique
  unsigned tropical hyperplane.  An \emph{open sector} is one connected component of the complement of an unsigned tropical
  hyperplane.  The ordinary polyhedral complex arising from intersecting the open sectors of an arrangement of unsigned tropical
  hyperplanes is the \emph{type decomposition} of Develin and Sturmfels \cite{develin2004}.  In our example the type decomposition
  has ten unsigned maximal cells; in Figure~\ref{fig:signedcell} (left), we marked them with labels as in \cite{develin2004}.

  The \emph{apices} of the unsigned tropical hyperplanes arising from the three constraints above are $p_1=(0,0,0)$, $p_2=(0,2,1)$
  and $p_3=(0,1,-1)$.  The tropical convex hull of $p_1$, $p_2$ and $p_3$, with respect to $\min$ as the tropical addition, is the
  topological closure of the unsigend bounded cell $[2,1,3]$.

  The signed cell $C$ is precisely the union of the two maximal unsigned cells $[2,1,3]$ and $[23,1,-]$ together with the
  (relatively open) bounded edge of type $[23,1,3]$ sitting in-between.  The other signed cells come about by replacing ``$\geq$''
  by ``$\leq$'' in some subset of the constraints above.  For instance, exchanging ``$\geq$'' by ``$\leq$'' in \eqref{eq:signed:1}
  and keeping the other two yields the signed cell which is the union of the three unsigned cells $[2,-,13]$, $[12,-,3]$,
  $[123,-,-]$ and two (relatively open) edges in-between.  Altogether there are six maximal signed cells in this case.
\end{example}

The proper notion of a ``face'' of a tropical polyhedron is a subject of active research, see \cite{Joswig05} and
\cite{DevelinYu07}.  Notice that, like the tangent digraphs, the signed and unsigned cells depend on the arrangement of
s-hyperplanes, while several different arrangements may describe the same tropical polyhedron. For example,
\begin{equation}\label{eq:ex_basic_point_not_extreme}
 \{ x \in \trop^2 \mid x_1 \tplus x_2 \leq 1 \} = \{ x \in \trop^2 \mid x_1 \leq 1 \text{ and } x_2 \leq 1 \} \,.
\end{equation}
Even if a canonical external representation exists, see \cite{AllamigeonKatzJCTA2013}, it may not  satisfy the
genericity conditions of Corollary \ref{coro:val_and_inter_commute_for_generic_matrices}.  Thus this approach does not
easily lead to a meaningful notion of faces for tropical polyhedra.

Lacking a good notion of a ``face'', the following two results introduce suitable concepts which are good enough for our algorithms.

\begin{prop_def}[Tropical basic points] \label{prop:def_basic_points} Suppose that $(A \ b)$ is tropically sign generic and that
  Assumption~\ref{assumption_C} holds.  Let $\basis$ be a subset of $[m]$ of cardinality $n$ such that $\tper(|\pr_\basis|) > \zero$.  If the set
  $\tropP_\basis(A,b)$ is not empty, it contains a unique point, called a \emph{(feasible) tropical basic point} of $\tropP(A,b)$.
 
  The \emph{tropical basic points} of $\tropP(A,b)$ are exactly the valuations of the basic points of $\puiseuxP(\A,\b)$ for any
  lift $(\A \ \b) \in \sval^{-1} (A \ b)$.
\end{prop_def}
\begin{proof}
  This is a straightforward consequence of Corollary \ref{coro:val_and_inter_commute_for_generic_matrices} and the
  definition of basic points. Assumption \ref{assumption_C} ensures that the sets of the form
  $\puiseuxP_I(\A,\b)$, for any $I \subset [m]$, are contained in~$\puiseux^n_+$.
\end{proof}
\begin{remark}
Alternatively, the fact that 
 $\tropP_\basis(A,b)$, if it is not empty, contains
a unique element, follows from the Cramer theorem in the symmetrized tropical semiring~\cite{akian1990linear}, see Theorem~\ref{thm:tropical_cramer} below. 
This also implies that the technical condition that $x_j>\zero$ in Assumption~\ref{assumption_C} is not needed to derive the uniqueness of this element. 
\end{remark}

\begin{prop_def}[Tropical edges] \label{prop:def_edges} Suppose that $(A \ b)$ is tropically sign generic and that Assumption~\ref{assumption_C} holds.  Let $K$ be a subset of $[m]$ of cardinality $n-1$ such that $A_K$ has a  tropically sign non-singular maximal minor. If the set $\tropP_K(A,b)$ is not empty then it is called a  \emph{(feasible) tropical edge}. 

 The \emph{tropical edges} of $\tropP(A,b)$ are exactly the valuation of the edges of  $\puiseuxP(\A,\b)$ for any lift $(\A \ \b) \in \sval^{-1} (A \ b)$.
\end{prop_def}
\begin{proof}
  The arguments are the same as in the proof of Proposition-Definition~\ref{prop:def_basic_points}.
\end{proof}
The correspondence between basic points and edges with their tropical counterparts is illustrated in Figures \ref{fig:tropical_polytope} and
\ref{fig:puiseux_polytope} where basic points are depicted by red dots and edges by black lines.
These definitions are meaningful only if $(A \ b)$ is tropically sign generic. Otherwise, $\tropP(A,b)$ may have no basic points in the sense of Proposition-Definition~\ref{prop:def_basic_points}. For instance the set $\{ x \in \trop^2 \mid x_1 \geq x_2 \tplus 0 \text{ and } x_2 \geq x_1 \tplus 0 \}$ does not contain such point.
Notice that our genericity assumptions imply that the tropical edges arise as complete intersections of tropical halfspaces.  In this sense
Corollary~\ref{coro:val_and_inter_commute_for_generic_matrices} is a signed version of~\cite[Prop.~6.3]{SpeyerSturmfels04}.

A point $x$ in a tropically convex set $S$ is called an \emph{extreme point} of $S$ if, for any $y, z \in S$, $x \in \tconv(\{y,z\})$ implies $x=y$ or $x=z$.
\begin{proposition}\label{prop:extreme_basic}
Suppose that $(A \ b)$ is tropically sign generic and that Assumption \ref{assumption_C} holds. 
Then the extreme points of $\tropP(A,b)$ are tropical basic points.
\end{proposition}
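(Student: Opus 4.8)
The plan is to transport the classical Minkowski--Weyl decomposition of a Puiseux polyhedron through the valuation map, and then use Proposition-Definition~\ref{prop:def_basic_points} to identify the resulting tropical vertices as tropical basic points. First I would fix any lift $(\A\ \b)\in\sval^{-1}(A\ b)$. By Assumption~\ref{assumption_C} the Puiseux polyhedron $\puiseuxP(\A,\b)$ lies inside $\puiseux^n_+$, so it is pointed, and by Theorem~\ref{thm:val_and_inter_commute_for_generic_matrices} one has $\val(\puiseuxP(\A,\b))=\tropP(A,b)$ (we may assume this set is nonempty, else there is nothing to prove). Over the ordered field $\puiseux$ the ordinary Minkowski--Weyl theorem applies, so $\puiseuxP(\A,\b)=\conv\{\x^{(1)},\dots,\x^{(N)}\}+\cone\{\bm{r}^{(1)},\dots,\bm{r}^{(M)}\}$ with the $\x^{(k)}$ its vertices and the $\bm{r}^{(l)}$ its extreme rays; each vertex is of the form $\x^\basis$ for some $\basis\subset[m]$ of cardinality $n$ with $\A_\basis$ nonsingular, hence is a basic point in the sense of Section~\ref{sec-prelim}.

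Next I would push this internal description through $\val$. The key point is that inside $\puiseux^n_+$ no cancellation of leading terms ever occurs, so on $\puiseuxP(\A,\b)$ the valuation satisfies $\val(\x+\y)=\val(\x)\tplus\val(\y)$ and $\val(\plambda\x)=\val(\plambda)\ttimes\val(\x)$ exactly; this is precisely the mechanism behind the lifting result of Develin and Yu \cite[Proposition~2.1]{DevelinYu07}. Hence $\val$ carries ordinary convex (resp.\ conic) combinations of the $\x^{(k)}$ (resp.\ $\bm{r}^{(l)}$) to tropical ones, and applying it to the decomposition above gives
\[
\tropP(A,b)\;=\;\val\bigl(\puiseuxP(\A,\b)\bigr)\;=\;\tconv\{\val\x^{(k)}:k\in[N]\}\ \tplus\ \tcone\{\val\bm{r}^{(l)}:l\in[M]\}\,.
\]

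To conclude I would invoke the standard fact of tropical convexity that the extreme points of a set of the form $\tconv(V)\tplus\tcone(R)$ with $V$ finite lie in $V$ (see e.g.\ \cite{GaubertKatz2011minimal}): any element of such a set is a tropical combination of the generators, and an extreme point cannot be a nontrivial one, so it equals some element of $V$. Thus an extreme point $x$ of $\tropP(A,b)$ must equal $\val\x^{(k)}$ for some $k$, i.e.\ it is the valuation of a basic point of $\puiseuxP(\A,\b)$, and by Proposition-Definition~\ref{prop:def_basic_points} this means exactly that $x$ is a tropical basic point. The only delicate point in this plan is the commutation of $\val$ with the Minkowski--Weyl decomposition --- one must be sure the no-cancellation property is genuinely available, which here is guaranteed by $\puiseuxP(\A,\b)\subseteq\puiseux^n_+$; the remaining ingredients are either classical or already established earlier in the paper.
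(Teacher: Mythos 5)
Your proposal is correct and follows essentially the same route as the paper's own proof: fix a lift, take the classical Minkowski--Weyl decomposition of $\puiseuxP(\A,\b)$ over $\puiseux$ with vertices equal to basic points, push it through the valuation via \cite[Proposition~2.1]{DevelinYu07}, and conclude with the tropical Milman-type converse from \cite{GaubertKatz2011minimal} together with Proposition-Definition~\ref{prop:def_basic_points}. The only cosmetic difference is that the paper explicitly records that the generating rays also lie in $\puiseux^n_+$ (so that the Develin--Yu lifting applies), a point you leave implicit.
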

\begin{proof}
  Consider any lift $(\A \ \b) \in \sval^{-1} (A \ b)$. Then $\puiseuxP(\A,\b) \subset  \puiseux^n_+$ by Assumption~\ref{assumption_C}, and $\val(\puiseuxP(\A,\b)) = \tropP(A,b) $ by
  Corollary~\ref{coro:val_and_inter_commute_for_generic_matrices}. The basic points of $\puiseuxP(\A, \b)$ are precisely its extreme points. As a result, we have $\puiseuxP(\A, \b) = \conv(\P) + \cone(\RR)$, where $\P$ is the set of basic points, $\conv(\P)$ its convex hull, and $\cone(\RR)$ is a pointed polyhedral cone generated by a finite set $\RR \subset \puiseux^n$. Note that $\RR \subset \puiseux^n_+$ as $\puiseuxP(\A,\b) \subset  \puiseux^n_+$. Thus, by~\cite[Proposition~2.1]{DevelinYu07}, we know that $\tropP(A,b) = \tconv(\val(\P)) \tplus \tcone(\val(\RR))$. By Proposition~\ref{prop:def_basic_points}, $\val(\P)$ is precisely the set of tropical basic points of $\tropP(A,b)$.
The tropical analog of Milman's converse of the Krein-Milman theorem, which is proved for instance in Theorem~2 of~\cite{GaubertKatz2011minimal} in the case of polyhedra, implies that the set of extreme points of $\tropP(A,b)$ is included in $\val(\P)$.
It follows that every extreme point is basic.
\end{proof}

\begin{remark}\label{rk:therearefewerextremepoints}
  While extreme points are basic points, the converse does not hold. For example, $(1,1)$ is a basic point of the tropical polyhedron $\{ x
  \in \trop^2 \mid x_1 \leq 1 \text{ and } x_2 \leq 1 \}$, but it is not extreme.

The polars of sign cyclic polyhedral cones studied
in~\cite{AllamigeonGaubertKatzJCTA2011} are also examples in which not
every basic point is extreme. Actually, Theorems~2 and~6 in that
reference provide combinatorial characterizations of extreme and basic
points in terms of lattice paths. A comparison of both
characterizations shows that the lattice paths are more constrained in
the case of extreme points.
\end{remark}

\section{Pivoting between two tropical basic points} \label{sec:pivot}

\noindent
In this section, we show how to pivot from a tropical basic point to another, \ie\ to move along a tropical edge between
the two points, in a tropical polyhedron $\tropP(A,b)$, where $A \in \strop^{m \times n}$ and $b \in \strop^m$. Under some
genericity conditions, by Corollary~\ref{coro:val_and_inter_commute_for_generic_matrices} this is equivalent to the same
pivoting operation in an arbitrary lift $\puiseuxP(\A,\b)$ over Puiseux series of the considered tropical polyhedron,
where $\A \in \sval^{-1} (A)$ and $\b \in \sval^{-1}(b)$.  However, our method only relies on the tropical matrix $A$
and the tropical vector $b$. The complexity of this tropical pivot operation will be shown to be $O(n(m + n))$, which is
analogous to the classical pivot operation.

Pivoting is more easily described in homogeneous terms.  For $\epm=(A \ b)$ we consider the tropical cone $\tropC =
\tropP(\epm, \zero)$, seen as a subset of the tropical projective space $\tropProj^n$. This cone is defined as the
intersection of the half-spaces $\tropH^{\geq}_i := \{ x \in \tropProj^n \mid \epr_i^+ \tdot x \geq \epr_i^- \tdot x\}$
for $i \in [m]$. Similarly, we denote by $\tropH_i$ the s-hyperplane $\{ x \in \tropProj^n \mid \epr_i^+ \tdot x =\epr_i^-
\tdot x \}$. We also let $\tropC_I:=\tropP_I(\epm,\zero)$ for any subset $I\subset[m]$.

Throughout this section, we make the following assumptions.
\begin{assumption}\label{ass:general_position} 
  The matrix $\epm$ is tropically generic.
\end{assumption}
\begin{assumption}\label{ass:finite_coordinates} 
  Every point in $\tropC \setminus \{ (\zero, \dots, \zero) \}$ has finite coordinates.
\end{assumption}

Assumption~\ref{ass:general_position} is a tropical version of primal non-degeneracy.  It is strictly stronger than the condition that
$\epm=(A \ b)$ is tropically sign generic used in the previous section, and hence, in particular, we can make use of
Corollary~\ref{coro:val_and_inter_commute_for_generic_matrices}.  Under Assumption~\ref{ass:finite_coordinates}, which is strictly stronger
than Assumption~\ref{assumption_C}, the tropical polyhedron $\tropP(A,b)$ is a bounded subset of $\R^n$: To see this, consider $\tropC$ as a
subset of $\trop^{n+1}$. As $\tropC$ is a closed set, Assumption~\ref{ass:finite_coordinates} implies that there exists a vector $\ell \in
\R^{n+1}$ such that $x \geq \ell$ for all $x \in \tropC$.  Let $\tconv(P) \tplus \tcone(R)$ be the internal description of $\tropP(A,b)$
provided by Theorem~\ref{thm:trop_minkowski_weyl}.  If $R$ contains a point $r$, then it is easy to verify that $(r, \zero)$ lies in $\tropC$,
which contradicts Assumption~\ref{ass:finite_coordinates}.  Since every $p \in P$ belongs to $\tropP(A,b)$, the point $(p, \unit) $ belongs
to $\tropC$, and thus $p_j \geq l_j$ for all $j \in [n]$.  It follows that $\tropP(A,b) = \tconv(P)$ is a bounded subset of $\R^n$.

The Assumptions \ref{ass:general_position} and \ref{ass:finite_coordinates} are two out of three conditions required for a tropical linear
program to be standard in the sense of Theorem~\ref{th-main}.

As a consequence, the Puiseux polyhedron $\puiseuxP(\A, \b)$ is also bounded and contained in the interior of $\puiseux^n_+$. 

Through the bijection given in~\eqref{eq:homogenization}, the tropical basic point associated with a suitable subset $\basis \subset [m]$ is identified with the unique projective point $x^\basis \in \tropProj^n$ in the intersection $\tropC_\basis$. Besides, when pivoting from the basic point $x^\basis$, we move along a tropical edge $\tropE_K :=\tropC_K$ defined by a set $K = \basis \setminus \{ \ileaving \} $ for some $\ileaving \in \basis$. 

By Propositions~\ref{prop:def_basic_points} and~\ref{prop:def_edges}, a tropical edge $\tropE_K$ is a tropical line segment $\tconv(x^\basis, x^{\basis'})$. The other endpoint $x^{\basis'} \in \tropProj^n$ is a basic point for $\basis' = K \cup \{\ient \}$, where $\ient \in [m] \setminus \basis$. So, the notation
$\ileaving$ and $\ient$ refers to the indices leaving and entering
the set of active constraints $\basis$ which is maintained
by the algorithm. Notice that the latter set
corresponds to the {\em non-basic} indices in the classical
primal simplex method, so that the indices entering/leaving $\basis$
correspond to the indices leaving/entering the usual basis, respectively.

As a tropical line segment, $\tropE_K$ is known to be the concatenation of at most $n$ ordinary line segments.
\begin{proposition}[{\cite[Proposition~3]{develin2004}}] \label{prop:tropical_edge}
Let $\tropE_K = \tconv(x^\basis, x^{\basis'})$ be a tropical edge. Then there exists an integer $q \in [n]$ and $q+1$ points $\xi^1,  \dots, \xi^{q+1} \in \tropE_K$ such that 
\[
\tropE_K = [\xi^1, \xi^2]  \cup \dots \cup [\xi^{q},\xi^{q+1}] \qquad \text{ where } \xi^1 = x^\basis \text{ and } \xi^{q+1} = x^{\basis'} \ .
\]
Every ordinary segment  is of the form:
\begin{equation}
 [\xi^p, \xi^{p+1}] = \{ x^p + \lambda e^{J_p} \mid 0 \leq \lambda \leq \mu_p \} \ , \label{eq:ordinary_segment}
\end{equation}
where the length of the segment $\mu_p$ is a positive real number, $ J_p \subset [n+1]$, and the $j$th coordinate of the 
vector
 $e^{J_p}$ is equal to $1$ if $j \in J_p$, and to $0$ otherwise.
Moreover, the sequence of subsets $J_1, \dots, J_{q}$ satisfies:
\[ \emptyset \subsetneq J_1 \subsetneq \dots \subsetneq J_{q} \subsetneq [n+1] \ . \]
\end{proposition}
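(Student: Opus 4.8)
Since this statement is quoted verbatim from \cite[Proposition~3]{develin2004}, the task is to recall why a tropical line segment has exactly this shape; here is the argument I would give. First I would invoke Proposition-Definitions~\ref{prop:def_basic_points} and~\ref{prop:def_edges}, together with Assumption~\ref{ass:finite_coordinates}, to record that $\tropE_K=\tconv(x^\basis,x^{\basis'})$ and that both endpoints are points of $\tropProj^n$ admitting representatives in $\R^{n+1}$. Because the projective point is unchanged under tropical rescaling of $x^{\basis'}$, I would fix representatives $x,y\in\R^{n+1}$ of $x^\basis$ and $x^{\basis'}$ normalized so that $y_j\ge x_j$ for all $j\in[n+1]$, with equality for at least one index $j_0$; equivalently $\min_j(y_j-x_j)=0$. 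If $x^\basis=x^{\basis'}$ there is nothing to prove, so I may assume $x\ne y$.

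The core step is an explicit parametrization of the segment. With the above normalization, $\tconv(x,y)$ coincides with the path $\mu\mapsto z(\mu):=x\tplus(\mu\tscale y)$, that is $z(\mu)_j=\max(x_j,\mu+y_j)$, for $\mu\in[-\infty,0]$: the complementary branch $\lambda\mapsto\max(\lambda+x,y)$ collapses to the single point $y$ since $y\ge x$. Setting $d_j:=y_j-x_j\ge0$, each coordinate $z(\mu)_j$ is constant equal to $x_j$ for $\mu\le-d_j$ and moves with slope $1$ in $\mu$ (equalling $\mu+y_j$) for $\mu\ge-d_j$. Let $0<\delta_1<\dots<\delta_q$ be the distinct positive values among $d_1,\dots,d_{n+1}$ and put $\delta_0:=0$. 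On the parameter interval $\mu\in[-\delta_{q-p+1},-\delta_{q-p}]$ the ``active'' set $J_p:=\{j\in[n+1]\mid d_j\ge\delta_{q-p+1}\}$ is constant, every active coordinate moves with slope $1$ and the inactive ones stay frozen at $x_j$, so $z(\mu)=\xi^p+(\mu+\delta_{q-p+1})\,e^{J_p}$ there, with $\xi^p:=z(-\delta_{q-p+1})$. This presents that piece of the path as the ordinary segment $\{\xi^p+\lambda\,e^{J_p}\mid 0\le\lambda\le\mu_p\}$ with $\mu_p:=\delta_{q-p+1}-\delta_{q-p}>0$. One checks $\xi^1=z(-\delta_q)=x$ and $\xi^{q+1}=z(0)=\max(x,y)=y$ (as representatives), and that these $q$ ordinary segments exhaust $\tconv(x,y)$, the range $\mu\le-\delta_q$ contributing only $\xi^1$.

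It then remains to check the chain condition. As $p$ grows the threshold $\delta_{q-p+1}$ decreases, so $J_1\subsetneq J_2\subsetneq\dots\subsetneq J_q$, each inclusion strict since some $d_j$ attains the value $\delta_{q-p}$. Furthermore $J_1=\{j\mid d_j=\delta_q\}\ne\emptyset$ while $J_q=\{j\mid d_j>0\}$ misses $j_0$, so $J_q\subsetneq[n+1]$; from $\emptyset\subsetneq J_1\subsetneq\dots\subsetneq J_q\subsetneq[n+1]$ we get $q\le|J_q|\le n$, hence $q\in[n]$. I expect the only real friction to be the bookkeeping when several $d_j$ coincide and for the ``dead'' parameter range $\mu<-\delta_q$; the geometric content is simple — moving from $x^\basis$ toward $x^{\basis'}$, the coordinates switch on one at a time in order of decreasing $d_j$ and thereafter advance in lockstep — which is precisely the mechanism behind \cite[Proposition~3]{develin2004}.
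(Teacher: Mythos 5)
Your proof is correct: normalizing representatives so that $\min_j(y_j-x_j)=0$, parametrizing the segment by $\mu\mapsto x\tplus(\mu\tscale y)$, and sorting the differences $d_j=y_j-x_j$ to read off the breakpoints and the nested sets $J_p$ is exactly the mechanism behind the cited result. The paper itself gives no proof --- it quotes \cite[Proposition~3]{develin2004} --- and your argument is essentially the standard one from that reference, with the only (harmless) loose end being the degenerate case $x^\basis=x^{\basis'}$, which is excluded here anyway by the genericity assumptions under which tropical edges have distinct endpoints.
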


The vector $e^{J_p}$ is called the \emph{direction} of the segment $[\xi^p, \xi^{p+1}]$. The intermediate points $\xi^2, \dots, \xi^q$ are called \emph{breakpoints}.  In the tropical polyhedron depicted in Figure \ref{fig:tropical_polytope}, breakpoints are represented by white dots.

Note that, in the tropical projective space $\tropProj^n$, the directions $e^{J}$ and $-e^{[n+1] \setminus J}$ coincide. Both correspond to the direction of $\trop^n$ obtained by 
removing the $(n+1)$th coordinate of either $-e^{[n+1] \setminus J}$ if $(n+1) \in J$, or $e^{J}$ otherwise.

\subsection{Overview of the pivoting algorithm}

We now provide a sketch of the pivoting operation along a tropical edge $\tropE_K$. Geometrically, the idea is to traverse the ordinary segments $[\xi^1, \xi^2],   \dots , [\xi^{q},\xi^{q+1}]$ of $\tropE_K$. At each point $\xi^p$, for $p \in [q]$, we first determine the direction vector $e^{J_q}$, then move along this direction until the point $\xi^{p+1}$ is reached. As the tangent digraph at a point $x \in \tropC$ encodes the local geometry of the tropical cone $\tropC$ around $x$, the directions vectors can be read from the tangent digraphs. Moreover, the tangent digraphs are acyclic under Assumption~\ref{ass:general_position}. This imposes strong combinatorial conditions on the tangent digraph, which, in turn, allows to easily determine the feasible directions.

For the sake of simplicity, let us suppose that the tropical edge consists of two consecutive segments $[\xi, \xi']$ and $[\xi', \xi'']$, with direction vectors $e^J$ and $e^{J'}$ respectively.

Starting at the basic point $\xi = x^{K \cup\{ \ileaving \}}$, we shall prove below that, at every basic point, the tangent digraph is spanning tree where every hyperplane node has exactly one incoming arc and one outgoing arc. In other words, for every $i \in K \cup \{\ileaving\}$, the sets $\argmax(W^+_i \tdot \xi) $ and $\argmax(W^-_i \tdot \xi)$ are both reduced to a singleton, say $\{j^+_i \}$ and $\{j^-_i\}$.
We want to ``get away'' from the s-hyperplane $\tropH_{\ileaving}$. Since the direction vector $e^J$ is a $0/1$ vector, the only way to do so is to increase the variable indexed by $j^+_{\ileaving}$ while not increasing the component indexed by $j^-_{\ileaving}$. Hence, we must have $j^+_{\ileaving} \in J$ and  $j^-_{\ileaving} \not \in J$.
 While moving along $e^J$, we also want to stay inside the s-hyperplane $\tropH_i$ for $i \in K$. Hence, if  $j^+_i \in J$ for some $i \in K$, we must also have $j^-_i \in J$. Similarly,  if $j^+_i \not \in J$, then we must also have $j^-_i \not \in J$. 
Removing the hyperplane node $\ileaving$ from the tangent digraph $\digraph_{\xi}$ provides two connected components, the first one, $C_+$, contains  $j^+_{\ileaving}$, and the second one, $C_-$ contains  $j^-_{\ileaving}$. From the discussion above, it follows that the set $J$ consists of the coordinate nodes in $C^+$.

When moving along $e^J$ from $\xi$, we leave the s-hyperplane $\tropH_{\ileaving}$. Consequently, the hyperplane node $\ileaving$ ``disappears'' from the tangent digraph. It turns out that this is the only modification that happens to the tangent digraph. More precisely, at every point in the open segment $\oiv{\xi, \xi'}$, the tangent digraph is the graph obtained from $\digraph_{\xi}$ by removing the hyperplane node $\ileaving$ and its two incident arcs. We shall denote this digraph by $\digraph_{\oiv{\xi, \xi'}}$. By construction, $\digraph_{\oiv{\xi, \xi'}}$ is acyclic, consists of two connected components, and every hyperplane node has one incoming and one outgoing arc.

We shall move  from $\xi$ along $e^J$ until ``something'' happens to the tangent digraph. In fact only two things can happens, depending whether $\xi'$ is a breakpoint or a basic point. As we supposed $\xi'$ to be a breakpoint, a new arc $a_\ent$  will ``appear'' in the tangent digraph, \ie\ $\digraph_{\xi'} = \digraph_{\oiv{\xi, \xi'}} \cup\{ a_\ent\}$.  Let us sketch how the arc $a_\ent$ can be found. We denote $a_\ent = (j_\ent, k)$, where $j_\ent$ is a coordinate node and $k\in K$ is a hyperplane node.  We shall see that $j_\ent$ must belongs to $J$ (\ie\ the component $C_+$), while $k$ must belongs to the component $C_-$.

Hence, the arc $a_\ent$ ``reconnects'' the two components $C_+$ and $C_-$ (see Figure~\ref{fig:tangent_graph_update}). Since $k$ had one incoming and one outgoing arc in  $\digraph_{\oiv{\xi, \xi'}}$, it has exactly three incident arcs in $\digraph_{\xi'}$. One of them is $a_\ent =(j_\ent, k)$; a second one, $a_\leaving = (j_\leaving, k)$, has the same orientation as $a_\ent$; and the third one, $a'=(k,l)$, has an orientation opposite to $a_\ent$ and $a_\leaving$.

Let us now find the direction vector $e^{J'}$ of the second segment $[\xi', \xi'']$. Consider the hyperplane node $k$ with the three incidents arcs
$a_\ent, a_\leaving$ and $a'$. By Proposition~\ref{prop:tropical_edge}, we know that $J \subset J'$, hence we must increase the variable
$j_\ent$. Since we want to stay inside the hyperplane $\tropH_k$, it follows that we must also increase the variable indexed by $\ell$. On the other hand, we do not increase the variable $j_\leaving$.
As before, all hyperplane nodes $i \in K \setminus\{k\}$ have exactly one incoming and one outgoing arc.
 Removing the arc $a_\leaving$ from the graph provides two connected component, the first one $C_+'$ contains the coordinate nodes $j_\ent$ and $\ell$ as well as the hyperplane node $k$, while the second one $C_-'$ contains $j_\leaving$. The new direction set $J'$ is given by the coordinate nodes in $C_+'$. 

The tangent digraph in the open segment $\oiv{\xi', \xi''}$ is again constant, and defined by  $\digraph_{\oiv{\xi', \xi''}} = \digraph_{\xi'} \setminus\{a_\leaving\}$. Hence, $\digraph_{\oiv{\xi', \xi''}}$ is an acyclic graph, with two connected components $C'_+$ and $C'_-$, where every hyperplane node has one incoming and one outgoing arc.

The basic point $\xi''$ is reached when a new s-hyperplane $\ient \not \in K$ is hit. This happens when the hyperplane node $\ient$ ``appears'' in the tangent digraph, along with one incoming $(j^+, \ient)$ and one outgoing arc $(\ient,j^-)$.  Observe that we must have $j^- \in J$ and $j^+ \not \in J$. It follows that the two components $C'_+$ and $C'_-$ are reconnected by adding $\ient$ and its two incident arcs.

In Section~\ref{sec:directions}, we prove that the tangent digraph satisfy the above-mentioned characterization, and that they provides the feasible directions. In Section~\ref{sec:lenght_segment}, we characterize the lengths of the ordinary segments, that is we deduce when an arc or an hyperplane node ``appears'' in the tangent digraphs. In Section~\ref{subsec:efficient_pivot}, we prove that the tangent digraphs evolves as described above. It allows us to incrementally update the information needed to find the directions and lengths of the segments. This will finally provide an efficient implementation of the pivoting operation.

\subsection{Directions of ordinary segments }\label{sec:directions}

Given a point $x$ in a tropical cone $\tropD$, we say that the direction $e^J$, with $\emptyset \subsetneq J \subsetneq [n+1]$, is \emph{feasible} from $x$ in $\tropD$ if there exists $\mu > 0$ such that the ordinary segment $\{ x+ \lambda e^J  \mid 0 \leq \lambda \leq \mu \}$ is included in $\tropD$. 
The following lemma will be helpful to prove the feasibility of a direction.

\begin{lemma} \label{lemma:feasible_directions}
Let $x \in \R^{n+1}$. Then, the following properties hold:
\begin{enumerate}[(i)]
\item if $x$ belongs to $\tropH^{\geq}_i\setminus \tropH_i$,  every direction is feasible from $x$ in $\tropH^{\geq}_i$.
\item if $x$ belongs to $\tropH_i$, the direction $e^J$ is feasible from $x$ in the half-space $\tropH^{\geq}_i$ if, and only if, $\argmax(\epr^+_i \tdot x) \cap J \neq \emptyset$ or $\argmax(\epr^-_i \tdot x) \cap J = \emptyset$.
\item if $x$ belongs to $\tropH_i$, the direction $e^J$ is feasible from $x$ in the s-hyperplane $\tropH_i$ if, and only if, the sets $\argmax(\epr^+_i \tdot x) \cap J $ and $\argmax(\epr^-_i \tdot x) \cap J $ are both empty or both non-empty.
\end{enumerate}
\end{lemma}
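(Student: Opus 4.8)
The plan is to analyze the behavior of the two tropical linear forms $\epr^+_i \tdot(x+\lambda e^J)$ and $\epr^-_i \tdot(x+\lambda e^J)$ as functions of $\lambda\ge 0$, near $\lambda=0$. Each of these is a maximum of finitely many affine functions of $\lambda$; the slope of the term indexed by coordinate $j$ is $1$ if $j\in J$ and $0$ otherwise. So for small $\lambda>0$, the value $\epr^+_i \tdot(x+\lambda e^J)$ equals $\epr^+_i \tdot x$ when $\argmax(\epr^+_i\tdot x)\cap J=\emptyset$ (no active term is growing), and it equals $\epr^+_i \tdot x+\lambda$ when $\argmax(\epr^+_i\tdot x)\cap J\neq\emptyset$ (some active term grows with unit slope, and since it was already maximal it stays maximal for small $\lambda$). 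The same dichotomy applies to $\epr^-_i$. I would record this observation as the single computational lemma underlying all three parts.

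\textbf{Part (i).} If $x\in\tropH^\geq_i\setminus\tropH_i$, then $\epr^+_i\tdot x>\epr^-_i\tdot x$, a strict inequality between real numbers (both are finite since $x\in\R^{n+1}$ and by Assumption~\ref{assumption_A}). The maps $\lambda\mapsto \epr^\pm_i\tdot(x+\lambda e^J)$ are continuous, so the strict inequality persists on a neighborhood of $\lambda=0$; hence there is $\mu>0$ with $x+\lambda e^J\in\tropH^\geq_i$ for $0\le\lambda\le\mu$. This needs no case analysis.

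\textbf{Parts (ii) and (iii).} Here $x\in\tropH_i$, so $\epr^+_i\tdot x=\epr^-_i\tdot x=:c\in\R$. Using the slope observation, for small $\lambda>0$ the pair $(\epr^+_i\tdot(x+\lambda e^J),\,\epr^-_i\tdot(x+\lambda e^J))$ equals $(c+\lambda\,[\![A^+\cap J\neq\emptyset]\!],\,c+\lambda\,[\![A^-\cap J\neq\emptyset]\!])$ where $A^\pm:=\argmax(\epr^\pm_i\tdot x)$. For (ii), the direction is feasible in $\tropH^\geq_i$ iff $c+\lambda[\![A^+\cap J\neq\emptyset]\!]\ge c+\lambda[\![A^-\cap J\neq\emptyset]\!]$ for small $\lambda>0$, which holds iff $[\![A^+\cap J\neq\emptyset]\!]\ge[\![A^-\cap J\neq\emptyset]\!]$, i.e.\ iff $A^+\cap J\neq\emptyset$ or $A^-\cap J=\emptyset$. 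For (iii), feasibility in $\tropH_i$ requires equality, i.e.\ $[\![A^+\cap J\neq\emptyset]\!]=[\![A^-\cap J\neq\emptyset]\!]$, i.e.\ both intersections empty or both nonempty. In each case one must also check the converse direction (feasibility $\Rightarrow$ the stated condition on the $\argmax$'s): if the condition fails, the displayed affine functions diverge with a definite sign for all small $\lambda>0$, so no positive $\mu$ works. Finally I should note that Assumption~\ref{assumption_B} guarantees $j\notin\argmax(\epr^+_i\tdot x)\cap\argmax(\epr^-_i\tdot x)$ is automatic (a variable appears on at most one side), so the two growing-slope indicators are well-defined per coordinate.

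\textbf{Expected obstacle.} The only delicate point is justifying that the ``leading'' behavior computed at $\lambda=0$ genuinely controls a whole segment $[0,\mu]$: one must argue that the $\argmax$ sets of $\epr^\pm_i\tdot(x+\lambda e^J)$ do not shrink for $\lambda$ in a small enough interval (new indices may enter the argmax, but active-and-growing indices stay active), so the affine formula for the max value is valid throughout $[0,\mu]$. This is a standard polyhedral/piecewise-linear continuity argument and is where I would spend a sentence or two of care, but it is not conceptually hard. Everything else is the bookkeeping of the four sign cases above.
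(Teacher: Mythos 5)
Your proposal is correct and follows essentially the same route as the paper: the paper's proof also rests on the single observation that, for $\lambda>0$ sufficiently small, $\epr^\pm_i\tdot(x+\lambda e^J)$ equals $\epr^\pm_i\tdot x+\lambda$ or $\epr^\pm_i\tdot x$ according to whether $\argmax(\epr^\pm_i\tdot x)$ meets $J$, with part (i) dismissed as immediate (your continuity argument). The extra care you flag about the argmax behavior persisting on a whole small interval, and the aside on Assumption~\ref{assumption_B}, are fine but not needed beyond what the paper already states.
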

\begin{proof}
The first point is immediate. 
To prove the last two points, observe that if $x \in \tropH_i$, then $\epr^+_i \tdot x = \epr^-_i \tdot x > \zero$, thanks to $x \in \R^{n+1}$ and  Assumption~\ref{assumption_A}. Then, for $\lambda > 0$ sufficiently small, we have:
\begin{equation*}
\epr^+_i \tdot (x+ \lambda  e^J) = 
\begin{cases}
(\epr^+_i \tdot x) + \lambda & \text{if} \ \argmax(\epr^+_i \tdot x) \cap J \neq \emptyset \ , \\
\epr^+_i \tdot x  & \text{otherwise} \ ,
\end{cases}
\end{equation*}
and the same property holds for $\epr^-_i \tdot x$.
\end{proof}

We propose to determine feasible directions with tangent graphs. It turns out that tangent graphs in a tropical edge have a very special structure.
Indeed, under Assumption~\ref{ass:general_position}, these graphs do not contain any cycle by
Lemma~\ref{lemma:graph_of_types_is_cycle_free}. In other words, they are forests: each connected component
is a tree.  Hence we have the equality
\begin{equation}
\text{number of nodes} = \text{number of edges} + \text{number of connected components} \,.
\label{eq:acyclic_graph}
\end{equation}
Note that \cite[Proposition 17]{develin2004} determines that number of connected components.

We introduce some additional basic notions and notations on directed graphs. Two nodes of a digraph are said to be \emph{weakly connected} if they are connected in the underlying undirected graph.
 Given a directed graph $\digraph$ and a set $\mathcal{A}$ of arcs between some nodes of $\digraph$, we denote by $\digraph \cup \mathcal{A}$ the digraph obtained by
adding the arcs of $\mathcal{A}$. Similarly, if $\mathcal{A}$ is a subset of arcs of $\digraph$, we denote by $\digraph
\setminus \mathcal{A}$ the digraph where the arcs of $\mathcal{A}$ have been removed. By extension, if $\mathcal{N}$ is
a subset of nodes of $\digraph$, then $\digraph \setminus \mathcal{N}$ is defined as the digraph obtained by removing
the nodes in $\mathcal{N}$ and their incident arcs. 
The \emph{degree} of a node of $\digraph$ is defined as the pair $(p_1, p_2)$, where $p_1$ and $p_2$ are the numbers of incoming and outgoing arcs incident to
the node.

\begin{proposition}\label{prop:tangent_graph_interior_edge}
Let $x$ be a point in a tropical edge $\tropE_K$. Then, exactly one of the following  cases arises:
\begin{asparaenum}[(C1)]
\item\label{item:shape1} $x$ is a basic point for the basis $K \cup \{\ileaving \}$, where $\ileaving \in [m] \setminus K$. The tangent graph $\graph_x$ at $x$ is a spanning tree, and the set of hyperplane nodes is $K \cup \{ \ileaving \}$. In the tangent digraph $\digraph_x$, every hyperplane node has degree $(1,1)$. 
Let $J$ be the set of coordinate nodes weakly connected to the unique node in $\argmax(\epr_{\ileaving}^+ \tdot x)$ in the digraph $\digraph_x \setminus \{\ileaving\}$. The only feasible direction from $x$ in $\tropE_K$ is $e^J$.

\item\label{item:shape2} $x$ is in the relative interior 
 of an ordinary segment. The tangent graph $\graph_x$ is a forest with two connected components, and the set of hyperplane nodes is $K$. In the tangent digraph $\digraph_x$, every hyperplane node has degree $(1,1)$.
 Let $J$ be the set of coordinate nodes in one of the components. The two feasible directions from $x$ in $\tropE_K$ are $e^J$ and $-e^J = e^{[n+1] \setminus J}$.

\item\label{item:shape3} $x$ is a breakpoint. The tangent graph $\graph_x$ is a spanning tree, and the set of hyperplane nodes is $K$. In the tangent digraph $\digraph_x$, there is exactly one hyperplane node $\ibreak$ with degree $(2,1)$ or $(1,2)$, while all other hyperplane nodes have degree $(1,1)$. 

Let $a$ and $a'$ be the two arcs incident to $\ibreak$ with same orientation. 
Let $J$ and $J'$ be the set of coordinate nodes weakly connected to $k$ in $\digraph_x \setminus \{a\}$ and $\digraph_x \setminus \{a'\}$, respectively. 
The two feasible directions from $x$ in $\tropE_K$ are $e^{J}$ and $e^{J'}$.
\end{asparaenum}
\end{proposition}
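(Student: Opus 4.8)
The plan is to deduce everything from two structural features of the tangent graph at $x$: it is a bipartite \emph{forest} whose hyperplane-node set $H$ contains $K$, and feasibility of a direction can be read off from it through Lemma~\ref{lemma:feasible_directions}. Since $x\in\tropE_K\subseteq\tropC$ has finite coordinates by Assumption~\ref{ass:finite_coordinates}, the tangent graph $\graph_x$ is defined; it is a forest by Lemma~\ref{lemma:graph_of_types_is_cycle_free} together with Assumption~\ref{ass:general_position}. For every hyperplane node $i$ one has $\epr_i^+\tdot x=\epr_i^-\tdot x>\zero$ (strict positivity using finiteness of $x$ and Assumption~\ref{assumption_A}), so both $\argmax(\epr_i^+\tdot x)$ and $\argmax(\epr_i^-\tdot x)$ are nonempty, whence $i$ has at least one incoming and one outgoing arc, i.e.\ degree $\ge(1,1)$. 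Counting edges in the bipartite forest via \eqref{eq:acyclic_graph}, with $c\ge 1$ components, gives $|H|+(n+1)=\bigl(\sum_{i\in H}\deg i\bigr)+c\ge 2|H|+c$, hence $|H|\le n$; in particular no $n+1$ of the s-hyperplanes $\tropH_i$ pass through a finite common point, so $H$ equals $K$ or $K\cup\{\ileaving\}$ for a single $\ileaving\notin K$. Finally, for $i\notin H$ we have $\epr_i^+\tdot x>\epr_i^-\tdot x$ and any direction is feasible in $\tropH^{\geq}_i$, so a direction $e^J$ with $\emptyset\subsetneq J\subsetneq[n+1]$ is feasible from $x$ in $\tropE_K=\bigcap_{i\in K}\tropH_i\cap\tropC$ if and only if, by Lemma~\ref{lemma:feasible_directions}, it is feasible in $\tropH_i$ for all $i\in K$ and in $\tropH^{\geq}_i$ for all $i\in H\setminus K$.

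If $|H|=n$ then the count forces $c=1$ and $\sum_{i\in H}\deg i=2|H|$, so $\graph_x$ is a spanning tree in which every hyperplane node has degree exactly $(1,1)$, and $x\in\tropC_{K\cup\{\ileaving\}}$ is a basic point. For $i\in K$ the degree-$(1,1)$ s-hyperplane constraint of Lemma~\ref{lemma:feasible_directions}(iii) says the unique in-neighbour and out-neighbour of $i$ are simultaneously in or out of $J$; chaining these identifications through each of the two components of $\digraph_x\setminus\{\ileaving\}$ (deleting the degree-$2$ node $\ileaving$ from a tree) forces $J$ to be the coordinate-node set of a union of such components. The half-space constraint at $\ileaving$, namely $\argmax(\epr_\ileaving^+\tdot x)\cap J\ne\emptyset$ or $\argmax(\epr_\ileaving^-\tdot x)\cap J=\emptyset$, then rules out the component of the (unique) out-neighbour and picks exactly the component of the unique node of $\argmax(\epr_\ileaving^+\tdot x)$; this set is feasible by construction, so it is the only feasible direction. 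This is case~(C1).

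If $|H|=n-1$ then $H=K$ and the count gives $c\in\{1,2\}$. When $c=2$, every hyperplane node again has degree $(1,1)$, $\graph_x$ is a forest with two components, and the same chaining shows the only feasible directions are $e^{C}$ and $e^{[n+1]\setminus C}$ for $C$ the coordinate-node set of one component — two opposite directions. When $c=1$, $\graph_x$ is a spanning tree with $\sum_{i\in K}\deg i=2|K|+1$, so exactly one hyperplane node $k\in K$ has degree $(2,1)$ or $(1,2)$ and all others have degree $(1,1)$. Deleting $k$ yields three components, attached to $k$ along its three arcs at coordinate nodes $c_1,c_2$ on the doubled side and $c_3$ on the single side; chaining inside each component together with the s-hyperplane constraint at $k$, which reads: $c_3\in J$ if and only if $c_1\in J$ or $c_2\in J$, leaves exactly two admissible $J$'s, the coordinate sets of $(\text{comp.\ of }c_2)\cup(\text{comp.\ of }c_3)$ and of $(\text{comp.\ of }c_1)\cup(\text{comp.\ of }c_3)$. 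These are precisely the sets $J,J'$ obtained by deleting one of the two equally-oriented arcs at $k$, and they are not opposite since $J\cap J'\supseteq(\text{comp.\ of }c_3)\ne\emptyset$ while $J\cup J'=[n+1]$.

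It remains to name the cases. By Proposition~\ref{prop:tropical_edge}, a point of $\tropE_K$ is either an endpoint (a basic point, with one feasible direction), a relative-interior point of an ordinary segment (feasible directions $\pm e^{J_p}$, opposite), or a breakpoint (feasible directions $e^{J_p}$ and $-e^{J_{p-1}}$ with $J_{p-1}\subsetneq J_p$, hence not opposite). Matching direction counts with the analysis above, $|H|=n$ gives case~(C1), $|H|=n-1$ with $c=2$ gives case~(C2), and $|H|=n-1$ with $c=1$ gives case~(C3); since the three alternatives for $(|H|,c)$ are mutually exclusive and exhaustive, so are the three cases. I expect the main obstacle to be the bookkeeping in case~(C3): checking that the degree-$3$ node lies in $K$ (so its \emph{s-hyperplane}, not merely half-space, constraint is in force), that its deletion produces exactly three components, and that precisely the two advertised direction sets — and no others — satisfy all constraints. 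The edge count \eqref{eq:acyclic_graph} and the chaining of degree-$(1,1)$ identifications along a tree are the two routine load-bearing steps.
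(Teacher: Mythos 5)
Your proof is correct and follows essentially the same route as the paper's: the forest structure from Lemma~\ref{lemma:graph_of_types_is_cycle_free}, the node/edge/component count \eqref{eq:acyclic_graph} to force $|H|\in\{n-1,n\}$ and split into three $(|H|,c)$ cases, and Lemma~\ref{lemma:feasible_directions} applied along the tree to extract the admissible $J$'s. The one place you diverge is case (C1): the paper first exhibits a matching in the spanning tree (via Lemma~\ref{lemma:matching_in_graph_is_max_permutation}) to establish $\tper(|A_{K\cup\{\ileaving\}}|)>\zero$ directly, and then invokes uniqueness of the feasible direction, whereas you run the implication in the other order (unique direction $\Rightarrow$ endpoint $\Rightarrow$ basic point for $K\cup\{\ileaving\}$); this is valid, though it leaves implicit the identification that the endpoint's basis must coincide with $K\cup\{\ileaving\}$, and the paper's explicit matching is in fact reused later in the complexity analysis of Theorem~\ref{thm:reduced_costs}.
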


\begin{proof}
  Since $x$  has finite entries, the graph $\graph_x$  contains exactly $n+1$ coordinate nodes. Let $n'$ be the number of
  hyperplane nodes in $\graph_x$. Consider any $i \in K$. Since $x$ is contained in the s-hyperplane $\tropH_i$ and $x \in \R^{n+1}$,
  we have $\epr_i^+ \tdot x = \epr_i^- \tdot x > \zero$. Thus $K$ is contained in the set of hyperplane nodes. Therefore $n' \geq
  n-1$. As there is at least one connected component, there is at most $n+n'$ edges by~\eqref{eq:acyclic_graph}.
  Besides, each hyperplane node is incident to at least two edges, so that there is at least $2n'$ edges in
  $\graph_x$. We deduce that $n' \leq n$. As a result, by using~\eqref{eq:acyclic_graph}, we can distinguish three
  cases:
\begin{enumerate}[(i)]
\item\label{item:tangent_graph_basic_point} $n' = n$, in which case there is only one connected component in $\graph_x$, and exactly $2n$ edges. Besides, all the hyperplane nodes have degree $(1,1)$ in $\digraph_x$.
\item\label{item:tangent_graph_interior_segment} $n' = n-1$, the graph $\graph_x$ contains precisely two connected components and $2n'-2$ edges. As in the previous case, every hyperplane node has degree $(1,1)$ in $\digraph_x$.
\item\label{item:tangent_graph_breakpoint} $n' = n-1$ and $\graph_x$ has one connected component. In this case, there are $2n'-1$ edges. In $\digraph_x$, there is exactly one hyperplane node with degree $(2,1)$ or $(1,2)$, and all the other hyperplane nodes have degree $(1,1)$.
\end{enumerate}
We next show that these cases correspond to the ones in our claim. 

\begin{asparaitem}
\item[\emph{Case~\eqref{item:tangent_graph_basic_point}}:] Since $n' = n$, the set of hyperplanes nodes is of the form $K \cup \{\ileaving\}$ for some $\ileaving \not \in K$. 
Moreover, $\graph_x$ is a spanning tree. As a consequence, it contains a matching between the coordinate nodes $[n]$ and the hyperplanes nodes $K \cup \{\ileaving\}$. Such a matching can be constructed as follows.  Let $\digraph'$ be the digraph obtained by directing the edges of $\graph_x$ towards the coordinate node $n+1$. In this digraph, every coordinate node $ j\in [n]$ has exactly one outgoing arc to a hyperplane node $\sigma(j)$, as there is exactly one path from $j$ to $n+1$ in the spanning tree $\graph_x$. 
Moreover, every hyperplane node $i$ has exactly one incoming arc and one outgoing arc in $\digraph'$. Indeed, $i$ is incident to two arcs in $\digraph'$, and exactly one of them leads to the path to coordinate node $n+1$. We conclude that $\sigma(j) \neq \sigma(j')$ when $j \neq j'$. Thus the set of edges $\{(j, \sigma(j))\mid j \in [n] \}$ forms the desired matching.
Then, by Lemma~\ref{lemma:matching_in_graph_is_max_permutation}, the submatrix $\epm'$ of $\epm$ formed from the columns in $[n]$ and the rows in $K \cup \{\ileaving\}$ satisfy $\tper(|\epm'|) > \zero$. Furthermore, $\epm' = A_{K \cup \{\ileaving \}}$. As a consequence, $x$ is a basic point for the set $K \cup \{\ileaving \}$.

Since the graph $\graph_x$ is a spanning tree where the hyperplane node $\ileaving$ is not a leaf, 
removing $\ileaving$ from $\graph_x$ provides two connected components $C^+$ and $C^-$, containing the coordinate nodes in $\argmax(\epr_{\ileaving}^+ \tdot x)$ and
in $\argmax(\epr_{\ileaving}^- \tdot x)$, respectively. Let $J$ be the set of the coordinate nodes in $C^+$.

We claim that the direction $e^J$ is feasible from $x$ in $\tropE_K$. Indeed, if the hyperplane node $i \in K$ belongs to $C^+$,  then $\argmax(\epr^+_i \tdot x) \subset J$ and $\argmax(\epr^-_i \tdot x) \subset J$. In contrast, if the node $i \in K$ belongs to $C^-$,  we have
$\argmax(\epr^+_i \tdot x) \cap J = \argmax(\epr^-_i \tdot x) \cap J = \emptyset$. By Lemma~\ref{lemma:feasible_directions}, this shows that the direction $e^J$ is feasible in all s-hyperplanes  $\tropH_i$ with $i \in  K$. 
It is also feasible in the half-space $\tropH^\geq_{\ileaving}$, since $x \in \tropH_{\ileaving}$ and $\argmax(\epr^+_{\ileaving} \tdot x) \subset J$. Finally, for all $i \not \in K \cup \{\ileaving\}$, the point $x$ belongs $\tropH^\geq_i \setminus \tropH_i$. Indeed, if $x \in \tropH_i$,  then $i$ would be a hyperplane node. Thus, by Lemma~\ref{lemma:feasible_directions}, the direction $e^J$ is feasible in $\tropH^\geq_i$. As $\tropE_K = (\cap_{i \in K} \tropH_i) \cap (\cap_{i \not \in K} \tropH^\geq_i)$, this proves the claim.

Since $x$ is a basic point it admits exactly one feasible direction in $\tropE_K$. Thus $e^J$ is the only feasible direction from $x$ in $\tropE_K$.

\item[\emph{Case~\eqref{item:tangent_graph_interior_segment}}:] In this case, $\graph_x$ is a forest with two components $C_1$ and $C_2$, and $K$ is precisely the set of hyperplane nodes. Let $J$ be the set of coordinate nodes in $C_1$.
Then Lemma~\ref{lemma:feasible_directions} shows that the direction $e^J$ is feasible from $x$ in $\tropE_K$. Indeed, the point $x$ belongs to $\tropH^{\geq}_i\setminus \tropH_i$ for $i \not \in K$. Besides, for all $i \in K$, the sets $\argmax(\epr^+_i \tdot x) \cap J$ and $\argmax(\epr^-_i \tdot x) \cap J$ are both non-empty if $i$ belongs to $C_1$, and both empty otherwise. 

Symmetrically, the direction $e^{[n+1] \setminus J} = -e^J$ is also feasible in $\tropE_K$, as $[n+1] \setminus J$ is the set of coordinate nodes in the component $C_2$. It follows that $x$ is in the relative interior of an ordinary segment.

\item[\emph{Case~\eqref{item:tangent_graph_breakpoint}}:] The graph $\graph_x$ is a spanning tree. Let $\ibreak$ be the unique half-space node of degree $(2,1)$ or $(1,2)$ in $\digraph_x$ and $a,a'$ the two arcs incident to $\ibreak$ with the same orientation.

Then $\digraph_x \setminus \{a\}$  consists of two weakly connected components $C_1$ and $C_2$. Without loss of generality, we assume that $\ibreak$ belongs to $C_1$. 
Let $J$ be the set of coordinate nodes in $C_1$. We now prove that $e^{J}$ is feasible from $x$ in $\tropE_K$, thanks to Lemma~\ref{lemma:feasible_directions}. Indeed, $x \in \tropH^{\geq}_i\setminus \tropH_i$ for $i \not \in K$. Besides, if $i \in K$, the sets $\argmax(\epr^+_i \tdot x) \cap J$ and $\argmax(\epr^-_i \tdot x) \cap J$ are both non-empty if $i \in C_1$, and both empty if $i \in C_2$. Thus, $e^{J}$ is feasible in the s-hyperplane $\tropH_i$. 

Similarly, let $J'$ be the set of coordinate nodes weakly connected to $\ibreak$ in $\digraph_x \setminus \{a'\}$. Then the direction $e^{J'}$ is also feasible.
 Note that $J$ and $J'$ are neither equal nor complementary. Thus, there are two distinct and non-opposite directions which are feasible from $x$ in $\tropE_K$, which implies than $x$ is a breakpoint. \qedhere
\end{asparaitem}
\end{proof}

\begin{example} \label{exmp:directions}
  Figure~\ref{fig:tangent_graphs} depicts the tangent digraphs at every point of the tropical edge $\tropE_K$ for
  $K=\{$\ref{eq:running_example_eq2},\ref{eq:running_example_eq3}$\}$, and this illustrates Proposition~\ref{prop:tangent_graph_interior_edge}.
  The set $\basis=\{$\ref{eq:running_example_eq2},\ref{eq:running_example_eq3},\ref{eq:running_example_eq4}$\}$ of constraints determines
 the basic point
  $x^\basis = (1,0,0)$. From its tangent digraph, we deduce that the initial ordinary segment of the edge $\tropE_K$ is directed by  $e^{\{2\}}$. 

  The tangent digraph at a point in $](1,1,0),(1,0,0)[$ has exactly two weakly connected components. They yield the feasible directions $e^{\{2\}}$ and
  $e^{\{1,3,4\}}$, which correspond to the vectors $(0,1,0)$ and $(0,-1,0)$ of $\trop^3$. 
  
At the breakpoint $(1,1,0)$, the tangent digraph is weakly connected, and the hyperplane node~\ref{eq:running_example_eq2} has degree $(2,1)$. Removing the arc from coordinate node $4$ to~\ref{eq:running_example_eq2} provides two weakly connected components, respectively $\{1, 2\} \cup \{\text{\ref{eq:running_example_eq2}}\}$ and $\{3, 4\} \cup \{\text{\ref{eq:running_example_eq3}}\}$. The coordinate nodes of the component containing \ref{eq:running_example_eq2} yields the feasible direction $e^{\{1,2\}}$. Similarly, it can be verified that the other feasible direction, obtained by removing the arc from coordinate node $2$, is the vector $e^{\{1,3,4\}}$.
\end{example}

\subsection{Moving along an ordinary segment} \label{sec:lenght_segment} In this section we provide the exact details on
how to obtain the next point $\xi'$ from a given point $\xi$ and a direction given in terms of the set $J$.  We
determine whether $\xi'$ is a basic point or a break point, and we determine the length $\mu$ of the resulting segment
$[\xi, \xi'] = \{ \xi + \lambda e^J \mid 0 \leq \lambda \leq \mu \}$ of the tropical edge $\tropE_K$.  The metric
results from this section will interpreted in terms of tangent digraph in the next section.

For all $i \in [m]$, we define:
\begin{align*}
\lambda^+_{i}(\xi, J) & :=  (\epr^+_i \tdot \xi) - \max_{j \in J} (\ep^+_{ij} + \xi_j) \ , \\
\lambda^-_{i}(\xi, J) & :=  (\epr^+_i \tdot \xi) - \max_{j \in J}( \ep^-_{ij} + \xi_j) \ ,
\end{align*}
where $\epm = (\ep_{ij})$.  When it is clear from the context, $\lambda^+_{i}(\xi, J)$ and $\lambda^-_{i}(\xi, J)$ will be simply denoted by $\lambda^+_i$ and $\lambda^-_i$.  
By Assumptions~\ref{assumption_A} and~\ref{ass:finite_coordinates}, we have $\epr^+_i \tdot \xi > \zero$. In contrast, $\max_{j \in J} (\ep^+_{ij} + \xi_j)$ and
$\max_{j \in J} (\ep^-_{ij} + \xi_j)$ may be equal to $-\infty$, in which case we use the convention $-(-\infty) = +\infty$, and so $\lambda^+_i = +\infty$ and
$\lambda^-_i = +\infty$, respectively. When $\max_{j \in J} (\ep^+_{ij} + \xi_j )$ and $\max_{j \in J} (\ep^-_{ij} + \xi_j )$ are finite, the scalars $\lambda_i^+ $
and $\lambda_i^-$ are non-negative real numbers. 

Let $x ^{\lambda} := \xi + \lambda e^J$.  Observe that $\lambda^+_i$ is the smallest $\lambda \geq 0$ such that
$\epr^+_i \tdot \xi = \ep^+_{ij} + x^{\lambda}_j$ for some $j \in J$.  Similarly, $\lambda^-$ is the smallest $\lambda
\geq 0$ such that $\epr^+_i \tdot \xi = \ep^-_{ij} + x^{\lambda}_j$ for some $j \in J$.  More precisely, we have
\begin{align*}
  \epr^+_i \tdot x^{\lambda} &= 
  \begin{cases}
    \epr^+_i \tdot \xi & \text{ if } 0  \leq \lambda \leq \lambda^+_i \\
    (\epr^+_i \tdot \xi) + \lambda  - \lambda^+_i   & \text{ if }   \lambda \geq \lambda^+_i 
  \end{cases} \\
\epr^-_i \tdot x^{\lambda} &= 
\begin{cases}
    \epr^-_i \tdot \xi & \text{ if } 0  \leq \lambda \leq \beta^-_i   \\
  (\epr^+_i \tdot \xi) + \lambda - \lambda^-_i       & \text{ if }  \lambda  \geq  \beta^-_i  \enspace ,\\
\end{cases} 
\end{align*}
where $\beta^-_i = \lambda^-_i + (\epr^-_i \tdot \xi) - (\epr^+_i \tdot \xi)$. In particular $\beta^-_i \leq
\lambda^-_i$ and equality holds when $i \in K$.  The evolution of $\epr^+_i \tdot (\xi + \lambda e^J)$ versus $\epr^-_i
\tdot (\xi + \lambda e^J)$ is visualized in Figure~\ref{fig:variation_with_lambda2}.

The endpoint $\xi'$  is either a breakpoint or a basic point. We will prove that it is a basic point if a new hyperplane node $\ient \not \in K$ ``appears'' in the tangent digraph.  In that case the index $\ient $  must belong to the following set:
\[
\enteringHyp(\xi, J) := \{i \in [m] \setminus K \mid \argmax(\epr^+_i \tdot \xi) \cap J = \emptyset \} \ .
\]
We shall see that $\xi'$ is a breakpoint if a hyperplane node $\ibreak \in K$ ``acquires'' a new arc, and thus become of degree $(2,1)$ or $(1,2)$. 
 Such a node $\ibreak$ must be an element of the following set:
\[
\breakHyp(\xi,J) := \{i \in K \mid \argmax(\epr^+_{i} \tdot \xi) \cap J = \emptyset \ \text{and} \ \argmax(\epr^-_{i} \tdot \xi) \cap J = \emptyset \} \ .
\]
We already mentioned that the notation $\ient$ (and so, $\enteringHyp(\xi, J)$) and $\ileaving$ is chosen by analogy
with the entering or leaving indices in the classical simplex method.  Note that the set $\breakHyp(\xi,J)$ does not
have any classical analog. It represents intermediate indices which shall be examined before a leaving index is found.

When this does not bear the risk of confusion, we simply use the notations $\breakHyp$ and $\enteringHyp$.

\begin{proposition}\label{prop:maximal_step}
Let $\{ \xi + \lambda e^J \mid 0 \leq  \lambda \leq \mu \}$ be an ordinary segment of a tropical edge $\tropE_K$. 
The following properties hold:
\begin{enumerate}[(i)]
\item\label{item:maximal_step1} the length $\mu$  of the segment is the greatest scalar $\lambda \geq 0$ satisfying the following conditions:
\begin{equation} \label{eq:maximal_step}
\begin{aligned}
\lambda & \leq \min(\lambda^+_{i}, \lambda^-_{i}) &&\text{for all} \  i \in \breakHyp \ , \\
\lambda & \leq \lambda^-_{i} &&\text{for all} \  i \in  \enteringHyp \ \text{such that} \ \lambda^-_i \leq \lambda^+_i .
\end{aligned}
\end{equation}
\item\label{item:maximal_step3} if  $\mu = \lambda^-_{\ient}$ for $\ient \in \enteringHyp$, then $\xi + \mu e^J$ is a basic point for the basis $K \cup \{\ient\}$.
\item\label{item:maximal_step4} if  $\mu = \min(\lambda^+_k, \lambda^-_k)$ for $\ibreak \in \breakHyp$, then $\xi + \mu e^J$ is a breakpoint. 
\end{enumerate}
\end{proposition}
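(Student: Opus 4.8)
The plan is to combine a direct analysis of the piecewise-linear functions $\lambda\mapsto\epr^+_i\tdot x^\lambda$ and $\lambda\mapsto\epr^-_i\tdot x^\lambda$ along the ray $x^\lambda:=\xi+\lambda e^J$ — whose shape is exactly the one recorded just before the statement, with breakpoints $\lambda^+_i$ and $\beta^-_i=\lambda^-_i+\epr^-_i\tdot\xi-\epr^+_i\tdot\xi$ — with two structural facts already available: the classification of tangent digraphs along a tropical edge (Proposition~\ref{prop:tangent_graph_interior_edge}) and the decomposition of $\tropE_K$ into ordinary segments with \emph{strictly nested} direction sets (Proposition~\ref{prop:tropical_edge}). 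Throughout I use that $\tropE_K=\bigcap_{i\in K}\tropH_i\cap\bigcap_{i\notin K}\tropH^{\geq}_i$, that every point of $\tropE_K$ has finite coordinates (Assumption~\ref{ass:finite_coordinates}), and that $e^J$ is feasible from $\xi$ in $\tropE_K$, hence — by Lemma~\ref{lemma:feasible_directions} — in each $\tropH_i$ for $i\in K$ and each $\tropH^{\geq}_i$ for $i\notin K$; the latter forces, for $i\in K$, that either $i\in\breakHyp$ or $\lambda^+_i=\lambda^-_i=0$, and, for $i\in\enteringHyp$, that $\lambda^-_i>0$.

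For (i), I would first establish the equivalence: for $\lambda\geq0$, one has $x^\lambda\in\tropE_K$ if and only if $\lambda$ satisfies \eqref{eq:maximal_step}. The implication ``$\Leftarrow$'' is a bounded case check from the piecewise-linear formulas, comparing $\lambda^+_i$ with $\beta^-_i$; the only constraints that can be active come from $i\in K\cap\breakHyp$ (keeping $x^\lambda$ on $\tropH_i$) and from $i\in\enteringHyp$ with $\lambda^-_i\leq\lambda^+_i$ (keeping $x^\lambda$ on the correct side of $\tropH_i$). For ``$\Rightarrow$'', suppose $\lambda$ violates one of the bounds in \eqref{eq:maximal_step}, with offending index $i$. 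If $\lambda^+_i\neq\lambda^-_i$, the same comparison yields $x^\lambda\notin\tropH_i$ (for $i\in\breakHyp$) or $x^\lambda\notin\tropH^{\geq}_i$ (for $i\in\enteringHyp$), contradicting $x^\lambda\in\tropE_K$. If instead $\lambda^+_i=\lambda^-_i$, then $x^{\lambda^+_i}$ would lie in $\tropE_K$ with the hyperplane node $i$ either becoming a \emph{new} hyperplane node strictly inside an ordinary segment, or acquiring two new incident coordinate nodes (a degree-$(2,2)$ node) — both impossible by the degree and cardinality constraints in Proposition~\ref{prop:tangent_graph_interior_edge}. This gives $\{\lambda\geq0:x^\lambda\in\tropE_K\}=[0,M']$, where $M'$ is the largest $\lambda$ meeting \eqref{eq:maximal_step}; since $[\xi,\xi']\subseteq\tropE_K$ we get $\mu\leq M'$, and since Proposition~\ref{prop:tropical_edge} makes $[\xi,\xi']$ a maximal straight piece of $\tropE_K$ in direction $e^J$ (the next direction set strictly contains $J$), no $x^{\mu+\varepsilon}$ lies in $\tropE_K$; hence $\mu=M'$.

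For (ii) and (iii), I would read off the type of the endpoint $\xi'=x^\mu$ from Proposition~\ref{prop:tangent_graph_interior_edge}, using $\xi'\in\tropE_K$. If $\mu=\lambda^-_{\ient}$ for $\ient\in\enteringHyp$ (so $\lambda^-_{\ient}\leq\lambda^+_{\ient}$, else the statement is vacuous), the piecewise-linear formulas give $\epr^+_{\ient}\tdot x^\mu=\epr^-_{\ient}\tdot x^\mu=\epr^+_{\ient}\tdot\xi>\zero$, so $\ient$ is a hyperplane node at $\xi'$; together with $K$ this yields at least $n$ hyperplane nodes, which by Proposition~\ref{prop:tangent_graph_interior_edge} is possible only in the case where $\xi'$ is a basic point, with hyperplane node set exactly $K\cup\{\ient\}$ — that is, $\xi'$ is the basic point for that basis. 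If instead $\mu=\min(\lambda^+_{\ibreak},\lambda^-_{\ibreak})$ for $\ibreak\in\breakHyp$ (the degenerate case $\lambda^+_{\ibreak}=\lambda^-_{\ibreak}$ being excluded in (i)), then at $\lambda=\mu$ the corresponding $\argmax$ of $\ibreak$ picks up an index of $J$ while the other $\argmax$ is unchanged, so $\ibreak$ has degree $(2,1)$ or $(1,2)$ at $\xi'$; by Proposition~\ref{prop:tangent_graph_interior_edge} this forces $\xi'$ to be a breakpoint.

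The main obstacle is the bookkeeping in part (i): getting the equivalence between \eqref{eq:maximal_step} and membership in $\tropE_K$ exactly right, and in particular disposing of the degenerate coincidences $\lambda^+_i=\lambda^-_i$ that would otherwise produce spurious or missing bounds. It is precisely here that Assumption~\ref{ass:general_position} enters, through Proposition~\ref{prop:tangent_graph_interior_edge}; once (i) is in place, (ii) and (iii) follow quickly from the same digraph classification.
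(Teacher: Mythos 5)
Your proposal is correct, and parts (ii)--(iii) are argued exactly as in the paper (reading the type of $\xi'$ off Proposition~\ref{prop:tangent_graph_interior_edge}); the difference lies in how you obtain (i). The paper proves only the implication ``\eqref{eq:maximal_step} implies $x^\lambda\in\tropE_K$'', then invokes Assumption~\ref{ass:finite_coordinates} (closedness of the cone and the forbidden limit point with $\zero$ coordinates) to get a greatest constrained value $\lambda^*$, and shows by a case analysis on the attained bound that $x^{\lambda^*}$ is a basic point or a breakpoint; this yields $\lambda^*=\mu$ and (ii)--(iii) in one stroke, the ties $\lambda^+_i=\lambda^-_i$ being excluded only at the single point $\lambda^*$ through the degree constraints (cf.\ Remark~\ref{remark:degree}). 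You instead prove the full pointwise equivalence between membership in $\tropE_K$ and \eqref{eq:maximal_step}, whose necessity direction is where the extra work sits: your tie-breaking argument (a tie would force a degree-$(2,2)$ hyperplane node, or an extra hyperplane node at a point admitting two opposite feasible directions, hence lying in the relative interior of an ordinary segment where the hyperplane node set is exactly $K$; in the entering case one may also invoke uniqueness of the basic point from Proposition~\ref{prop:def_basic_points}) is exactly what is needed and is sound. In exchange you dispense with the paper's explicit boundedness argument, pinning $\mu$ down by maximality of the ordinary segment; justify that step by the count of feasible directions at $\xi'$ rather than by the nesting in Proposition~\ref{prop:tropical_edge}, since your segment may be traversed against the orientation of that decomposition. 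Two small polish items: introduce the largest constrained value only after the equivalence and the fact $x^{\mu+\varepsilon}\notin\tropE_K$ show that it exists and equals $\mu$; and, as you note, (ii) is to be read with the binding-bound convention $\lambda^-_{\ient}\le\lambda^+_{\ient}$, as in the paper.
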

\begin{proof}
Let $x ^{\lambda} := \xi + \lambda e^J$ for all $\lambda \geq 0$. 

\begin{figure}[t] 
  \centering
  \begin{tikzpicture}
    \draw[->] (0,0) -- (5,0);
    \draw[->] (0,0) -- (0,4);
    
    \draw[very thick, red] (0,2) -- (4,2) -- (5,3);
    \draw (0,-0.1) -- (0,0.1);
    \node[below] at (0,-0.1) {$0$};
    \draw (4,-0.1) -- (4,0.1);
    \node[below] at (4,-0.1) {$\lambda^+_i$};
    \node[below] at (5,-0.1) {$\lambda$};

    \draw[very thick] (0,1) -- (2,1) -- (5,4);
    \draw (2,-0.1) -- (2,0.1);
    \node[below] at (2,-0.1) {$\beta^-_i$};
    \draw (0,-0.1) -- (0,0.1);
    \node[below] at (0,-0.1) {$0$};
    \draw (3,-0.1) -- (3,0.1);
    \node[below] at (3,-0.1) {$\lambda^-_i$};
    \node[below] at (5,-0.1) {$\lambda$};

    \node[anchor = east] at (0,2) {$\epr^+_i \tdot \xi$};
    \node[anchor = east] at (0,1) {$\epr^-_i \tdot \xi$};

    \begin{scope}[shift={( 8,0)}]
    \draw[->] (0,0) -- (5,0);
    \draw[->] (0,0) -- (0,4);
    
    \draw[very thick, red] (0,2) -- (1,2) -- (3,4);
    \draw (0,-0.1) -- (0,0.1);
    \node[below] at (0,-0.1) {$0$};
    \draw (1,-0.1) -- (1,0.1);
    \node[below] at (1,-0.1) {$\lambda^+_i$};
    \node[below] at (5,-0.1) {$\lambda$};

    \draw[very thick] (0,1) -- (2,1) -- (5,4);
    \draw (2,-0.1) -- (2,0.1);
    \node[below] at (2,-0.1) {$\beta^-_i$};
    \draw (0,-0.1) -- (0,0.1);
    \node[below] at (0,-0.1) {$0$};
    \draw (3,-0.1) -- (3,0.1);
    \node[below] at (3,-0.1) {$\lambda^-_i$};
    \node[below] at (5,-0.1) {$\lambda$};

    \node[anchor = east] at (0,2) {$\epr^+_i \tdot \xi$};
    \node[anchor = east] at (0,1) {$\epr^-_i \tdot \xi$};

    \draw[dashdotted] (0,2) -- (5,2);
    \end{scope}
  \end{tikzpicture}
\caption{Evolution of $\epr^+_i \tdot (\xi + \lambda e^J)$ (in red) and $\epr^-_i \tdot (\xi + \lambda e^J)$ (in black) with $\lambda \geq 0$ for $i \in \enteringHyp$ and $\lambda^-_i < \lambda^+_i$ (left) or $\lambda^-_i > \lambda^+_i$ (right).}
\label{fig:variation_with_lambda2}
\end{figure}
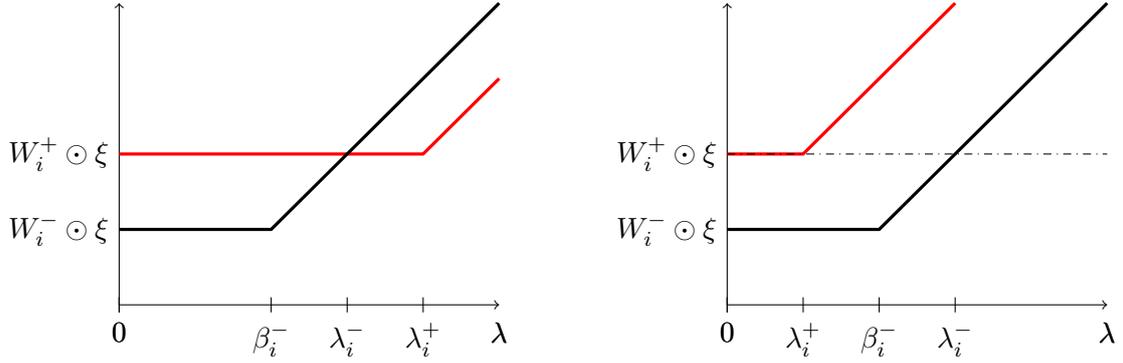

We claim that $x^{\lambda}$ belongs to $\tropE_K$ if $\lambda$ satisfies~\eqref{eq:maximal_step}. 
To that end, we first show that $x^{\lambda} \in \tropH_i$ for $i \in K$.
Consider an  $i \in \breakHyp$. Then $\beta^-_i = \lambda^-_i$. Therefore, for all $0 \leq \lambda  \leq \min(\lambda^+_i, \lambda^-_i)$ we have $x^{\lambda} \in \tropH_i$ since:
\[
 \epr^+_i \tdot x^{\lambda} = \epr^+_i \tdot \xi  = \epr^-_i \tdot \xi  = \epr^-_i \tdot x^{\lambda} \,.
\]
Let $i \in K \setminus \breakHyp$.  Then by Lemma~\ref{lemma:feasible_directions}, $\argmax(\epr^+_i \tdot \xi) \cap J$ and $\argmax(\epr^-_i \tdot \xi) \cap J$ are both non-empty. Thus $\lambda^+_i = \lambda^-_i = \beta^-_i = 0$. Therefore, $x^{\lambda} \in \tropH_i$ for all $\lambda \geq 0$ since in this case:
\[
 \epr^+_i \tdot x^{\lambda} = (\epr^+_i \tdot \xi) + \lambda = \epr^-_i \tdot x^{\lambda} \,.
\]

 We now examine the half-spaces $\tropH^\geq_i$ where $i \in [m] \setminus K$.
If $i \not \in \enteringHyp$ then $\argmax(\epr^+_i \tdot \xi) \cap J \neq \emptyset$. Consequently, $\lambda^+_i = 0$. Thus $x^\lambda \in \tropH^{\geq}_i$ for all $\lambda \geq 0$ as we have:
\[
\epr^+_i \tdot x^{\lambda} = (\epr^+_i \tdot \xi) + \lambda \geq \max(\epr^-_i \tdot \xi, (\epr^+_i \tdot \xi) + \lambda - \lambda^-_i) = \epr^-_i \tdot x^{\lambda} \,.
\]
If $i \in \enteringHyp$ and $ 0 \leq  \lambda \leq \min(\lambda^+_i, \lambda^-_i)$, then $x^\lambda \in \tropH^{\geq}_i$. Indeed :
\[
\epr^+_i \tdot x^{\lambda} = \epr^+_i \tdot \xi  \geq  \max(\epr^-_i \tdot \xi, (\epr^+_i \tdot \xi) + \lambda - \lambda^-_i ) = \epr^-_i \tdot x^{\lambda} \,.
\]
Now if further $\lambda^+_i < \lambda^-_i$, then, for $\lambda \geq \lambda^+_{i}$,
 we have 
\[
\epr^+_i \tdot x^{\lambda}  = (\epr^+_i \tdot \xi) + \lambda - \lambda^+_i  \geq \max(\epr^-_i \tdot \xi,(\epr^+_i \tdot \xi) + \lambda- \lambda^-_i ) = \epr^-_i \tdot x^{\lambda}  \ .
\]
We conclude that if $i \in \enteringHyp$ and $\lambda^+_i < \lambda^-_i$ then $x^\lambda \in \tropH^{\geq}_i$ for all $\lambda \geq 0$.

Second, we claim that the solution set of the inequalities~\eqref{eq:maximal_step} admits a greatest element $\lambda^* \in \R$. By contradiction, suppose that $x^{\lambda} \in \tropE_K$ for all $\lambda \geq 0$. Recall that $e^J$ and $ - e^{[n+1] \setminus J}$ coincide as elements of $\tropProj^n$. Consequently the half-ray $ \{\xi - \lambda e^{[n+1] \setminus J} \mid \lambda \geq 0 \}$ is contained in $\tropE_K$, and thus in $\tropC$. Since $\tropC$ is closed, it contains the point $y\in \trop^{n+1}$ defined by $y_j = \xi_j$ if $j \in J$ and $y_j = \zero$ otherwise. As $J \subsetneq [n+1]$, this contradicts Assumption~\ref{ass:finite_coordinates}.

Third, we claim that $\lambda^* = \mu$. To prove the claim is sufficient to show that $x^{\lambda^*}$ is either a breakpoint or a basic point of $\tropE_K$. We distinguish three cases:
\begin{enumerate}[(a)]
\item\label{item:case1} $\lambda^* = \lambda^-_{i}$ for some $i \in \enteringHyp$. Then $\epr^-_{i} \tdot x^{\lambda^*} = \epr^+_i \tdot \xi$. Moreover $\lambda^* \leq \lambda^+_{i}$ and thus $\epr_{i}^+ \tdot x^{\lambda^*} = \epr_{i}^+ \tdot \xi$.
This implies that $\epr_{i}^+ \tdot x^{\lambda^*} = \epr_{i}^- \tdot x^{\lambda^*} > \zero$.
As a consequence, $ i \not \in K$ is a hyperplane node in the tangent graph $\graph_{x^{\lambda^*}}$. By Proposition~\ref{prop:tangent_graph_interior_edge}, we conclude that $x^{\lambda^*}$ is a basic point for the set $K \cup \{i \}$.

\item $\lambda^* =  \lambda^+_{i} \leq  \lambda^-_{i} $ for some $i \in \breakHyp$. Then, observe that:
\begin{equation}
\argmax(\epr^+_i \tdot x^{\lambda^*} ) = \argmax(\epr^+_i \tdot \xi) \cup \argmax_{j \in J} ( \ep^+_{ij} +\xi_j) \ .\label{eq:maximal_step_proof2}
\end{equation}
The two sets on the right-hand side of~\eqref{eq:maximal_step_proof2} are non-empty and disjoint, since  $i \in \breakHyp$. Thus $\argmax(\epr^+_i \tdot x^{\lambda^*} )$ contains at least two distinct elements . Moreover, $x^{\lambda^*} \in \tropE_K$ by the discussion above, thus $i \in K$ appears as an hyperplane node in $\digraph_{x^{\lambda^*}}$.  Consequently, the hyperplane node $i$ has at least 2 incoming arcs in $\digraph_{x^{\lambda^*}}$. We deduce by Proposition~\ref{prop:tangent_graph_interior_edge} that the degree of the hyperplane node $i$ in $\digraph_{x^{\lambda^*}}$ is $(2,1)$, and that $x^{\lambda^*}$ is a breakpoint. 
\item $\lambda^* =  \lambda^-_{i} \leq  \lambda^+_{i} $ for some $i \in \breakHyp$. By the same argument as above, $\argmax(\epr^-_i \tdot x^{\lambda^*} )$ contains at least two distinct elements. This implies that $x^{\lambda^*}$ is a breakpoint and that the hyperplane node $i$ has degree $(1,2)$.
\end{enumerate}
Note that the arguments above also prove~\eqref{item:maximal_step3} and~\eqref{item:maximal_step4}.
\end{proof}

\begin{remark}\label{remark:degree}
When $\xi + \mu e^J$ is a breakpoint, the proof of Proposition~\ref{prop:maximal_step} ensures that the hyperplane node $\ibreak$ in the tangent digraph $\digraph_{\xi + \mu e^J}$ has degree $(2,1)$ if $\mu=\lambda^+_\ibreak$ or $(1,2)$ if $\mu = \lambda^-_\ibreak$. 
In particular, this proves that $\mu$ is equal to only one scalar among the $\lambda^-_i$, $\lambda^+_k$ and $\lambda^-_k$, where $i \in \enteringHyp$ and $k \in \breakHyp$.
\end{remark}

\begin{example}
  We now have all the ingredients required to perform a tropical pivot. Feasible directions are given by Proposition~\ref{prop:tangent_graph_interior_edge}, while Proposition~\ref{prop:maximal_step} provides the lengths of ordinary segments and the stopping criterion.
  
Let us illustrate this on our running example. We start from the basic point $(4,4,2)$ (\ie\ the point $(4,4,2,0)$ in $\tropProj^3$) given by $\basis = \{ \text{\ref{eq:running_example_eq2}},\text{\ref{eq:running_example_eq3}}, \text{\ref{eq:running_example_eq1}} \}$, and we move along the edge
$\tropE_K$, where $K = \{\text{\ref{eq:running_example_eq2}},\text{\ref{eq:running_example_eq3}}\}$. 
The tangent digraph at $(4,4,2)$ is depicted in the bottom right of Figure~\ref{fig:tangent_graphs}. By Proposition~\ref{prop:tangent_graph_interior_edge}~(C\ref{item:shape1}), the initial direction is $-e^{\{1,2,3\}}$, \ie\ $J = \{ 4\}$. By definition, $\breakHyp$ is formed by the hyperplane nodes which are not adjacent to the coordinate node  $4$ in the tangent digraph. Hence, $\breakHyp =  \{\text{\ref{eq:running_example_eq2}},\text{\ref{eq:running_example_eq3}}\}$. Moreover, in the homogeneous setting, the inequalities~\ref{eq:running_example_eq4} and~\ref{eq:running_example_eq5} read
\begin{align*}
x_2 & \geq x_4 \\
x_1 & \geq \max(x_4, x_2-3) 
\end{align*}
In both of them, the maximum in the left-hand side is reduced to one term, and it does not involve $x_4$. Thus, $\enteringHyp =  \{\text{\ref{eq:running_example_eq4}},\text{\ref{eq:running_example_eq5}}\}$. The reader can verify that:
\begin{align*}
\lambda^+_{\mathref{eq:running_example_eq2}} & = 3 - 0 = 3 & 
\lambda^-_{\mathref{eq:running_example_eq2}} & = 3 - (-\infty) = +\infty \\
\lambda^+_{\mathref{eq:running_example_eq3}} & = 2 - (-\infty) = +\infty & 
\lambda^-_{\mathref{eq:running_example_eq3}} & = 2 - 0 = 2 \\
\lambda^+_{\mathref{eq:running_example_eq4}} & = 4 - (-\infty) = +\infty & 
\lambda^-_{\mathref{eq:running_example_eq4}} & = 4 - 0 = 4 \\
\lambda^+_{\mathref{eq:running_example_eq5}} & = 4 - (-\infty) = +\infty & 
\lambda^-_{\mathref{eq:running_example_eq5}} & = 4 - 0 = 4 
\end{align*}
As a result, the length of the initial ordinary segment is $\mu = 2$, given by $\mu = \lambda^-_{\mathref{eq:running_example_eq3}}  \leq \lambda^+_{\mathref{eq:running_example_eq3}}$. As $\mathref{eq:running_example_eq3} \in \breakHyp$,  the  point $(4,4,2) - 2 e^{\{1,2,3\}} = (2,2,0)$ is a breakpoint. 

The next feasible direction is $-e^{\{1,2\}}$ as $J = \{3,4\}$. We still have  $\enteringHyp =  \{\text{\ref{eq:running_example_eq4}},\text{\ref{eq:running_example_eq5}}\}$ but now  $\breakHyp =  \{\text{\ref{eq:running_example_eq2}}\}$. The length of this ordinary segment is $\mu = 1 = \lambda^+_{\text{\ref{eq:running_example_eq2}}}$.
Consequently, we reach the breakpoint $(1,1,0) = (2,2,0) - 1 e^{\{1,2\}}$, where the next feasible direction,  $-e^{\{2\}}$, is given by $J = \{ 1,3,4 \}$. The set $\breakHyp$ is now empty and $\enteringHyp = \{ \text{\ref{eq:running_example_eq5}}\}$. Clearly, $\mu = 1 = \lambda^-_{  \text{\ref{eq:running_example_eq5}} } $. As $\text{\ref{eq:running_example_eq5}} \in \enteringHyp$, the next endpoint $(1,0,0) = (1,1,0) - 1e^{\{ 2 \}}$ is a basic point. 

\end{example}

\subsection{Efficient implementation of the pivoting operation}\label{subsec:efficient_pivot}
Our implementation of the pivoting operation relies on the incremental update of the tangent digraph along the tropical
edge.  This avoids computing from scratch the tangent digraph at each breakpoint, in which case the time complexity of
the pivoting operation would be naively in $O(n^2 m)$.

In the previous section we described the ``travel'' from a given point $\xi$ into the direction given by $J$ to the next
point, called $\xi'$.  Our key observation is that the tangent digraph is constant in the open segment $\oiv{\xi, \xi'}$
and that it ``acquires'' a new arc or a new hyperplane node when the endpoint $ \xi'$ is reached.  This is made precise
in the lemma below and the subsequent proposition.

\begin{lemma}\label{lemma:tangent_graph_is_constant}
Let $[\xi,\xi'] = \{ \xi + \lambda e^J \mid 0  \leq \lambda \leq \mu \}$ be an ordinary segment of $\tropE_K$. Every point in $\oiv{\xi, \xi'}$ has the same tangent digraph $\digraph_{\oiv{\xi, \xi'}}$, which is equal to the intersection of $\digraph_\xi$ and $\digraph_{\xi'}$.
\end{lemma}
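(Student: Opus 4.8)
The plan is to exploit one structural remark: at a point $x\in\R^{n+1}$ with finite coordinates a tangent digraph is completely determined by its set of hyperplane nodes together with, for each hyperplane node $i$, the pair of index sets $\bigl(\argmax(\epr_i^+\tdot x),\argmax(\epr_i^-\tdot x)\bigr)$. Indeed the incoming arcs at $i$ are precisely the $(j,i)$ with $j\in\argmax(\epr_i^+\tdot x)$, the outgoing arcs are precisely the $(i,j)$ with $j\in\argmax(\epr_i^-\tdot x)$, and by Assumption~\ref{assumption_B} together with the finiteness of $x$ these two index sets are disjoint and index only nonzero entries of $\epr_i$; the coordinate-node set is simply $[n+1]$. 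So I would prove the lemma by computing, for $x^\lambda:=\xi+\lambda e^J$ and $\lambda\in[0,\mu]$, the hyperplane-node set and the pairs $\bigl(\argmax(\epr_i^+\tdot x^\lambda),\argmax(\epr_i^-\tdot x^\lambda)\bigr)$, and then reading off the three digraphs of the statement.

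First I would settle the node sets. Every $x^\lambda$ has finite coordinates by Assumption~\ref{ass:finite_coordinates}, so all digraphs in question have coordinate-node set $[n+1]$. Since $[\xi,\xi']\subseteq\tropE_K\subseteq\bigcap_{i\in K}\tropH_i$, Assumption~\ref{assumption_A} forces $\epr_i^+\tdot x^\lambda=\epr_i^-\tdot x^\lambda>\zero$ for every $i\in K$ and every $\lambda\in[0,\mu]$, so each $i\in K$ is a hyperplane node at every $x^\lambda$; for $0<\lambda<\mu$ the point $x^\lambda$ lies in the relative interior of the ordinary segment, so case~(C\ref{item:shape2}) of Proposition~\ref{prop:tangent_graph_interior_edge} gives that the hyperplane-node set of $\digraph_{x^\lambda}$ is exactly $K$. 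By the trichotomy of Proposition~\ref{prop:tangent_graph_interior_edge}, the hyperplane-node set of $\digraph_\xi$ is $K$ or $K\cup\{\ileaving\}$ with $\ileaving\notin K$, that of $\digraph_{\xi'}$ is $K$ or $K\cup\{\ient\}$ with $\ient\notin K$, and if both endpoints are basic points then $\mu>0$ forces $\xi\neq\xi'$ and hence $\ileaving\neq\ient$; thus the common hyperplane-node set of $\digraph_\xi$ and $\digraph_{\xi'}$ is $K$, and $\digraph_{\oiv{\xi,\xi'}}$ and $\digraph_\xi\cap\digraph_{\xi'}$ have the same node set.

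Second I would run the metric computation of Section~\ref{sec:lenght_segment} for $i\in K$. If $i\in K\setminus\breakHyp$, the third part of Lemma~\ref{lemma:feasible_directions} applied to the direction $e^J$ (feasible from $\xi$ in each $\tropH_i$, $i\in K$, since $[\xi,\xi']\subseteq\tropE_K\subseteq\tropH_i$) gives that $\argmax(\epr_i^+\tdot\xi)\cap J$ and $\argmax(\epr_i^-\tdot\xi)\cap J$ are both nonempty, hence $\lambda_i^+=\lambda_i^-=\beta_i^-=0$ and, for all $\lambda\in(0,\mu]$, $\argmax(\epr_i^+\tdot x^\lambda)=\argmax(\epr_i^+\tdot\xi)\cap J$ and $\argmax(\epr_i^-\tdot x^\lambda)=\argmax(\epr_i^-\tdot\xi)\cap J$. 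If $i\in\breakHyp$, then $\argmax(\epr_i^+\tdot\xi)\cap J=\argmax(\epr_i^-\tdot\xi)\cap J=\emptyset$, so $\lambda_i^+,\lambda_i^->0$, and Proposition~\ref{prop:maximal_step}\eqref{item:maximal_step1} gives $\mu\le\min(\lambda_i^+,\lambda_i^-)$; since $\beta_i^-=\lambda_i^-$ here (because $\epr_i^-\tdot\xi=\epr_i^+\tdot\xi$), the formulas of Section~\ref{sec:lenght_segment} yield, for $\lambda\in[0,\mu)$, $\argmax(\epr_i^+\tdot x^\lambda)=\argmax(\epr_i^+\tdot\xi)$ and $\argmax(\epr_i^-\tdot x^\lambda)=\argmax(\epr_i^-\tdot\xi)$. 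In both cases the pair $\bigl(\argmax(\epr_i^+\tdot x^\lambda),\argmax(\epr_i^-\tdot x^\lambda)\bigr)$ is independent of $\lambda\in(0,\mu)$; combined with the previous paragraph this shows that $\digraph_{x^\lambda}$ is the same for all $\lambda\in(0,\mu)$, which we denote $\digraph_{\oiv{\xi,\xi'}}$.

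Finally, by the structural remark it remains to verify, for each $i\in K$ and each sign, that $\argmax(\epr_i^\pm\tdot x^\lambda)=\argmax(\epr_i^\pm\tdot\xi)\cap\argmax(\epr_i^\pm\tdot x^\mu)$ for $0<\lambda<\mu$. For $i\in K\setminus\breakHyp$ this is immediate, since $\argmax(\epr_i^\pm\tdot x^\mu)=\argmax(\epr_i^\pm\tdot\xi)\cap J=\argmax(\epr_i^\pm\tdot x^\lambda)\subseteq\argmax(\epr_i^\pm\tdot\xi)$. For $i\in\breakHyp$ it follows from $\argmax(\epr_i^\pm\tdot x^\lambda)=\argmax(\epr_i^\pm\tdot\xi)$ once one checks $\argmax(\epr_i^\pm\tdot\xi)\subseteq\argmax(\epr_i^\pm\tdot x^\mu)$, which comes from evaluating the Section~\ref{sec:lenght_segment} formulas at $\lambda=\mu$: $\argmax(\epr_i^\pm\tdot x^\mu)$ equals $\argmax(\epr_i^\pm\tdot\xi)$ when $\mu$ lies strictly below the relevant threshold, and equals $\argmax(\epr_i^\pm\tdot\xi)$ enlarged by $\argmax_{j\in J}(\ep_{ij}^\pm+\xi_j)$ when $\mu$ equals it. I expect this endpoint bookkeeping to be the real work: one has to check that $\digraph_{\xi'}$ differs from $\digraph_{\oiv{\xi,\xi'}}$ only by the extra hyperplane node $\ient$ appearing when $\mu=\lambda_\ient^-$ for some $\ient\in\enteringHyp$, or by the single extra arc at some $\ibreak\in\breakHyp$ appearing when $\mu\in\{\lambda_\ibreak^+,\lambda_\ibreak^-\}$ --- exactly as in Proposition~\ref{prop:maximal_step} --- so that intersecting with $\digraph_\xi$ deletes precisely that feature; here the facts that $\ileaving\neq\ient$ and that the $\argmax$ sets only grow as $\lambda$ crosses a threshold are what make the intersection come out right.
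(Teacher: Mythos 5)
Your proposal is correct and follows essentially the same route as the paper's proof: establish that the common hyperplane-node set is $K$ via the trichotomy of Proposition~\ref{prop:tangent_graph_interior_edge}, then compute $\argmax(\epr_i^{\pm}\tdot x^{\lambda})$ separately for $i\in\breakHyp$ (constant, equal to $\argmax(\epr_i^{\pm}\tdot\xi)$, contained in $\argmax(\epr_i^{\pm}\tdot\xi')$) and $i\in K\setminus\breakHyp$ (equal to $\argmax(\epr_i^{\pm}\tdot\xi)\cap J=\argmax(\epr_i^{\pm}\tdot\xi')$ for all $\lambda>0$), and read off that the arc set in the open segment is the intersection of the arc sets at the endpoints. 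The only remark is that your closing worry about cataloguing exactly how $\digraph_{\xi'}$ differs from $\digraph_{\oiv{\xi,\xi'}}$ is not needed for this lemma (that is the content of Proposition~\ref{prop:tangent_graph_update}, proved afterwards); the argmax identities you already verified suffice.
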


\begin{proof}
Let $x^\lambda := \xi + \lambda e^J$. 
By Proposition~\ref{prop:tangent_graph_interior_edge}, the hyperplane node set of $\digraph_{x^\lambda}$ for $\lambda \in \oiv{0, \mu}$ is equal to $K$. If $\xi$ and $\xi'$ are both basic points, the sets of hyperplane nodes in their tangent digraphs are respectively of the form $K \cup \{ \ileaving \}$ and $K \cup \{ \ient \}$, where $\ileaving, \ient \not \in K$ and $\ileaving \neq \ient$. If one of the two endpoints, say $\xi$, is a breakpoint, the hyperplane node set of its tangent digraph is $K$, while the hyperplane node set of $\digraph_{\xi'}$ contains $K$. In all cases, the intersection of the hyperplane node sets of $\digraph_{\xi}$ and $\digraph_{\xi'}$ is equal to $K$. Moreover, the coordinate node set of $\digraph_x$ is equal to $[n+1]$ for all $x \in [\xi, \xi']$.

Let $i \in \breakHyp$. If $0 < \lambda < \mu$, then in particular $\lambda < \min(\lambda^+_i, \lambda^-_i)$ by Proposition~\ref{prop:maximal_step}. Hence,
\begin{equation}
\argmax(\epr^\pm_i \tdot x^{\lambda}) = \argmax(\epr^\pm_i \tdot \xi) \; . \label{eq:argmax1}
\end{equation}
Besides, $\argmax(\epr^\pm_i \tdot \xi') = \argmax(\epr^\pm_i \tdot x^\mu)$ is a superset of $\argmax(\epr^\pm_i \tdot \xi)$, and the inclusion is strict when $\mu$ is equal to the corresponding scalar $\lambda^+_i$ or $\lambda^-_i$.

Similarly, let $i \in K \setminus \breakHyp$.  By Lemma~\ref{lemma:feasible_directions}, $\argmax(\epr^+_i \tdot \xi) \cap J$ and $\argmax(\epr^-_i \tdot \xi) \cap J$ are both non-empty. Moreover, for all $ \lambda > 0$, we have:
\begin{equation}
\argmax(\epr^\pm_i \tdot x^{\lambda}) = \argmax(\epr^\pm_i \tdot \xi) \cap J \; . \label{eq:argmax2}
\end{equation}
In particular, $\argmax(\epr^\pm_i \tdot \xi') = \argmax(\epr^\pm_i \tdot \xi) \cap J$.

Equations~\eqref{eq:argmax1} and~\eqref{eq:argmax2} ensure that $\argmax(\epr^\pm_i \tdot x^{\lambda}) = \argmax(\epr^\pm_i \tdot \xi) \cap \argmax(\epr^\pm_i \tdot \xi')$ for all $i \in K$ and $\lambda \in \oiv{0, \mu}$. This shows that the arc set of $\digraph_{x^\lambda}$ is precisely the intersection of the arc sets of $\digraph_{\xi}$ and $\digraph_{\xi'}$. 
\end{proof}

\begin{figure}[t]
\begin{center}
\begin{tikzpicture}[myedge/.style={->,thick,shorten <=2pt,shorten >=2pt, >=stealth'},
hyp/.style={draw, thick, minimum size = 3ex, inner sep=2pt},
var/.style={draw, circle,thick, minimum size = 3ex, inner sep=2pt},scale=0.75]

\definecolor{mygreen}{RGB}{93,189,52}
\node[hyp] (h) at (0,0) {$k$};
\node[var] (v1) at (-1,1.8) {};
\node[var] (v2) at (1,1.8) {};
\node[var] (v3) at (0,-1.5) {};
\draw[myedge,->] (v1) -- node[anchor=east] {$a_\leaving$} (h);
\draw[myedge,->] (v2) -- node[anchor=west] {$a_\ent$} (h);
\draw[myedge,->] (h) -- (v3);

\draw[black!70,dashed,thick,rounded corners=0.35cm] ($(v1.south east) + (0.15,-0.4)$) -- ($(v1.west) + (-0.3,-0.3)$) -- (-3,3.5) -- (-0.25,3.5) -- cycle;
\draw[black!70,dashed,thick,rounded corners=0.35cm] ($(v2.south west) + (-0.15,-0.4)$) -- ($(v2.east) + (0.3,-0.3)$) -- (3,3.5) -- (0.25,3.5) -- cycle;
\draw[black!70,dashed,thick,rounded corners=0.35cm] ($(v3.north) + (0,0.5)$) -- (-2,-3.5) -- (2,-3.5) -- cycle;

\node at (0,-4.5) {$\digraph_{\xi'}$};

 \begin{scope}[shift={(-7,0)}]
   \node[hyp] (h) at (0,0) {$k$};
 \node[var] (v1) at (-1,1.8) {};
 \node[var] (v2) at (1,1.8) {};
 \node[var] (v3) at (0,-1.5) {};
 \draw[myedge,->] (v1) -- node[anchor=east] {$a_\leaving$} (h);
 \draw[myedge,->] (h) -- (v3);

 \draw[black!70,dashed,thick,rounded corners=0.35cm] ($(v1.south east) + (0.15,-0.4)$) -- ($(v1.west) + (-0.3,-0.3)$) -- (-3,3.5) -- (-0.25,3.5) -- cycle;
 \draw[black!70,dashed,thick,rounded corners=0.35cm] ($(v2.south west) + (-0.15,-0.4)$) -- ($(v2.east) + (0.3,-0.3)$) -- (3,3.5) -- (0.25,3.5) -- cycle;
 \draw[black!70,dashed,thick,rounded corners=0.35cm] ($(v3.north) + (0,0.5)$) -- (-2,-3.5) -- (2,-3.5) -- cycle;

 \draw[mygreen,very thick,rounded corners=0.55cm] (3.5,3.8) -- (-0.1,3.8) -- ($(h.north east) + (-0.2,0.3)$) -- ($(h.north east) + (1.4,0.3)$) -- cycle;

\node at (0,-4.5) {$\digraph_{\oiv{\xi, \xi'}}$};

\node[mygreen] at (1.5,0.3) {$J$};
 \end{scope}

 \begin{scope}[shift={(+7,0)}]
   \node[hyp] (h) at (0,0) {$k$};
 \node[var] (v1) at (-1,1.8) {};
 \node[var] (v2) at (1,1.8) {};
 \node[var] (v3) at (0,-1.5) {};
 \draw[myedge,->] (v2) -- node[anchor=west] {$a_\ent$} (h);
 \draw[myedge,->] (h) -- (v3);

 \draw[black!70,dashed,thick,rounded corners=0.35cm] ($(v1.south east) + (0.15,-0.4)$) -- ($(v1.west) + (-0.3,-0.3)$) -- (-3,3.5) -- (-0.25,3.5) -- cycle;
 \draw[black!70,dashed,thick,rounded corners=0.35cm] ($(v2.south west) + (-0.15,-0.4)$) -- ($(v2.east) + (0.3,-0.3)$) -- (3,3.5) -- (0.25,3.5) -- cycle;
 \draw[black!70,dashed,thick,rounded corners=0.35cm] ($(v3.north) + (0,0.5)$) -- (-2,-3.5) -- (2,-3.5) -- cycle;

 \draw[orange,very thick,rounded corners=0.6cm] (3.2,3.8) -- (0.1,3.8) -- ($(h.north west) + (-0.1,0)$) --  (-2.5,-3.9) -- (2.5,-3.9) -- cycle;

\node at (0,-4.5) {$\digraph_{\oiv{\xi', \xi''}}$};

\node[orange] at (-1.3,-0.3) {$J'$};
\end{scope}

\end{tikzpicture}
\end{center}
\caption{Illustration of Proposition~\ref{prop:tangent_graph_update}~\eqref{item:interior_to_breakpoint} and Remark~\ref{remark:next_direction}, with a sequence of tangent digraphs around a breakpoint $\xi'$ between two consecutive segments $[\xi, \xi'] \cup [\xi', \xi'']$. The direction of $[\xi, \xi']$, from $\xi$ to $\xi'$, is given by the set of coordinate nodes $J$, indicated in green. The direction of the second segment, from $\xi'$ to $\xi''$, is governed by $J'$ depicted in orange.}\label{fig:tangent_graph_update}
\end{figure}
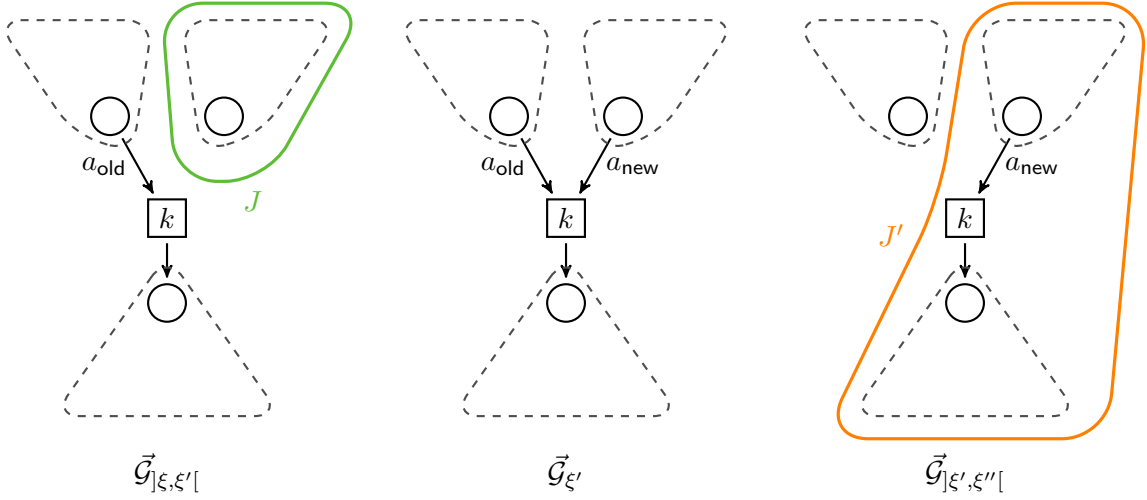

\begin{proposition}\label{prop:tangent_graph_update}
Let $[\xi,\xi'] = \{ \xi + \lambda e^J \mid 0  \leq \lambda \leq \mu \}$ be an ordinary segment of $\tropE_K$.
\begin{enumerate}[(i)]
\item\label{item:basic_point_to_interior}if $\xi$ is a basic point, \ie\ $\xi = x^{K \cup \{\ileaving\}}$ for a given $\ileaving \not \in K$, then 
\[
\digraph_{\oiv{\xi, \xi'}} = \digraph_\xi \setminus \{\ileaving\} \ .
\]
\item\label{item:interior_to_breakpoint}
if $\xi'$ is a breakpoint and $\ibreak$ the hyperplane node of $\digraph_{\xi'}$ with degree $(2,1)$ or $(1,2)$, then 
\[
\digraph_{\xi'} = \digraph_{\oiv{\xi, \xi'}} \cup \{a_\ent\} \ ,
\]
where $a_\ent$ is an arc between $\ibreak$ and the unique element of $\argmax_{j \in J} (|\ep_{\ibreak j}| + \xi_j)$. 

Moreover, if $[\xi', \xi'']$ is the next ordinary segment in $\tropE_K$, then 
\[
\digraph_{\oiv{\xi', \xi''}} = \digraph_{\xi'} \setminus \{a_\leaving\} \ .
\]
where $a_\leaving$ is the unique arc incident to $\ibreak$ with the same orientation as $a_\ent$ in $\digraph_{\xi'}$.
 \end{enumerate}
 \end{proposition}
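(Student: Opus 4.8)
The plan is to reduce both parts to a count of arcs, combining the constancy result of Lemma~\ref{lemma:tangent_graph_is_constant} with the structural trichotomy of Proposition~\ref{prop:tangent_graph_interior_edge}. First I would record, from Proposition~\ref{prop:tangent_graph_interior_edge} and the forest identity~\eqref{eq:acyclic_graph}, the number of arcs of a tangent digraph along $\tropE_K$: in all three cases the coordinate node set is $[n+1]$, while a digraph of type (C\ref{item:shape1}) has hyperplane node set $K \cup \{\ileaving\}$ and, being a spanning tree, has $2n$ arcs; one of type (C\ref{item:shape2}) has hyperplane node set $K$ and, being a forest with two components, has $2n-2$ arcs; and one of type (C\ref{item:shape3}) has hyperplane node set $K$ and, being a spanning tree with a unique node $\ibreak$ of degree $(2,1)$ or $(1,2)$ and all other hyperplane nodes of degree $(1,1)$, has $2n-1$ arcs. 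By Lemma~\ref{lemma:tangent_graph_is_constant}, $\digraph_{\oiv{\xi,\xi'}}$ is a subgraph of both $\digraph_\xi$ and $\digraph_{\xi'}$ and is itself of type (C\ref{item:shape2}).

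For~\eqref{item:basic_point_to_interior}: since the hyperplane node set of $\digraph_{\oiv{\xi,\xi'}}$ is $K$, this subgraph of $\digraph_\xi$ does not involve the node $\ileaving$, hence it is a subgraph of $\digraph_\xi \setminus \{\ileaving\}$. As $\ileaving$ has degree $(1,1)$ in the spanning tree $\digraph_\xi$, the graph $\digraph_\xi \setminus \{\ileaving\}$ has $2n-2$ arcs and the same node set as $\digraph_{\oiv{\xi,\xi'}}$, which also has $2n-2$ arcs; therefore the two graphs coincide.

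For the first claim of~\eqref{item:interior_to_breakpoint}: the same reasoning shows that $\digraph_{\oiv{\xi,\xi'}}$ is $\digraph_{\xi'}$ with exactly one arc removed, and since every hyperplane node of $\digraph_{\xi'}$ other than $\ibreak$ already has degree $(1,1)$, that arc must be incident to $\ibreak$ and reduce its degree to $(1,1)$; assuming $\ibreak$ has degree $(2,1)$ in $\digraph_{\xi'}$, so that $a_\ent$ is incoming (the degree $(1,2)$ case, where $a_\ent$ is outgoing, is symmetric), the removed arc is one of the two incoming arcs of $\ibreak$. To name it, I would invoke Remark~\ref{remark:degree} to know that $\mu = \lambda^+_{\ibreak}$ and $\mu < \lambda^-_{\ibreak}$, combine this with $\ibreak \in \breakHyp$ (so $\argmax(\epr^+_{\ibreak} \tdot \xi) \cap J = \emptyset$) and the identity
\[
\argmax(\epr^+_{\ibreak} \tdot \xi') \;=\; \argmax(\epr^+_{\ibreak} \tdot \xi) \ \cup\ \argmax_{j \in J}(\ep^+_{\ibreak j} + \xi_j)
\]
established in the proof of Proposition~\ref{prop:maximal_step}. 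The union is disjoint and both parts are non-empty; since $\ibreak$ has exactly two incoming arcs in $\digraph_{\xi'}$, each part is a singleton. The first singleton is the coordinate node already adjacent to $\ibreak$ in $\digraph_{\oiv{\xi,\xi'}}$, while the second is a node $j_\ent \in J$, and $\mu = \lambda^+_{\ibreak} < \lambda^-_{\ibreak}$ forces $\max_{j \in J}(\ep^+_{\ibreak j} + \xi_j) > \max_{j \in J}(\ep^-_{\ibreak j} + \xi_j)$, so $\{j_\ent\} = \argmax\nolimits_{j \in J}(|\ep_{\ibreak j}| + \xi_j)$ and $\ep_{\ibreak j_\ent}$ is tropically positive. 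Thus the removed arc is $a_\ent = (j_\ent, \ibreak)$ and $\digraph_{\xi'} = \digraph_{\oiv{\xi,\xi'}} \cup \{a_\ent\}$. I expect this identification — passing from ``the arc that appears'' to the explicit coordinate node $\argmax\nolimits_{j \in J}(|\ep_{\ibreak j}| + \xi_j)$ with the correct orientation — to be the technical heart of the argument, since it is here that the genericity Assumption~\ref{ass:general_position} (forcing the enlarged $\argmax$ to have size exactly two) and the sign bookkeeping between $\lambda^+_{\ibreak}$ and $\lambda^-_{\ibreak}$ are genuinely used.

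For the final claim, let $[\xi', \xi'']$ be the next ordinary segment of $\tropE_K$. Applying Lemma~\ref{lemma:tangent_graph_is_constant} to $[\xi',\xi'']$ and counting arcs once more, $\digraph_{\oiv{\xi', \xi''}}$ is the spanning tree $\digraph_{\xi'}$ with one arc removed, which (as above) must be an arc incident to $\ibreak$ having the same orientation as $a_\ent$, hence either $a_\ent$ itself or the other such arc $a_\leaving$. It cannot be $a_\ent$: otherwise $\digraph_{\oiv{\xi',\xi''}} = \digraph_{\xi'} \setminus \{a_\ent\} = \digraph_{\oiv{\xi,\xi'}}$, whose only feasible directions are $e^J$ and $-e^J$ by Proposition~\ref{prop:tangent_graph_interior_edge}~(C\ref{item:shape2}); the segments $[\xi,\xi']$ and $[\xi',\xi'']$ would then be collinear, contradicting that the direction sets of consecutive ordinary segments of $\tropE_K$ are strictly nested (Proposition~\ref{prop:tropical_edge}) and that $\xi'$ is a breakpoint. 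Hence the removed arc is $a_\leaving$, and $\digraph_{\oiv{\xi', \xi''}} = \digraph_{\xi'} \setminus \{a_\leaving\}$.
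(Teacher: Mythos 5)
Your proof is correct and takes essentially the same route as the paper's: Lemma~\ref{lemma:tangent_graph_is_constant} supplies the subgraph relations, the node/arc counts from Proposition~\ref{prop:tangent_graph_interior_edge} show that exactly one arc, necessarily incident to $\ibreak$ with the stated orientation, is added or removed, and Remark~\ref{remark:degree} together with the argmax identity from the proof of Proposition~\ref{prop:maximal_step} pins it down as the arc at the unique element of $\argmax_{j\in J}(|\ep_{\ibreak j}|+\xi_j)$. Your explicit exclusion of $a_\ent$ in the final step, via the strict nesting of direction sets in Proposition~\ref{prop:tropical_edge}, merely spells out what the paper leaves implicit when it notes that $\digraph_{\xi'}\setminus\{a_\ent\}$ is just $\digraph_{\oiv{\xi,\xi'}}$.
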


An illustration of~\eqref{item:interior_to_breakpoint} is given in Figure~\ref{fig:tangent_graph_update}.

\begin{proof}
Let $x^{\lambda} := \xi + \lambda e^J$.
\begin{asparaenum}[(i)]
 
\item By Proposition~\ref{prop:tangent_graph_interior_edge}~(C\ref{item:shape2}), the tangent digraph $\digraph_{\oiv{\xi,\xi'}}$ does not contain the hyperplane node $\ileaving$. As $\digraph_{\oiv{\xi,\xi'}}$ is a subdigraph of $\digraph_\xi$ by Lemma~\ref{lemma:tangent_graph_is_constant}, 
we deduce that it is also a subdigraph of $\digraph_\xi \setminus \{\ileaving\}$. Since $\digraph_{\oiv{\xi,\xi'}}$ and $\digraph_\xi \setminus \{\ileaving\}$ have the same number of nodes and arcs by Proposition~\ref{prop:tangent_graph_interior_edge}, we conclude that they are equal.
 \item We assume that $\ibreak$ has degree $(2,1)$ in $\digraph_{\xi'}$, the proof being similar when $\ibreak$ has degree $(1,2)$. 
To begin with, we know that $\mu = \lambda^+_{\ibreak}(\xi,J)$ thanks to Remark~\ref{remark:degree}. Let $l \in \argmax_{j \in J} (\ep^+_{\ibreak j} + \xi_j)$. Then for all $0 < \lambda < \lambda^+_{\ibreak}(\xi,J)$, we have $\ep^+_{\ibreak l} + x^\lambda_l < \epr^+_{\ibreak} \tdot x^\lambda$, while $\ep_{\ibreak l}^+ + x^\mu_l = \epr^+_{\ibreak} \tdot x^\mu$. It follows that the arc $(l,\ibreak)$ does not belong to $\digraph_{\oiv{\xi,\xi'}}$, whereas it appears in $\digraph_{\xi'}$. We deduce that $\digraph_{\oiv{\xi,\xi'}} \cup \{(l,\ibreak)\}$ is a subgraph of $\digraph_{\xi'}$ thanks to Lemma~\ref{lemma:tangent_graph_is_constant}. Both are equal by Proposition~\ref{prop:tangent_graph_interior_edge}. Note that $\argmax_{j \in J} (\ep^+_{\ibreak j} + \xi_j)$ is reduced to $\{l\}$ as $\ibreak$ has two incoming arcs in $\digraph_{\xi'}$. Due to Remark~\ref{remark:degree} we have $\lambda^+_\ibreak(\xi, J) < \lambda^-_\ibreak(\xi,J)$. It follows that
\[
\argmax_{j \in J} (|\ep_{\ibreak j}| + \xi_j) = \argmax_{j \in J} (\ep^+_{\ibreak j} + \xi_j)  = \{l\} \enspace .
\]

In the second place, by applying Lemma~\ref{lemma:tangent_graph_is_constant} to the segment $[\xi',\xi'']$, 
we know that $\digraph_{\oiv{\xi',\xi''}}$ is a subdigraph of $\digraph_{\xi'}$.
By Proposition~\ref{prop:tangent_graph_interior_edge}, the hyperplane node $\ibreak$ has degree $(1,1)$ in $\digraph_{\oiv{\xi',\xi''}}$. Thus, the digraph $\digraph_{\oiv{\xi',\xi''}}$ 
is either equal to $\digraph_{\xi'} \setminus \{a_\ent\}$ or $\digraph_{\xi'} \setminus \{a_\leaving\}$. 
As the former corresponds to the tangent digraph $\digraph_{\oiv{\xi,\xi'}}$, we deduce that 
$\digraph_{\oiv{\xi',\xi''}} = \digraph_{\xi'} \setminus \{a_\leaving\} $.
\qedhere
\end{asparaenum}
\end{proof}

\begin{remark}\label{remark:next_direction}
We point out that in Proposition~\ref{prop:tangent_graph_update}~\eqref{item:interior_to_breakpoint}, the set $J'$ corresponding to the direction of the next segment $[\xi', \xi'']$ is precisely given by the set of coordinate nodes weakly connected to $\ibreak$ in the digraph $\digraph_{\oiv{\xi',\xi''}} = \digraph_{\xi'} \setminus \{ \aleaving \}$; see Figure~\ref{fig:tangent_graph_update} for an illustration. 

Indeed, according to Proposition~\ref{prop:tangent_graph_interior_edge}~(C\ref{item:shape2}), the digraph $\digraph_{\oiv{\xi',\xi''}}$ consists of two weakly connected components. Let $\mathfrak{J}$ be the set of coordinate nodes of the component containing the hyperplane node $\ibreak$. From any point in $\oiv{\xi', \xi''}$, the two feasible directions are $\pm e^{\mathfrak{J}}$. As a result, $J' = \mathfrak{J}$ or $J' = [n+1] \setminus \mathfrak{J}$. Let $l$ be the coordinate node incident to $a_\ent$. Then $l \in J$ by Proposition~\ref{prop:tangent_graph_update}~\eqref{item:interior_to_breakpoint}, and so $l \in J'$ as $J \subset J'$. Besides, since $a_\ent$ still appears in $\digraph_{\oiv{\xi',\xi''}}$, the coordinate node $l$ is weakly connected to $k$. Therefore, $l \in \mathfrak{J}$. We conclude that $J' = \mathfrak{J}$, as expected.

\end{remark}

The following proposition allows to incrementally maintain the sets $\enteringHyp$, $\breakHyp$ and the associated scalars $\lambda^\pm_i$ along the tropical edge $\tropE_K$.
\begin{proposition} \label{prop:lambda_sets_update}
Let $[\xi, \xi'] \cup [\xi', \xi'']$ be two consecutive ordinary segments of $\tropE_K$, where $[\xi, \xi'] = \{ \xi + \lambda e^J \mid 0 \leq \lambda \leq \mu \}$ and $[\xi', \xi''] = \{ \xi' + \lambda e^{J'} \mid 0 \leq \lambda \leq \mu' \}$. Then,
\begin{enumerate}[(i)]
\item\label{item:breakHyp_subset} $\breakHyp(\xi', J') \subset \breakHyp(\xi, J) $.
\item\label{item:argmax_update}   $\argmax(\epr^+_i \tdot \xi') = \argmax(\epr^+_i \tdot \xi)$ for all $i \in \enteringHyp(\xi', J')$.
\item\label{item:enteringHyp_update} $\enteringHyp(\xi', J') = \{ i \in \enteringHyp( \xi, J) \mid   \mu < \lambda^+_i(\xi, J) \text{ and } \argmax(\epr^+_i \tdot \xi) \cap (J' \setminus J) = \emptyset \}$.
\item\label{item:lambda_update}  for all $i \in  \enteringHyp(\xi', J') \cup \breakHyp(\xi', J')$, we have: 
  \begin{align*}
    \epr^+_i \tdot \xi' &= \epr^+_i \tdot \xi \\
    \lambda^+_i( \xi', J') &= \min \left (  \lambda^+_i (\xi, J) - \mu \ , \  (\epr^+_{i} \tdot \xi) - \max _{ j \in J' \setminus J } ( \ep^+_{ij} + \xi_j)  \right) \,, \\
 \lambda^-_i( \xi', J') &= \min \left (  \lambda^-_i (\xi, J) - \mu \ , \ (\epr^+_{i} \tdot \xi) -   \max _{ j \in J' \setminus J } (\ep^-_{ij} + \xi_j ) \right) \,.
  \end{align*}

\end{enumerate}

\end{proposition}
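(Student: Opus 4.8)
The plan is to keep the explicit parametrization $\xi' = \xi + \mu e^J$ with $\mu > 0$ in force, and to use throughout that $J \subsetneq J'$ (Proposition~\ref{prop:tropical_edge}). Since $e^J$ has entry $1$ precisely on $J$, one has $\xi'_j = \xi_j + \mu$ for $j \in J$ and $\xi'_j = \xi_j$ otherwise, so that for every $i \in [m]$
\[
\epr^\pm_i \tdot \xi' \;=\; \max\Bigl( \max_{j \in J}(\ep^\pm_{ij} + \xi_j) + \mu \,,\, \max_{j \notin J}(\ep^\pm_{ij} + \xi_j) \Bigr) \;.
\]
Every item below will follow from this single display, plus the facts that $\xi \in \tropE_K \subset \tropC$ makes $\epr^+_i \tdot \xi$ finite for all $i$ and equal to $\epr^-_i \tdot \xi$ for $i \in K$, and that $\lambda^+_i(\xi,J)$ (resp.\ $\lambda^-_i(\xi,J)$) is $(\epr^+_i\tdot\xi)$ minus the $\ep^+$- (resp.\ $\ep^-$-) maximum over $J$ appearing in the display.

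First I would establish (ii) and (iii) together. Taking $i \in \enteringHyp(\xi', J')$, so that $i \in [m]\setminus K$ and $\argmax(\epr^+_i\tdot\xi') \cap J' = \emptyset$, the display forces $\max_{j\in J}(\ep^+_{ij}+\xi_j) + \mu < \max_{j\notin J}(\ep^+_{ij}+\xi_j)$ (else some node of $J \subset J'$ would lie in $\argmax(\epr^+_i\tdot\xi')$); with $\mu > 0$ this gives $\max_{j\in J}(\ep^+_{ij}+\xi_j) < \epr^+_i\tdot\xi = \max_{j\notin J}(\ep^+_{ij}+\xi_j)$, hence $i \in \enteringHyp(\xi,J)$, the inequality $\mu < \lambda^+_i(\xi,J)$, and $\argmax(\epr^+_i\tdot\xi') = \argmax(\epr^+_i\tdot\xi)$ --- which is exactly (ii) and also shows this common set avoids $J'$, a fortiori $J'\setminus J$. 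For the reverse inclusion, feeding the display with $i \in \enteringHyp(\xi,J)$, $\mu < \lambda^+_i(\xi,J)$ and $\argmax(\epr^+_i\tdot\xi)\cap (J'\setminus J) = \emptyset$ again yields $\argmax(\epr^+_i\tdot\xi') = \argmax(\epr^+_i\tdot\xi)$, which is disjoint from $J' = J \sqcup (J'\setminus J)$, so $i \in \enteringHyp(\xi',J')$ (with the conventions for $\pm\infty$).

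For (i) I would argue by contraposition: if $i \in K \setminus \breakHyp(\xi,J)$, then $e^J$ being feasible from $\xi$ in $\tropH_i$ (Proposition~\ref{prop:tangent_graph_interior_edge}) lets Lemma~\ref{lemma:feasible_directions}(iii) produce a coordinate node $j_0 \in \argmax(\epr^+_i\tdot\xi)\cap J$; then $\ep^+_{ij_0}+\xi'_{j_0} = (\epr^+_i\tdot\xi)+\mu \geq \epr^+_i\tdot\xi'$ by the display, so $j_0 \in \argmax(\epr^+_i\tdot\xi')\cap J'$ and $i \notin \breakHyp(\xi',J')$.

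Finally, for (iv) the crux is the value identity $\epr^+_i\tdot\xi' = \epr^+_i\tdot\xi$ for $i \in \enteringHyp(\xi',J') \cup \breakHyp(\xi',J')$, which is also the only place where the tangent-digraph machinery is genuinely needed. For $i \in \enteringHyp(\xi',J')$ it was obtained above. For $i \in \breakHyp(\xi',J')$, since $[\xi',\xi'']$ is a further segment, $\xi'$ is a breakpoint with its distinguished node $\ibreak$; Proposition~\ref{prop:tangent_graph_update}(ii) together with Remark~\ref{remark:next_direction} shows $\ibreak$ is incident in $\digraph_{\xi'}$ to a coordinate node of $J'$, hence $\ibreak \notin \breakHyp(\xi',J')$, so $i \in \breakHyp(\xi,J)$ (by (i)) with $i \ne \ibreak$. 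Remark~\ref{remark:degree} tells us $\mu$ equals exactly one of the scalars among $\{\lambda^-_h : h \in \enteringHyp(\xi,J)\}$ and $\{\lambda^+_\ibreak(\xi,J), \lambda^-_\ibreak(\xi,J)\}$, while Proposition~\ref{prop:maximal_step}(i) gives $\mu \le \min(\lambda^+_i(\xi,J), \lambda^-_i(\xi,J))$; combining, $\mu < \lambda^+_i(\xi,J)$ and $\mu < \lambda^-_i(\xi,J)$ strictly, and then the display with $\argmax(\epr^\pm_i\tdot\xi)\cap J = \emptyset$ gives $\epr^+_i\tdot\xi' = \max_{j\notin J}(\ep^+_{ij}+\xi_j) = \epr^+_i\tdot\xi$. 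With this identity, the two recursions for $\lambda^\pm_i(\xi',J')$ drop out of their definitions by splitting the maximum over $J'$ into its part over $J$ (contributing $\max_{j\in J}(\ep^\pm_{ij}+\xi_j)+\mu$) and its part over $J'\setminus J$, and converting $-\max(\cdot,\cdot)$ into $\min(-\cdot,-\cdot)$; the conventions for $\pm\infty$ make the empty-maximum cases consistent. I expect the main obstacle to be this last argument pinning down $i \ne \ibreak$ (and hence the strict inequalities $\mu < \lambda^\pm_i(\xi,J)$), since it is where one must invoke the fine structure of the tangent digraph at a breakpoint; everything else is essentially bookkeeping with the single display above.
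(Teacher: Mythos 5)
Your proof is correct and follows essentially the same route as the paper's: the same explicit splitting of the maximum over $J'=J\sqcup(J'\setminus J)$ with $\xi'_j=\xi_j+\mu$ on $J$, Lemma~\ref{lemma:feasible_directions} for item~\eqref{item:breakHyp_subset}, and the chain $\enteringHyp(\xi',J')\subset\{i\in\enteringHyp(\xi,J)\mid \mu<\lambda^+_i(\xi,J)\}$ for items~\eqref{item:argmax_update}--\eqref{item:enteringHyp_update}. The only deviation is your detour in item~\eqref{item:lambda_update} (via Proposition~\ref{prop:tangent_graph_update}, Remark~\ref{remark:next_direction} and Remark~\ref{remark:degree}) to force $i\neq\ibreak$ and the strict inequalities $\mu<\lambda^{\pm}_i(\xi,J)$ for $i\in\breakHyp(\xi',J')$; this is superfluous, since the non-strict bound $\mu\leq\min\bigl(\lambda^+_i(\xi,J),\lambda^-_i(\xi,J)\bigr)$ from Proposition~\ref{prop:maximal_step}~\eqref{item:maximal_step1}, combined with $\argmax(\epr^+_i\tdot\xi)\cap J=\emptyset$, already yields $\epr^+_i\tdot\xi'=\epr^+_i\tdot\xi$, which is all the recursion needs and is exactly how the paper argues.
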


\begin{proof}

  \begin{asparaenum}[(i)]
  \item  Suppose by contradiction that $i \not \in \breakHyp(\xi, J)$. Then, by Lemma \ref{lemma:feasible_directions} the intersections $\argmax(\epr^+_i \tdot \xi)\cap J$ and $\argmax(\epr^-_i \tdot \xi)\cap J$ are both non-empty. As a consequence,  $\argmax(\epr^+_i \tdot \xi')$ and $\argmax(\epr^-_i \tdot \xi')$ are included in $J$. Since $J \subset J'$ by Proposition \ref{prop:tropical_edge}, we conclude that $i \not \in  \breakHyp(\xi', J')$.
 
\item  First observe that $\enteringHyp(\xi', J') \subset \enteringHyp(\xi, J)$. Indeed, consider an $i \in K \setminus \enteringHyp(\xi, J) $. Then  $\argmax(\epr^+_i \tdot \xi) \cap J \neq \emptyset$, which implies $\argmax(\epr^+_i \tdot \xi') \subset J$.
 Using the inclusion $J \subset J'$, we obtain that $\argmax(\epr^+_i \tdot \xi') \cap J' \neq \emptyset$, and therefore $i \not \in \enteringHyp(\xi', J')$.

Second  if $i \in \enteringHyp(\xi, J)$ satisfies  $\mu \geq \lambda^+_i(\xi, J) $
 then $\argmax(\epr^+_i \tdot \xi')$ intersects $J \subset J'$, thus $i \not \in \enteringHyp(\xi', J')$. As a consequence:
\begin{equation}\label{eq:entering_hyp_update_first_step}
\enteringHyp(\xi', J')  \subset   \{ i \in \enteringHyp(\xi, J) \mid  \mu < \lambda^+_i(\xi, J) \} \,.
\end{equation}
Finally for any $i \in \enteringHyp(\xi', J')$, we have $ \mu < \lambda^+_i(\xi, J)$ and therefore $\argmax(\epr^+_i \tdot \xi') = \argmax(\epr^+_i \tdot \xi)$.

\item 

Using~\eqref{eq:entering_hyp_update_first_step} let us consider an  $i \in \enteringHyp(\xi, J)$ such that $\mu < \lambda^+_i(\xi, J) $.
 Then, as above,  $\argmax(\epr^+_i \tdot \xi') = \argmax(\epr^+_i \tdot \xi)$. Moreover, $i \in \enteringHyp(\xi,J)$ implies $ \argmax(\epr^+_i \tdot \xi) \cap J = \emptyset$. Thus $\argmax(\epr^+_i \tdot \xi') \cap J' = \emptyset$ if and only if $\argmax(\epr^+_i \tdot \xi) \cap (J' \setminus J) = \emptyset$. 

\item  Consider  $i \in  \enteringHyp(\xi', J') \cup \breakHyp(\xi', J')$. 
If $i \in  \enteringHyp(\xi', J')$ then  $\mu < \lambda^+_i(\xi, J) $ by~\eqref{eq:entering_hyp_update_first_step}. Otherwise, if $i \in \breakHyp(\xi', J')$, then $i \in \breakHyp(\xi, J)$ by~\eqref{item:breakHyp_subset} and thus $\mu \leq \lambda^+_i(\xi, J)$ by~\eqref{eq:maximal_step}. In both cases, we obtain $\epr^+_i \tdot \xi' = \epr^+_i \tdot \xi$.

Let us rewrite $\lambda^+_i( \xi', J')$ as follows:
 \begin{align*}
   \lambda^+_i( \xi', J') &=  \min \left ( (\epr^+_i \tdot \xi') - \max_{j \in J} (\ep^+_{ij} + \xi'_j ) \ , \   
 (\epr^+_i \tdot \xi') -  \max_{ j\in J' \setminus J} (  \ep^+_{ij} + \xi'_j) \right ) \,.
 \end{align*}
 We saw that $\epr^+_i \tdot \xi' = \epr^+_i \tdot \xi$. Furthermore, $\xi'_j = \xi_j + \mu$ if $j \in J$ and $\xi'_j = \xi_j$ otherwise.
Thus the first term of the minimum above is equal to:
\[
 (\epr^+_i \tdot \xi) - \max_{j \in J} (\ep^+_{ij} + \xi + \mu ) = \lambda^+_i(\xi, J) - \mu.
\]
The second term satisfies:
\[
 (\epr^+_i \tdot \xi') -  \max_{ j\in J' \setminus J} (  \ep^+_{ij} + \xi'_j) =  (\epr^+_i \tdot \xi) -  \max_{ j\in J' \setminus J} (  \ep^+_{ij} + \xi_j) \,.
\]
The same argument holds for $\lambda^-_i( \xi', J')$.\qedhere
\end{asparaenum}
\end{proof}

We now present an algorithm (Algorithm~\ref{alg:follow_ordinary_segment}) allowing to move along an ordinary segment $[\xi, \xi'] = \{ \xi + \lambda e^J \mid 0 \leq \lambda \leq \mu \}$ of the tropical edge $\tropE_K$. This algorithm takes as input the initial endpoint $\xi$, together with some auxiliary data, including the set $J$ encoding the direction of the segment $[\xi, \xi']$, the tangent digraph in $\oiv{\xi, \xi'}$, the sets $\enteringHyp(\xi,J)$ and $\breakHyp(\xi,J)$, \etc. It also uses an auxiliary function $\Omega$, which is defined for the pairs $(i,j) \in  \enteringHyp(\xi, J) \times [n+1] $, and which returns in time $O(1)$ whether $j \in \argmax(W^+_i \tdot \xi)$. We shall see in the main pivoting algorithm that this function is defined once for all when pivoting over the whole tropical edge.

Algorithm~\ref{alg:follow_ordinary_segment} returns the other endpoint $\xi'$. On top of that, if $\xi'$ is a breakpoint of $\tropE_K$, it provides the set $J'$ corresponding to the direction of the next ordinary segment $[\xi', \xi'']$ of $\tropE_K$, some additional data corresponding to $\xi'$, $J'$ (for instance the sets $\enteringHyp(\xi',J')$ and $\breakHyp(\xi',J')$), and the digraph $\digraph_{\oiv{\xi',\xi''}}$.

Several kinds of data structures are manipulated in Algorithm~\ref{alg:follow_ordinary_segment}, and we need to specify the complexity of the underlying operations. Arithmetic operations over $\trop$ are supposed to be done in time $O(1)$. Tangent digraphs are represented by adjacency lists. They are of size $O(n)$, and so they can be visited in time $O(n)$. Matrices are stored as two dimensional arrays, so an arbitrary entry can be accessed in $O(1)$. Vectors and the values $\epr^+_i \tdot \xi$, $\lambda^+_i(\xi, J)$ and $\lambda^-_i(\xi,J)$  for $i \in [m]$  are stored as arrays of scalars. Apart from $\Jdiff = J' \setminus J$, sets are represented as Boolean arrays, so that testing membership takes $O(1)$. The set $\Jdiff$ is stored as a list, thus iterating over its elements can be done in $O(|\Jdiff|)$.

\begin{algorithm}
\SetKwComment{Comment}{}{}
\SetArgSty{text}
\DontPrintSemicolon
\footnotesize

\KwIn{An endpoint $\xi$ of an ordinary segment $[\xi,\xi']$ of a tropical edge $\tropE_K$ and:
 \begin{asparaitem}
  \item the set $J$ encoding the direction $e^J$ of $[\xi,\xi'] = \{ \xi + \lambda e^J \mid 0 \leq \lambda \leq \mu \}$ 
  \item the tangent digraph $\digraph_{\oiv{\xi,\xi'}}$ in the relative interior of $[\xi,\xi']$
  \item the sets $\enteringHyp(\xi,J)$ and $\breakHyp(\xi,J)$
 \item the scalars $\epr^+_i \tdot \xi$, $\lambda^+_{i}(\xi,J)$ and $\lambda^-_{i}(\xi,J)$ for $i \in \breakHyp(\xi,J) \cup \enteringHyp(\xi,J)$
  \item an auxiliary function $\Omega$ determining in time $O(1)$ if $j \in \argmax(\epr^+_i \tdot \xi)$ for all $i \in \enteringHyp(\xi,J)$ and $j \in [n+1]$
  \end{asparaitem}
}
\vskip1ex
\KwOut{
The other endpoint $\xi'$ and, \\
if $\xi'$ is a basic point, the integer $\ient \not \in K$ such that $\xi' = x^{K \cup \{\ient\}}$;
\\
 if $\xi'$ is a breakpoint:
\begin{asparaitem}
\item the set $J'$ encoding the direction $e^{J'}$ of the next ordinary segment $[\xi', \xi''] = \{ \xi' + \lambda e^{J'} \mid 0 \leq \lambda \leq \mu' \}$ 
\item the tangent digraph  $\digraph_{\oiv{\xi',\xi''}}$
\item the sets $\enteringHyp(\xi',J')$ and $\breakHyp(\xi',J')$
\item the scalars $\epr^+_i \tdot \xi'$, $\lambda^+_{i}(\xi',J')$ and  $\lambda^-_{i}(\xi',J')$ for $i \in \breakHyp(\xi',J') \cup \enteringHyp(\xi',J')$
\end{asparaitem}
}
\vskip3ex
\tikz[remember picture,baseline]{\coordinate (box1 tl);}$\mu \leftarrow \min \{ \min( \lambda^+_i(\xi,J), \lambda^-_i(\xi,J) ) \mid i \in \breakHyp(\xi,J) \text{ or } ( i \in \enteringHyp(\xi,J) \text{ and }  \lambda^-_i(\xi,J) \leq \lambda^+_i(\xi,J) ) \} $ \Comment*[r]{$O(m)$} 
$\xi' \leftarrow \xi + \mu e^J$ \Comment*[r]{$O(n)$} 
\If
{$\mu = \lambda^-_{\ient}(\xi,J)$ for some $\ient \in \enteringHyp(\xi,J)$}{
\Return{$(\xi', \ient)$}\Comment*[r]{($\xi'$ is a basic point)\tikz[remember picture,baseline]{\coordinate (box1 br);}}
} 

\vskip3.5ex
\tikz[remember picture,baseline]{\coordinate (box2 tl);}$\ibreak \leftarrow$ the unique element of $\breakHyp(\xi, J)$ such that $\mu = \min(\lambda^+_{\ibreak}(\xi, J) , \lambda^-_{\ibreak}(\xi,J))$ \Comment*[r]{($\xi'$ is a breakpoint)}
$\ell \leftarrow$ the unique element  in $\argmax_{j \in J}|\ep_{{\ibreak}j}| + \xi_j$ \Comment*[r]{$O(n)$} 

$a_\ent \leftarrow$ the arc from $\ell$ to $\ibreak$ if $ \lambda^+_{{\ibreak}}(\xi,J) < \lambda^-_{{\ibreak}}(\xi,J)$, the arc  from ${\ibreak}$ to $\ell$ otherwise \label{line:new_arc} \Comment*[r]{$O(1)$} 

$\digraph_{\xi'}  \leftarrow \digraph_{\oiv{\xi, \xi'}} \cup \{ a_\ent\}$ \label{line:add_arc}\Comment*[r]{$O(n)$\tikz[remember picture,baseline]{\coordinate (box2 br);}}

compute $\breakHyp(\xi',J')$ by visiting $\digraph_{\xi'}$ \Comment*[r]{$O(n)$}  \label{line:compute_breakHyp}   

\vskip2.5ex
\tikz[remember picture,baseline]{\coordinate (box3 tl);}$a_\leaving \leftarrow$ the only arc incident to ${\ibreak}$ in $\digraph_{\xi'}$ with the same orientation as $a_\ent$   \Comment*[r]{$O(1)$} 
$\digraph_{\oiv{\xi',\xi''}} \leftarrow \digraph_{\xi'} \setminus \{ a_\leaving \}$   \label{line:remove_arc} \Comment*[r]{$O(n)$} 
 
$J' \leftarrow$ coordinate nodes of $\digraph_{\oiv{\xi',\xi''}}$ weakly connected to ${\ibreak}$   
\Comment*[r]{$O(n)$\tikz[remember picture,baseline]{\coordinate (box3 br);}}
 
\vskip4.5ex 
\tikz[remember picture,baseline]{\coordinate (box4 tl);}$\Jdiff \leftarrow$ the list of elements of  $J' \setminus J$ \label{line:Jdiff} \Comment*[r]{$O(n)$} 
$\enteringHyp(\xi',J') \leftarrow \{ i \in \enteringHyp(\xi,J) \mid \mu < \lambda^+_i(\xi,J) \text{ and } \argmax(\epr^+_i \tdot \xi) \cap \Jdiff = \emptyset  \}$ \label{line:compute_enteringHyp} \Comment*[r]{$O(m | J' \setminus J |)$ using the function $\Omega$} %

\For(\Comment*[f]{$O(m)$ iterations}){$ i \in \enteringHyp(\xi',J') \cup \breakHyp(\xi',J')$ 
} 
{
$\epr_i^+ \tdot \xi' := \epr_i^+ \tdot \xi$  \Comment*[r]{$O(1)$}
$\lambda^+_{i}(\xi',J') := \min \Bigl( \lambda^+_i(\xi, J) - \mu \ , \ (\epr^+_i \tdot \xi) - \max\limits_{j \in \Jdiff} ( \ep^+_{ij} + \xi_j)  \Bigr)$\label{line:lambda_plus}\Comment*[r]{$O(| J' \setminus J |)$} 
$\lambda^-_{i}(\xi',J') := \min \Bigl( \lambda^+_i(\xi, J) - \mu \ , \ (\epr^+_i \tdot \xi) - \max\limits_{j \in \Jdiff} ( \ep^-_{ij} + \xi_j)  \Bigr)$
 \label{line:lambda_minus} \Comment*[r]{$O(| J' \setminus J |)$\tikz[remember picture,baseline]{\coordinate (box4 br) at (0, -0.1);}} 

} 

\Return{$(\xi',J',\digraph_{\oiv{\xi',\xi''}}, \enteringHyp(\xi',J'),\breakHyp(\xi',J'),(\epr^+_i \tdot \xi')_i, (\lambda^+_i(\xi',J'))_i, (\lambda^-_i(\xi',J'))_i)$}
\begin{tikzpicture}[remember picture,overlay]
\fitbox{box1}{green!80!black}{Proposition~\ref{prop:maximal_step} \eqref{item:maximal_step1}--\eqref{item:maximal_step3}} ;
\fitbox{box2}{blue!80!black}{Proposition~\ref{prop:tangent_graph_update}~\eqref{item:interior_to_breakpoint}, Remark~\ref{remark:degree}} ; 
\fitbox{box3}{orange!80}{Remark~\ref{remark:next_direction}} ; 
\fitbox{box4}{red!80}{Proposition~\ref{prop:lambda_sets_update}~\eqref{item:enteringHyp_update}--\eqref{item:lambda_update}} ; 
\end{tikzpicture}  
\caption{Traversal of an ordinary segment of an tropical edge}
\label{alg:follow_ordinary_segment}
\end{algorithm}
 
\begin{proposition}\label{prop:follow_ordinary_segment}
Algorithm~\ref{alg:follow_ordinary_segment} is correct, and its time complexity is bounded by $O(n + m |J' \setminus J|)$.
\end{proposition}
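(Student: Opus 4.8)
\emph{Overview.} I would prove the two assertions — correctness of the output and the time bound $O(n+m\lvert J'\setminus J\rvert)$ — by walking through Algorithm~\ref{alg:follow_ordinary_segment} line by line; the marginal annotations already pair each block with the result that justifies it, so the task is to check that the hypotheses of each invoked statement hold at the moment it is used and that the per-line cost estimates are correct for the chosen data structures. The starting point is that $\mu$ computed on the first line is indeed the length of $[\xi,\xi']$: by Proposition~\ref{prop:maximal_step}~(i) it is the greatest $\lambda\ge 0$ satisfying~\eqref{eq:maximal_step}, and for $i\in\enteringHyp$ with $\lambda^-_i\le\lambda^+_i$ one has $\lambda^-_i=\min(\lambda^+_i,\lambda^-_i)$, so the minimum taken by the algorithm is exactly this value; hence $\xi'=\xi+\mu e^J$ is the second endpoint. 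By Remark~\ref{remark:degree}, $\mu$ equals exactly one of the numbers $\lambda^-_i$ $(i\in\enteringHyp)$, $\lambda^+_k$, $\lambda^-_k$ $(k\in\breakHyp)$; therefore the test ``$\mu=\lambda^-_{\ient}$ for some $\ient\in\enteringHyp$'' unambiguously decides whether $\xi'$ is a basic point and, in that case, which index $\ient$ enters, and Proposition~\ref{prop:maximal_step}~(ii) certifies $\xi'=x^{K\cup\{\ient\}}$. Otherwise the same remark produces a unique $\ibreak\in\breakHyp$ with $\mu=\min(\lambda^+_\ibreak,\lambda^-_\ibreak)$, and by Proposition~\ref{prop:maximal_step}~(iii) and Remark~\ref{remark:degree} the node $\ibreak$ has degree $(2,1)$ in $\digraph_{\xi'}$ when $\mu=\lambda^+_\ibreak$ and $(1,2)$ when $\mu=\lambda^-_\ibreak$.

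\emph{The breakpoint branch.} Here I would invoke Proposition~\ref{prop:tangent_graph_update}~(ii): it yields $\digraph_{\xi'}=\digraph_{\oiv{\xi,\xi'}}\cup\{a_\ent\}$, where $a_\ent$ joins $\ibreak$ to the unique element $\ell$ of $\argmax_{j\in J}(\lvert\ep_{\ibreak j}\rvert+\xi_j)$ and is oriented into $\ibreak$ precisely when $\lambda^+_\ibreak(\xi,J)<\lambda^-_\ibreak(\xi,J)$ — exactly the choice made on lines~\ref{line:new_arc}--\ref{line:add_arc}. The arc $a_\leaving$ and the digraph $\digraph_{\oiv{\xi',\xi''}}=\digraph_{\xi'}\setminus\{a_\leaving\}$ are justified by the last assertion of that proposition, and the direction set $J'$ by Remark~\ref{remark:next_direction}. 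For $\breakHyp(\xi',J')$ I would argue, via Lemma~\ref{lemma:feasible_directions}, that it is the set of $i\in K$ none of whose coordinate neighbours in $\digraph_{\xi'}$ lies in $J'$; since $\digraph_{\oiv{\xi',\xi''}}$ has two weakly connected components with $\ibreak$ (hence $\ell$) on the side whose coordinate nodes form $J'$, this is precisely the set of $i\in K$ lying in the component of $\digraph_{\xi'}\setminus\{a_\leaving\}$ that does not contain $\ibreak$, and a single traversal of $\digraph_{\xi'}$ from $\ibreak$ avoiding the arc $a_\leaving$ (the latter identified from $a_\ent$ and the degree pattern at $\ibreak$) produces it. Finally, lines~\ref{line:Jdiff}--\ref{line:lambda_minus} reproduce verbatim the formulas of Proposition~\ref{prop:lambda_sets_update}~(iii)--(iv) for $\enteringHyp(\xi',J')$ and for the updated values $\epr^+_i\tdot\xi'$, $\lambda^+_i(\xi',J')$, $\lambda^-_i(\xi',J')$, using that $\Omega$ decides membership in $\argmax(\epr^+_i\tdot\xi)$ in time $O(1)$ and that, by Proposition~\ref{prop:lambda_sets_update}~(ii), this $\argmax$ is the same for $\xi$ and $\xi'$ whenever $i\in\enteringHyp(\xi',J')$, so $\Omega$ stays valid. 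This establishes correctness.

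\emph{Complexity.} With the stated conventions, arithmetic over $\trop$, array accesses, and membership tests in Boolean-array sets cost $O(1)$. A tangent digraph along $\tropE_K$ has $n+1$ coordinate nodes and at most $n$ hyperplane nodes and, being a forest, $O(n)$ arcs; hence any operation that builds, copies, traverses, or modifies such a digraph by a bounded number of arcs costs $O(n)$. In particular the lines producing $\xi'$, $\ell$, $a_\ent$, $\digraph_{\xi'}$, $\breakHyp(\xi',J')$, $a_\leaving$, $\digraph_{\oiv{\xi',\xi''}}$, $J'$ and $\Jdiff$ are each $O(n)$, and the first line (computing $\mu$) is $O(m)$. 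The two remaining blocks dominate: line~\ref{line:compute_enteringHyp} examines the $O(m)$ indices of $\enteringHyp(\xi,J)$ and, for each, iterates over $\Jdiff$ calling $\Omega$, costing $O(m\lvert J'\setminus J\rvert)$; the concluding loop runs over the $O(m)$ indices of $\enteringHyp(\xi',J')\cup\breakHyp(\xi',J')$, spending $O(\lvert J'\setminus J\rvert)$ per index on the maxima over $\Jdiff$, again $O(m\lvert J'\setminus J\rvert)$. Summing gives $O(n+m+m\lvert J'\setminus J\rvert)=O(n+m\lvert J'\setminus J\rvert)$, using $\lvert J'\setminus J\rvert\ge 1$, which holds in the breakpoint branch since $J\subsetneq J'$ by Proposition~\ref{prop:tropical_edge}; in the basic-point case the algorithm returns after only its first two lines, at cost $O(m+n)$.

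\emph{Main obstacle.} The only genuinely delicate point I anticipate is line~\ref{line:compute_breakHyp}: one must check that $\breakHyp(\xi',J')$ is determined by $\digraph_{\xi'}$ alone — that is, before $J'$ and $\digraph_{\oiv{\xi',\xi''}}$ have been formed explicitly — by first recovering $a_\leaving$ from $a_\ent$ and the degree of $\ibreak$, and then reading off the component structure of $\digraph_{\xi'}\setminus\{a_\leaving\}$. Closely related is the need to make the incremental input–output contract internally consistent, namely that the digraph $\digraph_{\oiv{\xi,\xi'}}$ supplied as input genuinely equals $\digraph_{\xi'}\setminus\{a_\ent\}$ (as guaranteed by Proposition~\ref{prop:tangent_graph_update}), so that the set operations $\digraph_{\oiv{\xi,\xi'}}\cup\{a_\ent\}$ and $\digraph_{\xi'}\setminus\{a_\leaving\}$ are legitimate. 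Everything else is a routine verification that each invoked proposition applies at the stage at which the algorithm uses it.
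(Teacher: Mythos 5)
Your proposal is correct and follows essentially the same route as the paper: a line-by-line verification citing Proposition~\ref{prop:maximal_step}, Remark~\ref{remark:degree}, Proposition~\ref{prop:tangent_graph_update}, Remark~\ref{remark:next_direction} and Proposition~\ref{prop:lambda_sets_update}, together with the same per-line cost accounting yielding $O(n+m|J'\setminus J|)$. Your treatment of Line~\ref{line:compute_breakHyp} (reading $\breakHyp(\xi',J')$ off the component of $\digraph_{\xi'}\setminus\{a_\leaving\}$ not containing $\ibreak$) is an equivalent variant of the paper's test that a hyperplane node has no neighbour in $J'$, checked on its at most three incident arcs.
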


\begin{proof}
\begin{asparaitem}
\item[\emph{Correctness}:]
The correctness of the highlighted parts of the algorithm straightforwardly follows from the corresponding results given as annotations.
 
At Line~\ref{line:compute_breakHyp}, 
the set $\breakHyp(\xi',J')$ is built by iterating over the nodes of $\digraph_{\xi'}$, and collecting the hyperplane nodes $i$ with no neighbor in $J'$. 
This is correct since the set of hyperplane nodes  is precisely $K$ (by Proposition~\ref{prop:tangent_graph_interior_edge} and the fact that $\xi'$ is a breakpoint), and because the adjacent nodes of each $i \in K$ are precisely the elements of $\argmax(\epr^+_i \tdot \xi') \cup  \argmax(\epr^-_i \tdot \xi')$  by construction of $\digraph_{\xi'}$. 

\item[\emph{Complexity}:] 
At Lines~\ref{line:add_arc} and~\ref{line:remove_arc}, the operations of removing  or adding an arc can be performed in $O(n)$ by visiting the digraphs. Identifying the arc $a_{\leaving}$ at Line~\ref{line:remove_arc} amounts to iterate over the arcs incident to ${\ibreak}$, and there is exactly 3 such arcs by Proposition~\ref{prop:tangent_graph_interior_edge}.

Testing whether a hyperplane node $i$ of $\digraph_{\xi'}$ satisfies $\argmax(\epr^+_i \tdot \xi') \cap J' = \argmax(\epr^-_i \tdot \xi') \cap J' = \emptyset$ can be done in $O(1)$, by determining whether the adjacent coordinate nodes (at most $3$) in $\digraph_{\xi'}$ belong to $J'$. Thus the set $\breakHyp(\xi',J')$ can be built in time $O(n)$ by iterating over the hyperplane nodes of $\digraph_{\xi'}$. 

Given $i \in \enteringHyp(\xi,J)$, determining whether $\argmax(\epr^+_i \tdot \xi) \cap \Jdiff = \emptyset$ can be performed by calling the auxiliary function $\Omega$ for every element $j \in \Jdiff$. 
It follows that $\enteringHyp(\xi',J')$ can be computed at Line~\ref{line:compute_enteringHyp} in time $O(m |J' \setminus J|)$.

Computations at Lines~\ref{line:lambda_plus} and \ref{line:lambda_minus} are  done by iterating over elements $j \in \Jdiff$ and then retrieving the values of $\epr^+_i \tdot \xi$, $\ep^+_{ij}$, $\ep^-_{ij}$ and $\xi_j$. Since these values are stored in arrays, they can be accessed to in constant time. Therefore, $\lambda^+_i(\xi', J')$ and  $\lambda^-_i(\xi', J')$ are computed in time $O(|\Jdiff|) = O(|J' \setminus J|)$.
The complexity of other operations is easily obtained. In total, the complexity of the algorithm is $O(n+m |J' \setminus J|)$.
\qedhere
\end{asparaitem}
\end{proof}

\begin{algorithm}[h]
\SetKwComment{Comment}{}{}
\SetArgSty{text}
\DontPrintSemicolon
\footnotesize
\KwIn{
A basic point $x^\basis$ of $\tropP(A, b)$, the associated set $\basis$, and an integer $\ileaving \in \basis$}
\KwOut{The other basic point $x^{\basis'}$ of the edge $\tropE_{\basis \setminus \{\ileaving\}}$, and the integer $\ient \in \basis \setminus \{\ileaving\}$ such that $\basis' = (\basis \setminus \{\ileaving\}) \cup \{\ient\}$}
compute $\digraph_{x^\basis} $ \Comment*[r]{$O(m n)$}\label{line:prelim_begin}
$\digraph_{\oiv{\xi^1,\xi^2} } \leftarrow \digraph_{x^{\basis}} \setminus \{ \ileaving \} $ \label{line:compute_digraph}\Comment*[r]{$O(n)$} 
$J \leftarrow$ coordinate nodes weakly connected to the element of $ \argmax(\epr^+_{\ileaving} \tdot x^\basis)$ in $\digraph_{\oiv{\xi,\xi'} }$  \label{line:compute_J} \Comment*[r]{$O(n)$} 
compute $E \leftarrow \enteringHyp(x^\basis,J)$ and $B \leftarrow \breakHyp(x^\basis,J)$ \Comment*[r]{$O(m n)$}
compute $\epr^+_i \tdot x^\basis$, $\lambda^+_i(x^\basis, J)$ and  $\lambda^-_i(x^\basis, J)$ for all $i \in E \cup B$
\Comment*[r]{$O(m n)$}
$\Omega \leftarrow$ function defined on the set 
$E \times [n+1]$ by
$ \Omega(i,j) = \begin{cases} \textbf{true} & \text{if $j \in \argmax(\epr^+_i \tdot x^\basis)$}\\ \textbf{false} & \text{otherwise} \end{cases} $ \label{line:compute_oracle}\Comment*[r]{$O(m n)$}
$\mathit{input} \leftarrow 
 x^\basis, J, \digraph_{\oiv{\xi^1,\xi^2} }, E,B, (\epr^+_i \tdot x)_{i \in E \cup B}, (\lambda^+_i(x^\basis,J))_{i \in E \cup B},  (\lambda^-_i(x^\basis,J))_{i \in E \cup B}$ \label{line:prelim_end}\;

\While(\Comment*[f]{at most $n$ iterations}){\label{line:loop_begin}\textbf{true}}{%
call Algorithm~\ref{alg:follow_ordinary_segment} on $(\mathit{input},\Omega)$ and stores the result in $\mathit{output}$\;
\lIf{$\mathit{output}$ is of the form $(\xi', \ient)$}{
\Return $(\xi', \ient)$
}
\lElse{%
$\mathit{input} \leftarrow \mathit{output}$} \label{line:loop_end}
}
\caption{Linear-time pivoting algorithm}\label{alg:pivot}
\end{algorithm}

\begin{theorem}\label{th:pivot}
Algorithm~\ref{alg:pivot} allows to pivot from a basic point along a tropical edge in time $O(n(m + n))$ and space $O(nm)$.
\end{theorem}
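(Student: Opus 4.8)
The plan is to argue that Algorithm~\ref{alg:pivot} traverses the ordinary segments of the tropical edge $\tropE_{\basis\setminus\{\ileaving\}}$ one after another, using Algorithm~\ref{alg:follow_ordinary_segment} as a subroutine, and then to bound the running time by an amortized argument that exploits the strict nesting of the direction sets from Proposition~\ref{prop:tropical_edge}. First I would check the preliminary phase (Lines~\ref{line:prelim_begin}--\ref{line:prelim_end}): it must produce exactly the tuple that Algorithm~\ref{alg:follow_ordinary_segment} expects as input for the first ordinary segment $[\xi^1,\xi^2]$, with $\xi^1 = x^\basis$ and $\basis = K\cup\{\ileaving\}$. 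By Proposition~\ref{prop:tangent_graph_interior_edge}~(C\ref{item:shape1}), at $x^\basis$ the tangent graph is a spanning tree in which every hyperplane node has degree $(1,1)$, and the unique feasible direction is $e^J$ with $J$ as computed on Line~\ref{line:compute_J}; by Proposition~\ref{prop:tangent_graph_update}~\eqref{item:basic_point_to_interior}, $\digraph_{\oiv{\xi^1,\xi^2}} = \digraph_{x^\basis}\setminus\{\ileaving\}$, which is what Line~\ref{line:compute_digraph} builds. The remaining entries ($\enteringHyp$, $\breakHyp$, the scalars $\epr^+_i\tdot x^\basis$, $\lambda^\pm_i$, and the oracle $\Omega$) are produced directly from their definitions.

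Next I would formulate the loop invariant: at the start of each iteration, $\mathit{input}$ holds the data required by Algorithm~\ref{alg:follow_ordinary_segment} for the current ordinary segment $[\xi,\xi']$, and the oracle $\Omega$ still correctly answers ``$j\in\argmax(\epr^+_i\tdot\xi)$?'' for every $i\in\enteringHyp(\xi,J)$. The invariant holds initially by the previous paragraph, and it is preserved: by Proposition~\ref{prop:follow_ordinary_segment} each call to Algorithm~\ref{alg:follow_ordinary_segment} is correct and, when the segment ends at a breakpoint, returns precisely the input tuple for the next segment; the validity of $\Omega$ persists because $\enteringHyp(\xi',J')\subset\enteringHyp(\xi,J)$ (Proposition~\ref{prop:lambda_sets_update}~\eqref{item:enteringHyp_update}) and $\argmax(\epr^+_i\tdot\xi') = \argmax(\epr^+_i\tdot\xi)$ for $i$ in this smaller set (Proposition~\ref{prop:lambda_sets_update}~\eqref{item:argmax_update}), so iterating back to $x^\basis$ the answer given by $\Omega$ remains the right one. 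By Proposition~\ref{prop:tropical_edge} the tropical edge is the concatenation of $q\le n$ ordinary segments; each iteration traverses one, and the last call returns a pair $(\xi',\ient)$ where, by Proposition~\ref{prop:maximal_step}~\eqref{item:maximal_step3} applied inside Algorithm~\ref{alg:follow_ordinary_segment}, $\xi'$ is the basic point $x^{\basis'}$ with $\basis' = K\cup\{\ient\}$. This is the required output, so the algorithm terminates correctly.

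For the running time, the preliminary phase costs $O(mn)$: computing $\digraph_{x^\basis}$ needs the two argmaxes over $n+1$ columns for each of the $m$ rows, and $\enteringHyp$, $\breakHyp$, the scalars for $i\in E\cup B$, and $\Omega$ are each computed in $O(mn)$ as well. For the main loop, let $J_1\subsetneq J_2\subsetneq\dots\subsetneq J_q$ be the direction sets of the successive segments (Proposition~\ref{prop:tropical_edge}). The iteration handling $[\xi^p,\xi^{p+1}]$ runs Algorithm~\ref{alg:follow_ordinary_segment}, which costs $O(n + m\,|J_{p+1}\setminus J_p|)$ by Proposition~\ref{prop:follow_ordinary_segment} when $\xi^{p+1}$ is a breakpoint, and $O(m+n)$ for the last iteration, which returns early after computing $\mu$ and $\xi'$. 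Summing over the $q\le n$ iterations, the $O(n)$ contributions total $O(qn) = O(n^2)$, while the sets $J_{p+1}\setminus J_p$ for $p=1,\dots,q-1$ are pairwise disjoint with union $J_q\setminus J_1$, and $|J_q\setminus J_1|\le n$ since $J_1\ne\emptyset$ and $J_q\subsetneq[n+1]$, so $\sum_p m\,|J_{p+1}\setminus J_p| = O(mn)$. Adding the preliminary phase yields the claimed $O(n(m+n))$. For space, the matrix $\epm$ and the oracle $\Omega$ each occupy $O(mn)$, the Boolean arrays representing the various sets occupy $O(m)$ or $O(n)$, a tangent digraph occupies $O(n)$, and only a constant number of these are kept at a time; hence the total space is $O(nm)$.

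The main obstacle will be the complexity analysis of the loop, since a single pivot costs $\Theta(n^2 m)$ if one recomputes everything per segment, and the bound $O(n(m+n))$ rests on two ingredients: the direction sets form a strict chain, so the increments $|J_{p+1}\setminus J_p|$ telescope; and the oracle $\Omega$ need not be rebuilt per segment because $\argmax(\epr^+_i\tdot\cdot)$ is invariant on the shrinking set $\enteringHyp$. Both have been prepared in Propositions~\ref{prop:tropical_edge}, \ref{prop:tangent_graph_update}, \ref{prop:lambda_sets_update} and~\ref{prop:follow_ordinary_segment}, so the remaining work is to assemble these facts and to account for the per-iteration costs carefully, in particular to verify that the early-return case genuinely contributes only $O(m+n)$ and that no hidden $|J'\setminus J|$ factor reappears in the terminal iteration.
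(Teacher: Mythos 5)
Your proposal is correct and follows essentially the same route as the paper: correctness via Propositions~\ref{prop:tangent_graph_interior_edge}, \ref{prop:tangent_graph_update} and~\ref{prop:follow_ordinary_segment}, invariance of the oracle $\Omega$ via Proposition~\ref{prop:lambda_sets_update}, and the complexity bound by summing the per-segment costs $O(n+m|J_{p+1}\setminus J_p|)$ over the at most $n$ nested direction sets of Proposition~\ref{prop:tropical_edge}. Your explicit telescoping argument and the separate accounting of the terminal (early-return) iteration merely spell out details the paper leaves implicit.
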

\begin{proof}
First observe that the function $\Omega$ initially defined at Line~\ref{line:compute_oracle} does not need to be updated during the iterations of the loop from Lines~\ref{line:loop_begin} to~\ref{line:loop_end}. Indeed, let $[\xi, \xi']$ and $[\xi', \xi'']$ be two consecutive ordinary segments of direction $e^J$ and $e^{J'}$ respectively. By Proposition~\ref{prop:lambda_sets_update}, we have the inclusion $\enteringHyp(\xi',J') \subset \enteringHyp(\xi,J)$ and the equality $\argmax(\epr^+_i \tdot \xi') = \argmax(\epr^+_i \tdot \xi)$ for all $i \in \enteringHyp(\xi',J')$ . It follows that if $\Omega$ is a function determining whether $j \in \argmax(\epr^+_i \tdot \xi)$ for all $i \in \enteringHyp(\xi,J)$, it can be used as well to determine whether $j \in \argmax(\epr^+_i \tdot \xi')$ for all $i \in \enteringHyp(\xi',J')$.

Then, the correctness of the algorithm straightforwardly follows from Proposition~\ref{prop:tangent_graph_update}~\eqref{item:basic_point_to_interior} (for the computation of $\digraph_{\oiv{\xi^1, \xi^2}}$ at Line~\ref{line:compute_digraph}), Proposition~\ref{prop:tangent_graph_interior_edge} (for the computation of $J$ at Line~\ref{line:compute_J}) and Proposition~\ref{prop:follow_ordinary_segment}. 

The complexity of the operations from Lines~\ref{line:prelim_begin} to~\ref{line:prelim_end} can easily be verified to be in $O(m n)$. 
Let $q \leq n$ be the number of iterations of the loop from Lines~\ref{line:loop_begin} and~\ref{line:loop_end}, and let $e^{J_1} , e^{J_2}, \dots, e^{J_{q}}$ be the directions of the ordinary segments followed during the successive calls to Algorithm~\ref{alg:follow_ordinary_segment}. By Proposition~\ref{prop:follow_ordinary_segment}, the total complexity of the loop is
\[O(n q + m  |J_2 \setminus J_1| + m|J_3 \setminus J_2 | + \dots + m|J_{q} \setminus J_{q-1}|)\,, \]
which can be bounded by $O(n(m + n))$. 
Finally, the space complexity is obviously bounded by $O(n m)$. 
\end{proof}

\section{Reduced costs}\label{sec:reduced_costs}

\noindent
In this section, we introduce the concept of tropical reduced costs, which are merely the signed valuations of the reduced costs over Puiseux series. Then, pivots improving the objective function and optimality over Puiseux series can be determined only by the signs of the tropical reduced costs.
 We show that, under some genericity assumptions, the tropical reduced costs can be computed using only the tropical entries $A$ and $c$ in time $O(n(m + n))$. This complexity is similar to classical simplex algorithm, as this operation corresponds to the update of the inverse of the basic matrix $A_\basis$. 

\subsection{Symmetrized tropical semiring}\label{sec:symmetrized}
Until now our coordinate domain was the set of signed tropical numbers $\strop$.  
As noted in~Section~\ref{subsect-signed}, this has the drawback of not being a
semiring since, in general, $a\tplus(\tminus a)$ is not defined.  This can be remedied by extending $\strop$ to the
\emph{symmetrized tropical semiring} from \cite{akian1990linear}, which we
denote here as $\symtrop$.  
We shall see in particular that the computation of {\em tropical reduced costs} reduces to the resolution of the analogue of a Cramer system
over the symmetrized tropical semiring. 

As a set $\symtrop$ is the union of $\strop$ and a third copy of $\trop$, denoted $\trop_{\bullet}$. The members of the
latter, written as $a^{\bullet}$ for $a \in \trop$, are the \emph{balanced tropical numbers}.
The numbers $a$, $\tminus a$ and $a^{\bullet}$ are pairwise distinct unless $a = \zero$. 
Sign and modulus are extended to $\symtrop$ by setting $\sign(a^{\bullet}) = 0$ and $| a^{\bullet} | = a$.

The addition of two elements $x, y \in \symtrop$, denoted by $x \splus y$, is defined to be $\max(|x|, |y|)$ if the maximum is attained only by elements of
positive sign, $\tminus \max(|x|, |y|)$ if it is attained only by elements of negative sign, and $\max(|x|,|y|)^{\bullet}$ otherwise.  For instance, $( \tminus
1) \tplus 1 \tplus (\tminus 3) = 1^{\bullet} \tplus (\tminus 3) = \tminus 3$.  The multiplication $x \stimes y$ of two elements $x, y \in \symtrop$ yields the
element with modulus $|x| + |y|$ and with sign $\sign(x) \sign(y)$. For example, $(\tminus 1) \stimes 2= \tminus 3$ and $(\tminus 1) \stimes (\tminus 2)= 3$ but
$1^{\bullet} \stimes (\tminus 2) = 3^{\bullet}$.  An element $x\in \strop$ not equal to $\zero$ has a multiplicative inverse $x^{-1}$ which is the element of
modulus $-|x|$ and with the same sign as $x$.  The addition $A \splus B$ and multiplication $A \sdot B$ of two matrices $A = (a_{ij})$ and $B=(b_{ij})$ are
the matrices with entries $a_{ij} \splus b_{ij}$ and $\ssum_k a_{ik} \stimes b_{kj}$, respectively.

The set $\symtrop$ also comes with the \emph{reflection map} $x \mapsto \tminus x$ which sends a balanced number to
itself, a positive number $a$ to $\tminus a$ and a negative number $\tminus a$ to $a$. We will write $x \tminus y$ for
$x \splus (\tminus y)$.
Two numbers $x,y \in \symtrop$ satisfy the \emph{balance relation} $x \bal y$ when $x \tminus y$ is a balanced
number. Note that
\begin{align}
x \bal y \implies  x=y
\qquad\text{for all } x, y \in \strop
\enspace .\label{e-bal-to-eq}
\end{align}
The balance relation is extended entry-wise to vectors in $\symtrop^n$.
In the semiring $\symtrop$, the relation $\bal$ plays the role of the equality relation; in particular the next result
shows that a version of Cramer's Theorem is valid in the tropical setting, up to replacing equalities by balances. 

 The tropical determinant of the square matrix $M = (m_{ij}) \in \symtrop^{n \times n}$ is given by
\begin{equation}
  \tdet( M) = \ssum_{\sigma \in \Sym(n)} \tsign(\sigma) \stimes m_{1 \sigma(1)} \stimes \cdots \stimes m_{n \sigma(n)}
  \label{eq:tdet}
\end{equation}
Also observe  that a square matrix of $\strop^{n \times n}$ is tropically sign singular if and only if its tropical determinant is a balanced number. 

\begin{theorem}[Signed tropical Cramer Theorem~\cite{akian1990linear}]
\label{thm:tropical_cramer}
Let $M \in \symtrop^{n \times n}$ and $d \in \symtrop^n$. Every solution $y \in \strop^n$
  of the system of balances
\begin{equation}
  M \sdot y \bal d
\label{eq:balance_system}
\end{equation}
satisfies
\begin{equation}
\tdet ( M)  \stimes y_j \bal \tdet( M_{j \leftarrow d} ),
\qquad \text{for all } j \in [n], 
  \label{eq:cramer}
\end{equation}
where $M_{j \leftarrow d}$ is the matrix obtained by replacing the $j$th column of $M$ by $d$.

Conversely, if the tropical determinants $\tdet(M)$ and $ \tdet(M_{ j \leftarrow d})$ for ${j\in [n]}$ are not balanced elements, then the vector with
entries $y_j = \tdet( M) ^{-1} \stimes \tdet( M_{j \leftarrow d}) $ is the unique solution of (\ref{eq:balance_system}) in $\strop^n$.
\end{theorem}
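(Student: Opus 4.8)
The plan is to transcribe the classical proof of Cramer's rule into $\symtrop$, keeping careful track of which manipulations are exact semiring identities and which ones merely produce a balance; the whole difficulty is that $\bal$ is not transitive, so I will arrange the argument so that exactly one balance is introduced and is then propagated through operations known to preserve a single balance. The tools I would isolate first, all available from~\cite{akian1990linear}: $\bal$ is symmetric, it is preserved under $\stimes$ by an arbitrary element and under the addition $\splus$ (hence $a\bal a'$ and $b\bal b'$ imply $a\splus b\bal a'\splus b'$), adding or discarding a balanced summand on one side of a balance does not destroy it, multiplication by a signed nonzero (hence invertible) element preserves \emph{and reflects} $\bal$, and, by~\eqref{e-bal-to-eq}, $\bal$ restricts to equality on $\strop$. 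On the determinant side I would record two facts, again from~\cite{akian1990linear}: the Laplace expansion of $\tdet$ along any row or column is an \emph{exact} identity in $\symtrop$ (it is merely a regrouping of the sum over $\Sym(n)$, with the classical sign bookkeeping carrying over verbatim), and a matrix with two equal rows or two equal columns has balanced tropical determinant (compose each permutation $\sigma$ with the transposition swapping the two repeated indices: the modulus of the corresponding term is unchanged while $\tsign$ is reversed, so the maximal term of $\tdet$ occurs with both signs).

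For the first assertion I would fix $j$, write $C_{ij}$ for the $(i,j)$-cofactor of $M$ (the $\pm\tdet$ of the minor obtained by deleting row $i$ and column $j$), and note that $C_{ij}$ does not involve the $j$th column of $M$, hence is also the $(i,j)$-cofactor of $M_{j\leftarrow d}$. Expanding along column $j$ gives the exact identity $\tdet(M_{j\leftarrow d})=\ssum_i C_{ij}\stimes d_i$. Now I would substitute the single balance $d_i\bal\ssum_k m_{ik}\stimes y_k$ coming from $M\sdot y\bal d$, and regroup by distributivity:
\[
 \tdet(M_{j\leftarrow d})\ \bal\ \ssum_i C_{ij}\stimes\Bigl(\ssum_k m_{ik}\stimes y_k\Bigr)\ =\ \ssum_k y_k\stimes\Bigl(\ssum_i C_{ij}\stimes m_{ik}\Bigr).
\]
For $k=j$ the inner sum is the column-$j$ Laplace expansion of $M$, equal to $\tdet(M)$; for $k\neq j$ it is the column-$j$ Laplace expansion of the matrix obtained from $M$ by overwriting its $j$th column with a copy of its $k$th column, so it equals a determinant with two equal columns and is balanced. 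Hence $\tdet(M_{j\leftarrow d})\bal\tdet(M)\stimes y_j\splus(\text{balanced})$, and discarding the balanced summand together with symmetry of $\bal$ yields~\eqref{eq:cramer}.

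For the converse I would assume $\tdet(M)$ and all the $\tdet(M_{j\leftarrow d})$ are non-balanced; then $\tdet(M)$ is signed and nonzero, so $y_j:=\tdet(M)^{-1}\stimes\tdet(M_{j\leftarrow d})\in\strop$ is well defined. To check $M\sdot y\bal d$ I would fix $i$ and, since multiplication by the invertible element $\tdet(M)$ reflects $\bal$, reduce to proving $\ssum_k m_{ik}\stimes\tdet(M_{k\leftarrow d})\bal\tdet(M)\stimes d_i$. Expanding each $\tdet(M_{k\leftarrow d})$ along its $k$th column and regrouping exactly, the left-hand side becomes $\ssum_l d_l\stimes\bigl(\ssum_k m_{ik}\stimes C_{lk}\bigr)$, where $\ssum_k m_{ik}\stimes C_{lk}$ is the row-$i$ Laplace expansion of $M$ (equal to $\tdet(M)$) when $l=i$, and for $l\neq i$ is the row-$l$ Laplace expansion of $M$ with row $l$ overwritten by a copy of row $i$, hence a determinant with two equal rows, which is balanced; so the left-hand side equals $\tdet(M)\stimes d_i\splus(\text{balanced})\bal\tdet(M)\stimes d_i$. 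For uniqueness, any $y\in\strop^n$ solving $M\sdot y\bal d$ satisfies~\eqref{eq:cramer} by the first assertion, and since $\tdet(M)\stimes y_j$ and $\tdet(M_{j\leftarrow d})$ both lie in $\strop$ with the latter non-balanced,~\eqref{e-bal-to-eq} forces $\tdet(M)\stimes y_j=\tdet(M_{j\leftarrow d})$, i.e.\ $y_j=\tdet(M)^{-1}\stimes\tdet(M_{j\leftarrow d})$.

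The step I expect to be the main obstacle is precisely the bookkeeping forced by the failure of transitivity of $\bal$: I must ensure that at no point are two balances chained, and that the only nonelementary facts invoked --- the row/column Laplace expansions and the ``equal rows or columns gives a balanced determinant'' lemma --- are genuine equalities (resp.\ genuine balances) in $\symtrop$, with the classical sign computations transported without change. Everything else (distributivity, multiplying by an invertible scalar, stripping a balanced summand, and the fact that $\bal$ collapses to $=$ on $\strop$) is routine once these two facts are in place.
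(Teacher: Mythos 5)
Your converse direction is sound: the identity $\ssum_k m_{ik}\stimes\tdet(M_{k\leftarrow d})=\tdet(M)\stimes d_i\splus(\text{balanced})$ only needs the always-valid implication $u\splus b\bal u$ for $b$ balanced, and uniqueness then follows from the first assertion together with~\eqref{e-bal-to-eq}. The genuine gap is in your proof of the first assertion, at the very last step: ``discarding the balanced summand'' is not a legitimate rule. From $X\bal Y\splus B$ with $B$ balanced you cannot conclude $X\bal Y$; for instance $\unit\splus 1^{\bullet}=1^{\bullet}\bal 1$, yet $\unit\not\bal 1$. (The valid direction is only the reverse one, adding a balanced summand.) Concretely, your argument yields $\tdet(M_{j\leftarrow d})\bal\tdet(M)\stimes y_j\splus B$ with $B=\ssum_{k\neq j}(\text{balanced})\stimes y_k$, and $|B|$ may strictly dominate both $|\tdet(M)\stimes y_j|$ and $|\tdet(M_{j\leftarrow d})|$, in which case the displayed balance carries no information about $\tdet(M)\stimes y_j\bal\tdet(M_{j\leftarrow d})$.

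That this is not a repairable slip but a missing idea can be seen from the fact that your derivation of the first assertion never uses the hypothesis $y\in\strop^n$, whereas the statement is false without it. Take $M=\bigl(\begin{smallmatrix}\unit & \tminus\unit\\ \unit & \unit\end{smallmatrix}\bigr)$, $d=(3,\tminus 5)^{\top}$ and $y=(0^{\bullet},5^{\bullet})$: both rows of $M\sdot y$ equal $5^{\bullet}$, which balances $3$ and $\tminus 5$, so $M\sdot y\bal d$; yet $\tdet(M)\stimes y_1=0^{\bullet}$ while $\tdet(M_{1\leftarrow d})=3\ominus 5=\tminus 5$, and $0^{\bullet}\not\bal\tminus 5$. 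So any argument that ignores the signedness of $y$ proves too much. This is exactly where the real content of the theorem lies: the paper does not prove it but quotes it from~\cite{akian1990linear} (see also~\cite{Akian2013}), and the proofs there do not run the classical adjugate computation verbatim; they exploit signedness through elimination of variables based on a ``weak substitution/weak transitivity'' property of $\bal$ (balances may be chained or substituted only through \emph{signed} quantities), or through finer combinatorial arguments. To complete your first assertion you would need to import that machinery rather than the (false) cancellation of a balanced summand; the Laplace expansion and equal-columns lemmas you set up are fine and are indeed part of such a proof.
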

This result was proved in~\cite{akian1990linear}; see also~\cite{AGG08b,Akian2013} for more recent discussions.
A different tropical Cramer theorem (without signs) was proved by Richter-Gebert, Sturmfels and Theobald~\cite{richter2005first}; their proof relies
on the notion of a coherent matching field introduced by Sturmfels and Zelevinsky~\cite{SZ}.

\begin{remark}
  The quintuple $(\strop,\max,+,\ominus 0,\trop_\bullet)$ is an example of a ``fuzzy ring'' in the sense of \cite[Definition 1.1]{Dress86}.  In the
  notation of that reference,
$\strop$
is ``the group of units'' and $\trop_\bullet$ is the set denoted ``$K_0$''.
\end{remark}

\subsection{Computing solutions of tropical Cramer systems} \label{sec:computing_reduced_costs} The Jacobi iterative algorithm
of~\cite{akian1990linear} allows one to compute a signed solution $y$ of the system $M \sdot y \bal d$; see also~\cite{Akian2013} for more
information.  We next present a combinatorial version of this algorithm, for the special case where the entries of $M$ and $d$ are in
$\strop$.

Suppose that $\tdet(M) \neq \zero$, and let $\sigma$ be a maximizing permutation in $\tdet(M)$ (or equivalently, in $\tper(|M|)$). The \emph{Cramer
  digraph} of the system associated with $\sigma$ is the weighted bipartite directed graph over the ``column nodes'' $\{1, \dots, n+1\}$ (the index
$n+1$ represents the affine component) and ``row nodes'' $\{1, \dots, n\}$ defined as follows: every row node $i \in [n]$ has an outgoing arc to the
column node $\sigma(i)$ with weight $m_{i\sigma(i)}^{-1}$, and an incoming arc from every column node $j \neq \sigma(i)$ with weight $\tminus m_{ij}$ when
$j \in [n]$, and weight $d_i$ when $j = n+1$.
\begin{example}\label{exmp:cramer}
  The maximizing permutation for the system of balances~\eqref{eq:cramer_system_example} below is $\sigma(1) =1, \sigma(2) = 3 $ and
  $\sigma(3)=2$. The Cramer digraph is represented in Figure~\ref{fig:cramer_graph_example}.
  \begin{equation}
    \label{eq:cramer_system_example}
    \begin{pmatrix}
      \tminus(-1) & -\infty & -\infty \\
      -1 & \tminus(-2) & 0 \\
      \tminus(-1) & 0 & -\infty
    \end{pmatrix}
    \tdot
    \begin{pmatrix}
      y_1 \\ y_2 \\ y_3
    \end{pmatrix}
    \bal
    \begin{pmatrix}
      -2 \\ 0 \\ -1
    \end{pmatrix}
  \end{equation}

\begin{figure}[t]
  \centering
  \begin{tikzpicture}[ scale=1.0,
 vertex/.style={circle,draw=black, thick,inner xsep=1ex, text width=1ex, align = center},
 hyp/.style={rectangle,draw=black, thick,,inner xsep=1ex, text width=2ex, align=center, minimum size=3.5ex },
 edge/.style={draw=black,thick, >= triangle 45, ->} ]

\node [vertex] (3) at (0,4) {$3$};
\node [vertex] (2) at (2,2) {$2$};
\node [vertex] (1) at (4,4) {$1$};

\node [hyp] (x_1) at (2,4) {$y_1$};
\node [hyp] (x_2) at (0,2) {$y_2$};
\node [hyp] (x_3) at (4,2) {$y_3$};
\node [hyp] (c) at (2,0) {$y_4$};

\draw[edge,  color = red] (1)   -- node[above, color = black] {$ \tminus 1$} (x_1)  ; 
\draw[edge, dashdotted] (x_1) -- node[above] {$-1$} (3);
\draw[edge, dashdotted] (x_1) -- node[right] {$\tminus(-1)$} (2);
\draw[edge,  color = red] (3)   -- node[right, color = black] {$0$} (x_2);
\draw[edge, dashdotted] (x_2) -- node[below] {$-2$} (2);
\draw[edge,  color = red] (2)   -- node[below, color = black] {$0$} (x_3);
\draw[edge] (c)   -- node[right] {$0$} (2);
\draw[edge] (c) to[out = 15, in = -90]  node[right, pos = 0.7] {$-2$} (5,3) to [out = 90, in = -15] (1);
\draw[edge] (c)  to[out=165, in = -90]  node[left, pos = 0.7] {$-1$} (-1,3) to [out = 90, in =-165] (3);

  \end{tikzpicture}
\caption{The Cramer digraph for the system of balances in~\eqref{eq:cramer_system_example}. Column nodes are squares and row nodes are circles. Arcs with weight $-\infty$ are omitted. The maximizing permutation $\sigma$ is given by the red arcs. 
 The coordinate $y_j$ of the signed solution $y$ of \eqref{eq:cramer_system_example} is obtained by the multiplication (in $\symtrop$) of the weight on the longest path from $y_4$ to $y_j$. }
  \label{fig:cramer_graph_example}
\end{figure}
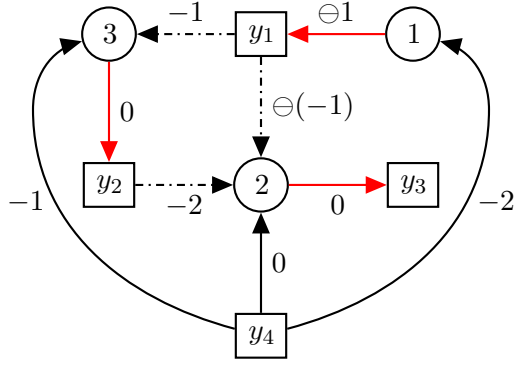
\end{example}

Note that all the coefficients $m_{i\sigma(i)}$ are different from~$\zero$.
In the sequel, it will be convenient to consider
the longest path problem in the weighted
digraph obtained from the Cramer digraph associated with $\sigma$
by forgetting the tropical signs,
i.e., by taking the modulus of each weight. 
Note in particular that there is no directed cycle the weight of which has a positive modulus (otherwise $\sigma$ would not be a maximizing permutation in the tropical determinant of $M$). Consequently, the latter longest path problem is well-defined (longest weights being either finite or $-\infty$, but not $+\infty$).

The \emph{digraph of longest paths} from a node $v$ refers to the subgraph of the Cramer digraph formed by the arcs belonging to a longest path from node $v$. This digraph is acyclic
and each of its nodes is reachable from the node $v$ (possibly with a path of length $\zero$). 
As a result, it always contains a directed tree rooted at $v$. Such a directed tree can be described by a map which sends every node (except the root) to its parent node. Note that by construction of the Cramer digraph, a column node $j$ has only one possible parent node $\sigma^{-1}(j)$. Consequently, we will describe a directed tree of longest paths by a map $\gamma$ that sends every row node to its parent column node. 

\begin{proposition}\label{prop-jacobi-combinatorial}
Let $M \in \strop^{n \times n}$ such that $\tdet(M) \neq \zero$ and $d \in \strop^n$. Let $\sigma$ be a maximizing permutation in the tropical determinant of $M$. 
In the Cramer digraph of the system $M \sdot y \bal d$ associated with $\sigma$, consider the digraph of longest paths from the column node $n+1$. In this digraph of longest paths, choose any directed subtree $\gamma$ rooted at the column node $n+1$. Then, the following recursive relations
\begin{equation}
y_{\sigma(i)} = 
\begin{cases}
 d_i  \stimes m_{i\sigma(i)}^{-1}& \text{when}\ \gamma(i)=n+1 \ ,\\
\tminus  m_{i \gamma(i)} \stimes m_{i\sigma(i)}^{-1}  \stimes y_{\gamma(i)} & \text{otherwise} 
\end{cases}
\label{eq:cramer_recursive_relations}
\end{equation}
provide a solution in $\strop^n$ of the system $M \sdot y \bal d$. 
\label{prop:computing_cramer_solution}
\end{proposition}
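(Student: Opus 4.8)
The plan is to verify by hand that the vector $y$ defined by~\eqref{eq:cramer_recursive_relations} lies in $\strop^n$ and satisfies $M \sdot y \bal d$; one cannot simply invoke Theorem~\ref{thm:tropical_cramer}, whose converse part requires the Cramer determinants $\tdet(M_{j\leftarrow d})$ to be unbalanced, a hypothesis not made here. First I would check that $y$ is well-defined. Since $\tdet(M)\neq\zero$ the permutation $\sigma$ exists and every $|m_{i\sigma(i)}|$ is finite, so the inverses $m_{i\sigma(i)}^{-1}$ make sense; since every row node is reachable from $n+1$ in the Cramer digraph (through the direct arc of weight $d_i$), the arborescence $\gamma$ is defined on all of $[n]$, and processing its nodes in a topological order from the root shows that each $y_{\sigma(i)}$ is obtained from the already available value $y_{\gamma(i)}$ (or from $d_i$ when $\gamma(i)=n+1$). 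As~\eqref{eq:cramer_recursive_relations} uses only $\stimes$, inversion and $\tminus$ — never $\splus$ — and $\sigma$ is a bijection of $[n]$, this produces a well-defined $y\in\strop^n$.

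Next I would introduce the function $p$ sending each node $v$ of the Cramer digraph to the modulus of a longest path from $n+1$ to $v$, where each arc weight is replaced by its modulus (so $p(n+1)=0$ via the empty path). This is well-defined with values in $\trop$ because there is no directed cycle of positive modulus: such a cycle would yield a permutation $\pi$ with $\sum_i|m_{i\pi(i)}|>\sum_i|m_{i\sigma(i)}|$, contradicting that $\sigma$ maximizes $\tper(|M|)$. Two structural facts follow from the shape of the Cramer digraph. For a column node $s=\sigma(i)$, its only in-neighbour is the row node $i$, whence $p(s)=p(i)-|m_{is}|$; and the in-neighbours of a row node $i$ are exactly the column nodes $j\neq\sigma(i)$, whence $p(i)\ge p(j)+|m_{ij}|$ for every such $j\in[n]$ and $p(i)\ge|d_i|$. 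A short induction along $\gamma$, using that each tree arc $u\to v$ lies on a longest path and hence satisfies $p(v)=p(u)+|w_{uv}|$, then gives $|y_j|=p(j)$ for every column node $j$, with the signs propagating as prescribed by~\eqref{eq:cramer_recursive_relations}.

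The heart of the argument is then a per-row computation. Fix $i$ and put $s=\sigma(i)$. Combining $p(i)=p(s)+|m_{is}|$ with the inequalities above yields $|m_{is}\stimes y_s|=|m_{is}|+p(s)\ge|m_{ij}\stimes y_j|$ for all $j\in[n]$ and $|m_{is}\stimes y_s|\ge|d_i|$, so the term $m_{is}\stimes y_s$ has maximal modulus in $\bigl(\ssum_{j\in[n]}m_{ij}\stimes y_j\bigr)\splus(\tminus d_i)$. The arborescence then produces a second maximal term of opposite sign: if $\gamma(i)=g\in[n]$, the tree arc $g\to i$ has modulus $|m_{ig}|$, so $|m_{ig}|+p(g)=p(i)=|m_{is}|+p(s)$ and $m_{ig}\stimes y_g$ also attains the maximal modulus, while~\eqref{eq:cramer_recursive_relations} gives $m_{is}\stimes y_s=\tminus m_{ig}\stimes y_g$; if $\gamma(i)=n+1$, the tree arc $n+1\to i$ has modulus $|d_i|$, so $|d_i|=p(i)=|m_{is}|+p(s)$ and $\tminus d_i$ attains the maximal modulus, while~\eqref{eq:cramer_recursive_relations} gives $m_{is}\stimes y_s=d_i$. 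In either case two signed terms of opposite sign tie for the maximal modulus, so by the definition of $\splus$ in $\symtrop$ the sum is a balanced number (when that maximal modulus is $\zero$, all terms vanish and the sum equals the balanced element $\zero$). Hence $(M\sdot y)_i\tminus d_i$ is balanced for each $i$, that is $M\sdot y\bal d$.

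The main obstacle is this middle step: establishing that $m_{is}\stimes y_s$ is maximal and singling out the companion maximal term of opposite sign via $\gamma$, while keeping track of the degenerate cases where a modulus equals $\zero=-\infty$. Once the dictionary ``tree arc $\leftrightarrow$ longest-path equality'' and the two Bellman-type inequalities for $p$ are in place, the remaining work is routine bookkeeping.
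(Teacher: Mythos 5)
Your argument is correct and is essentially the paper's own proof: both interpret $|y_j|$ as the weight of a longest path from column node $n+1$, use the Bellman-type optimality inequalities to show $m_{i\sigma(i)}\stimes y_{\sigma(i)}$ has maximal modulus in row $i$, and use tightness along the tree $\gamma$ to exhibit a second maximal-modulus term ($m_{i\gamma(i)}\stimes y_{\gamma(i)}$ or $d_i$) of opposite sign, so each row balances. Your extra care about well-definedness, absence of positive-modulus cycles, and the $\zero$ degenerate cases only makes explicit what the paper leaves implicit.
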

\begin{proof}
Since the column node $n+1$ reaches all column nodes in the directed tree defined by $\gamma$, Equation~\eqref{eq:cramer_recursive_relations} defines a point $y$ in $\strop^n$. The modulus $|y_j|$ is the weight of a longest path from the column node $n+1$ to the column node $j$. By the optimality conditions of the longest paths problem, for any $i \in [n]$, we have:
\begin{align*}
|m_{i \sigma(i)}|+ |y_{\sigma(i)}| & \geq |d_i| \ , \\
|m_{i \sigma(i)}| + |y_{\sigma(i)}| & \geq  |m_{ij}| + |y_j| \quad\text{for all} \  j \in [n] \ .
\end{align*}
Furthermore, we have $|m_{i \sigma(i)}| + |y_{\sigma(i)}| = |m_{i\gamma(i)}| + |y_{\gamma(i)}|$ when $\gamma(i) \neq n+1$ and $|m_{i \sigma(i)}| + |y_{\sigma(i)}| = |d_i|$ otherwise. 

Thus, if $\gamma(i) \neq n+1$, the terms $m_{i\sigma(i)} \stimes y_{\sigma(i)}$ and $m_{i \gamma(i)} \stimes y_{\gamma(i)}$ have maximal modulus among the terms of the sum $m_{i1} \stimes y_1 \splus \cdots \tplus m_{in} \stimes y_n \tminus d_i$. Moreover, \eqref{eq:cramer_recursive_relations} ensures that $m_{i\sigma(i)} \stimes y_{\sigma(i)} \tplus m_{i \gamma(i)} \stimes y_{\gamma(i)} $ is balanced. Similarly, if $\gamma(i) = n+1$, then $m_{i\sigma(i)} \stimes y_{\sigma(i)} \tminus d_i$ is balanced and the terms $m_{i\sigma(i)} \stimes y_{\sigma(i)}$ and $d_i$ have maximal modulus in $m_{i1} \stimes y_1 \splus \cdots \tplus m_{in} \stimes y_n \tminus d_i$. In both cases, we conclude that $\Mrow_i \sdot y \bal d_i$.
\end{proof}

A digraph of longest paths for Example~\ref{exmp:cramer} is shown in Figure~\ref{fig:cramer_graph_example}. From the
relations~\eqref{eq:cramer_recursive_relations}, we obtain the signed solution $y = (\tminus(-1), -1, 0)$.

\subsubsection{Complexity analysis.}\label{subsec-complex-reducedcost}
We now discuss the complexity of the method provided by Proposition~\ref{prop:computing_cramer_solution}. First, a maximizing permutation $\sigma$ can
be found in time $O(n^3)$ by the Hungarian method; see \cite[\S17.2]{Schrijver03:CO_A}. Second, the digraph of longest paths, as well as a directed
tree of longest paths, can be determined in time $O(n^3)$ using the Bellman--Ford algorithm; see \cite[\S8.3]{Schrijver03:CO_A}. Last, the solution
$x$ can be computed in time $O(n)$.

However, we claim that the complexity of the second step can be decreased to $O(n^2)$. The idea is to consider a variant of the Cramer digraph with non-positive weights, and then to apply Dijkstra's algorithm to solve the longest paths problem. We exploit the fact that the Hungarian method is a primal-dual algorithm, which returns, along with a maximizing permutation $\sigma$, an optimal solution $(u,v)$ to the dual assignment problem:
\begin{equation}\label{eq:dual_assignment_problem}
\begin{aligned}
\min_{u,v \in \R^n} \quad &\sum_{i=1}^n u_i + \sum_{j=1}^n v_j \\
	& |m_{ij}| - u_i - v_j \leq 0 \quad \text{for all} \ i,j \in [n] \ .
\end{aligned}
\end{equation}
By complementary slackness, we have:
\begin{equation}
|m_{i\sigma(i)}| = u_i + v_{\sigma(i)} \quad \text{for all} \  i \in [n] \ .
\label{eq:assignment_pb_complementary_slackness}
\end{equation}
Since $\tdet(M) \neq \zero$, the assignment problem has a solution with a finite cost. Therefore, the dual problem~\eqref{eq:dual_assignment_problem} is feasible and bounded. Thus it admits a solution $u,v \in \R^n$. 

We make the diagonal change of variables $y_j = v_j \ttimes z_j$, for all $j\in [n]$, where the $z_j$ are the new variables. We consider the matrix
$M'=(m'_{ij})$ obtained from $M$ by the following diagonal scaling, $m'_{ij}= \mu^{-1} \ttimes u_i^{-1}\ttimes m_{ij}\ttimes v_j^{-1}$, where $\mu$ is
a real number to be fixed soon, together with the vector $d'$ with entries $d'_i = \mu^{-1}\ttimes u_i^{-1}\ttimes d_i$ for all $i\in [n]$.  Then,
 dividing (tropically) every row $i$ of the system $M\sdot y \bal d$ by $\mu$ and by $u_i$, and performing the above change of variables, we arrive at the
equivalent system $M'\sdot z \bal d'$.  By choosing $\mu:=\max(\max_{i}(|d_i|-u_i),0)$, we get that $|d'_i|\leq 0$, and $|m'_{ij}|\leq 0$ for all
$i,j\in [n]$.  The longest path problem to be solved in order to apply the construction of Proposition~\ref{prop-jacobi-combinatorial} to
$M'\sdot z \bal d'$ now involves a digraph with non-positive weights.

It follows that the latter problem can be solved by applying Dijkstra's algorithm to the digraph with modified costs. Moreover, the directed tree provided by Dijkstra's algorithm is also valid in the original problem. 

\subsection{Tropical reduced costs as a solution of a tropical Cramer system}
 In the rest of this section, we 
suppose that Assumption~\ref{ass:finite_coordinates} holds, so we only consider basic points $x^\basis$ with finite entries. We also
 make the following assumption, which is a tropical version of dual non-degeneracy.
\begin{assumption}\label{ass:dual_general_position} 
  The matrix $(A^T \ c^T)$ is tropically sign generic.
\end{assumption}

We can now define the vector of \emph{tropical reduced costs} of a set $\basis\subset[m]$ of cardinality
$n$ such that $\tdet(A_I) \neq \zero$ to be the unique solution $y^\basis \in \strop^m$ of the system of $m$ balances
\begin{equation}
\left \{
\begin{aligned}
\transpose{A} \sdot y &\bal \transpose{c}  \\
y_i &\bal \zero \quad \text{ for all } i \in [m] \setminus \basis \ .
\end{aligned}
\right .
\label{eq:trop_reduced_costs}
\end{equation}

\begin{proposition}\label{prop-pivotingimprove}
Let $x^\basis$ be a tropical basic point of $\tropP(A,b)$ for a suitable $\basis \subset [m]$.
 Then there is a unique solution $y^\basis \in \strop^m$ of the system of balances~\eqref{eq:trop_reduced_costs}.

Let $(\A \ \b)$ be any lift of $(A \ b)$. Pivoting from the basic point $\x^\basis$ of the Puiseux polyhedron $\puiseuxP(\A,\b)$  along the edge $\puiseuxE_{\basis \setminus \{k \} }$ (for $k \in \basis $)  improves the objective function if, and only if, the tropical reduced cost $y^\basis_k$ is tropically negative.
The basic point $\x^\basis$ is an optimum of the Puiseux linear program if and only if the tropical reduced costs $y^\basis$ are tropically non-negative.
\end{proposition}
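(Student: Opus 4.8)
The plan is to reduce everything to the classical situation over Puiseux series via the correspondence established earlier, and then translate the classical statements about reduced costs into statements about their signed valuations.

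\medskip

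First I would establish existence and uniqueness of the tropical reduced cost vector $y^\basis$. Fix a lift $(\A \ \b) \in \sval^{-1}(A\ b)$, and pick $\cc \in \sval^{-1}(c)$, e.g.\ $\cc_j = t^{-c_j}$. Since $x^\basis$ is a tropical basic point, Proposition-Definition~\ref{prop:def_basic_points} tells us that $\A_\basis$ is non-singular and $\x^\basis$ is the corresponding classical basic point of $\puiseuxP(\A,\b)$. The classical reduced costs $\y = (\y_k)_{k\in\basis}$ solve $-\transpose{\A_\basis}\y + \cc = 0$, or equivalently the $m$-dimensional system obtained by extending $\y$ with zeros on $[m]\setminus\basis$: $\transpose{\A}\y = \cc$ together with $\y_i = 0$ for $i \notin \basis$. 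Taking signed valuations, and using that over the symmetrized semiring $\sval$ turns linear equalities into balances (this is precisely the content of the signed tropical Cramer Theorem~\ref{thm:tropical_cramer}, since $\y$ is obtained by Cramer's rule with the submatrix $\transpose{\A_\basis}$), I get that $y^\basis := \sval(\y)$ extended by $\zero$ satisfies the system of balances~\eqref{eq:trop_reduced_costs}. Uniqueness of the signed solution follows from the converse part of Theorem~\ref{thm:tropical_cramer}: the matrix governing the system~\eqref{eq:trop_reduced_costs}, after eliminating the trivially-balanced coordinates $i\notin\basis$, is $\transpose{A_\basis}$, whose tropical determinant is non-balanced by Assumption~\ref{ass:dual_general_position} (tropical sign genericity of $(\transpose{A}\ \transpose{c})$), and similarly each numerator determinant $\tdet((\transpose{A_\basis})_{j\leftarrow \transpose{c}})$ is non-balanced; hence the solution in $\strop^m$ is unique and equals $y^\basis_k = \tdet(\transpose{A_\basis})^{-1}\stimes \tdet((\transpose{A_\basis})_{k\leftarrow\transpose c})$.

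\medskip

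Next I would prove the sign characterization of improving pivots. The key point is that the classical reduced cost $\y_k$ and its signed valuation $y^\basis_k$ have the \emph{same sign}. This is immediate once $y^\basis_k = \sval(\y_k)$, which I established above; and by definition of $\sval$, $\y_k$ is a negative (resp.\ positive, resp.\ zero) Puiseux series iff $y^\basis_k$ is tropically negative (resp.\ positive, resp.\ $\zero$). Now recall from the classical theory recalled in the excerpt that moving along the edge $\puiseuxE_{\basis\setminus\{k\}}$ from $\x^\basis$ improves $\cc\x$ if and only if $\y_k < 0$; but the Puiseux linear program $\puiseuxP(\A,\b)$ together with $\cc$ is a genuine lift in the sense of Proposition~\ref{prop:puiseux_solves_tropical_program}, and by the standardness assumptions (Assumptions~\ref{ass:general_position} and~\ref{ass:finite_coordinates}) it is non-degenerate, so ``improves'' is unambiguous. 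Hence this pivot is improving iff $y^\basis_k$ is tropically negative. The phrase ``improves the objective function'' in the statement refers to the Puiseux objective $\cc\x$; I would make this explicit, noting that by Corollary~\ref{coro:val_and_inter_commute_for_generic_matrices} the tropical edge $\tropE_{\basis\setminus\{k\}}$ is precisely $\val(\puiseuxE_{\basis\setminus\{k\}})$, so the pivot can equivalently be viewed tropically.

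\medskip

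Finally, the optimality criterion: $\x^\basis$ is optimal for the Puiseux linear program iff all classical reduced costs $\y_k$, $k\in\basis$, are non-negative — this is the Strong Duality / optimality criterion of the simplex method over an ordered field, valid in particular over $\puiseux$. By the sign correspondence just established, this holds iff every $y^\basis_k$, $k\in\basis$, is tropically non-negative; and since $y^\basis_i = \zero$ for $i\notin\basis$, which is tropically non-negative, this is equivalent to the whole vector $y^\basis$ being tropically non-negative. This closes the proof.

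\medskip

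I expect the main obstacle to be the bookkeeping that identifies the system of balances~\eqref{eq:trop_reduced_costs} with the signed valuation of the classical system $\transpose{\A}\y = \cc$, $\y_i=0$ ($i\notin\basis$) — specifically, verifying that the genericity hypothesis (Assumption~\ref{ass:dual_general_position}) is exactly what makes \emph{all} the relevant tropical determinants ($\tdet(\transpose{A_\basis})$ and the numerators) non-balanced, so that the converse direction of Theorem~\ref{thm:tropical_cramer} applies and gives uniqueness, and that $\sval$ commutes with Cramer's formula here because the leading terms in the determinant expansions do not cancel (which is again guaranteed by sign genericity). Everything else is a direct transfer of classical facts through the valuation dictionary.
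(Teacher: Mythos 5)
Your proposal is correct and follows essentially the same route as the paper: take signed valuations of the Puiseux reduced costs to obtain a signed solution of the balance system, invoke the converse of the signed tropical Cramer theorem with $M=\transpose{A_\basis}$ and $d=\transpose{c}$ (dual sign genericity making the relevant determinants unbalanced) to get uniqueness and the Cramer formula, and then transfer the classical improving/optimality criteria through the resulting sign correspondence. The only nuance is that $\tdet(\transpose{A_\basis})\neq\zero$ comes from $\basis$ defining a basic point (Proposition-Definition~\ref{prop:def_basic_points}), not from Assumption~\ref{ass:dual_general_position} alone, but you in effect already have this since you invoke the non-singularity of $\A_\basis$ at the start.
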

\begin{proof}
  First, the signed valuation of the Puiseux reduced costs $\y^\basis$ yields a signed solution of~\eqref{eq:trop_reduced_costs}. Let us show that
  this solution is unique.  We apply Theorem \ref{thm:tropical_cramer} with $M = \transpose{A_\basis}$ and $d = \transpose{c}$.  Since $\basis$ yields
  a basic point, the matrix $\A_\basis$ is not singular, thus $\tdet(M) \neq \zero$.  By Assumption \ref{ass:dual_general_position}, the tropical
  determinants of the matrices $M$ and $M_{j \leftarrow d}$ for ${j \in [n]} $ belong to $\strop$. Then by \eqref{eq:cramer}, the vector $y^\basis$ with
  entries $y^\basis_j =  \tdet (M )^{-1} \stimes \tdet( M_{j \leftarrow d}) $ is the unique solution of~\eqref{eq:trop_reduced_costs}.

We have shown that the tropical signs of the tropical reduced costs are exactly the signs of the Puiseux reduced costs, which proves the second part of proposition.
\end{proof}

\begin{example}
  In Example~\ref{example:running_example}, the tropical reduced costs associated with $\basis = \{\mathref{eq:running_example_eq2}, \mathref{eq:running_example_eq3},
  \mathref{eq:running_example_eq4}\}$ are given by $ y^I = (\tminus (-1), -1,0)$, which is the signed solution of~\eqref{eq:cramer_system_example}. It follows that the only edge with negative reduced cost is  $\tropE_{\{\text{\ref{eq:running_example_eq3}}, \text{\ref{eq:running_example_eq4}}\}}$.

\end{example}

\begin{algorithm}[t]
\SetKwComment{Comment}{}{}
\DontPrintSemicolon
\SetArgSty{text}
\footnotesize
\KwIn{A basic point $x^\basis$ of $\tropP(A, b)$, the associated set $\basis$, the objective function $c$}
\KwOut{The tropical reduced costs $y^\basis$ }
$\graph_{x^I} \leftarrow$ tangent graph at $x^\basis$ \Comment*[r]{$O(mn)$}
$\sigma \leftarrow$ maximizing permutation in $\tdet(A_\basis)$ obtained by a traversal of $\graph_{x^I}$ \Comment*[r]{$O(n)$} \label{line:max_permutation}
$u \leftarrow-x^\basis$ \Comment*[r]{$O(n)$}  \label{line:reduced_costs3}
$v \leftarrow A^+_\basis \tdot x^\basis$ \Comment*[r]{$O(mn)$} 
$\mu \leftarrow \max(\max_{j \in [n]}(c_j - u_j),0 )$ \Comment*[r]{$O(n)$}
$M' \leftarrow $ tropically signed matrix with entries $m'_{ij} = \mu^{-1} \stimes u^{-1}_i \stimes a_{ji} \stimes v^{-1}_j$ \Comment*[r]{$O(n^2)$} 
$d' \leftarrow$ tropically signed vector with entries $d_i = \mu^{-1} \stimes u^{-1}_i \stimes c_i$ \Comment*[r]{$O(n)$} 
$\vec{C} \leftarrow$ Cramer digraph of the system $M' \sdot y \bal d'$ for the permutation $\sigma$ \Comment*[r]{$O(n^2)$}
apply Dijkstra's algorithm to $\vec{C}$ from column node $n+1$  \Comment*[r]{$O(n^2 + n \log(n))$} 
$\gamma \leftarrow$ the tree of longest paths returned by Dijkstra's algorithm \; $z \leftarrow$ signed vector obtained by applying
\eqref{eq:cramer_recursive_relations} to the tree $\gamma$ \Comment*[r]{$O(n)$} 
\KwRet{$y^\basis$ the signed vector with entries $y^\basis_j=v_j
  \stimes z_j$}\Comment*[r]{$O(n)$} \label{line:reduced_costsend}
\caption{Computing tropical reduced costs}\label{alg:reduced_costs}
\end{algorithm}

\begin{theorem}\label{thm:reduced_costs}
  Algorithm~\ref{alg:reduced_costs} computes the tropical reduced costs. Its time complexity is bounded by $O(n(m + n))$.
\end{theorem}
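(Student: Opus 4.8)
The plan is to establish correctness and the running-time bound separately, the former by recognising Algorithm~\ref{alg:reduced_costs} as an implementation of the combinatorial Cramer solver of Proposition~\ref{prop:computing_cramer_solution}, accelerated by the diagonal scaling of Section~\ref{subsec-complex-reducedcost}. By Proposition~\ref{prop-pivotingimprove}, $y^\basis$ is the unique solution in $\strop^m$ of the system of balances~\eqref{eq:trop_reduced_costs}; the balances $y_i \bal \zero$ fix its non-basic entries, so what remains to be solved is the $n{\times}n$ tropical Cramer system $\transpose{A_\basis} \sdot y \bal \transpose{c}$, and Assumption~\ref{ass:dual_general_position} together with the non-singularity of $\A_\basis$ (which holds since $x^\basis$ is a basic point) guarantees $\tdet(\transpose{A_\basis}) \neq \zero$ and that the minors appearing in Theorem~\ref{thm:tropical_cramer} are unbalanced, hence that the signed solution exists and is unique.

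Next I would check the lines of the algorithm one by one against the construction of Section~\ref{subsec-complex-reducedcost} with $M = \transpose{A_\basis}$ and $d = \transpose{c}$. First, the permutation $\sigma$ read off by the traversal of the tangent graph is maximizing in $\tper(|A_\basis|)$: at a basic point $\graph_{x^\basis}$ is, by Proposition~\ref{prop:tangent_graph_interior_edge}~(C\ref{item:shape1}) and Assumption~\ref{ass:general_position}, a spanning tree whose hyperplane nodes are exactly $\basis$, so orienting its edges towards the coordinate node $n+1$ assigns to each $i \in \basis$ a distinct coordinate node $\sigma(i) \in [n]$, and Lemma~\ref{lemma:matching_in_graph_is_max_permutation} shows this matching realises $\tper(|A_\basis|)$ (and, under genericity, $\tdet(A_\basis)$). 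Second, one verifies that the pair $(u, v)$ computed from $x^\basis$ in the algorithm is a feasible --- in fact optimal --- solution of the dual assignment problem~\eqref{eq:dual_assignment_problem} attached to $M$, that its entries are finite hence invertible, and that complementary slackness holds along $\sigma$. Granting this, the rescaled data $m'_{ij}$ and $d'_i$ all have non-positive modulus, $\sigma$ stays maximizing for $M'$, the diagonal substitution $y_j = v_j \sdot z_j$ turns $\transpose{A_\basis} \sdot y \bal \transpose{c}$ into the equivalent system $M' \sdot z \bal d'$, Dijkstra applied from column node $n+1$ to the Cramer digraph of the latter (now with non-positive weights) returns a valid tree $\gamma$ of longest paths, and the recursion~\eqref{eq:cramer_recursive_relations} of Proposition~\ref{prop:computing_cramer_solution} produces a solution $z$; unwinding the substitution gives a solution of~\eqref{eq:trop_reduced_costs}, which by uniqueness is $y^\basis$.

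The step I expect to be the main obstacle is the verification of dual feasibility, invertibility and complementary slackness for $(u, v)$, since this is precisely where the combinatorial description of the basic point must be matched with the assignment-problem data. An edge $(i, j)$ of $\graph_{x^\basis}$ with $i \in \basis$ and $j \in [n]$ records that $j \in \argmax(|\epr_i| \sdot x^\basis)$ (taking $x^\basis$ in homogenized form) and that the two sides of inequality $i$ agree at $x^\basis$ at a value $> \zero$, because $x^\basis$ lies on the s-hyperplane $\tropH_i$ and has finite coordinates by Assumption~\ref{ass:finite_coordinates}; together with Assumptions~\ref{assumption_A} and~\ref{assumption_B} this identifies the entries of $v$ with those common values and makes the inequalities $|m_{ij}| \le u_i + v_j$ follow from membership in $\tropH_i^{\ge}$, while the preceding identities become complementary slackness exactly along $\sigma$. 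Everything else is then either quoted verbatim (Proposition~\ref{prop:computing_cramer_solution}, Theorem~\ref{thm:tropical_cramer}, Lemma~\ref{lemma:matching_in_graph_is_max_permutation}) or is the routine telescoping behind the diagonal change of variables --- and one must also recall (as in Section~\ref{subsec-complex-reducedcost}) that a longest-path tree of the rescaled digraph is still a longest-path tree for the original one.

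For the complexity I would simply add up the costs annotated in Algorithm~\ref{alg:reduced_costs}. The preprocessing --- computing the tangent graph at $x^\basis$ (one $\argmax$ over $n$ entries per row of $A$), extracting $\sigma$ by a traversal of the spanning tree $\graph_{x^\basis}$ (which has $O(n)$ nodes and edges), and forming $u$, $v$, $\mu$, $M'$, $d'$ and the Cramer digraph --- costs $O(mn + n^2)$ in total; Dijkstra on a digraph with $O(n)$ nodes and $O(n^2)$ arcs costs $O(n^2 + n\log n)$; and recovering the tree $\gamma$, evaluating the recursion~\eqref{eq:cramer_recursive_relations} to obtain $z$, and the final rescaling $y^\basis_j = v_j \sdot z_j$ each cost $O(n)$. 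The dominant contributions are the $O(mn)$ of the tangent-graph step and the $O(n^2)$ of the linear-algebra steps, so the overall running time is $O(n(m + n))$, as claimed.
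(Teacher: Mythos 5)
Your proposal is correct and follows essentially the same route as the paper: extract the maximizing permutation $\sigma$ from the spanning-tree structure of $\graph_{x^\basis}$ via Lemma~\ref{lemma:matching_in_graph_is_max_permutation} and Proposition~\ref{prop:tangent_graph_interior_edge}, verify that $u=-x^\basis$ and $v=A^+_\basis\tdot x^\basis$ give an optimal dual assignment solution (feasibility from the half-space inequalities, complementary slackness along the tangent-graph edges), and then invoke the scaled Dijkstra-based Cramer solver of Section~\ref{subsec-complex-reducedcost} together with the annotated line-by-line cost count to get $O(n(m+n))$. The only blemish is a harmless index-labelling slip in your dual-feasibility sentence (whether $u$ is indexed by coordinates and $v$ by hyperplanes or vice versa, i.e.\ whether one works with $A_\basis$ or $\transpose{A_\basis}$), which does not affect the argument.
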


\begin{proof}
  The maximizing permutation $\sigma$ is computed from $\graph_{x^I}$ in Line \ref{line:max_permutation} as follows. We first determine a matching between the coordinate nodes $1, \dots, n$ and the set $I$ of hyperplane nodes using the technique described in the proof of Proposition~\ref{prop:tangent_graph_interior_edge}, Case~\eqref{item:tangent_graph_basic_point}. By Lemma~\ref{lemma:matching_in_graph_is_max_permutation}, this matching provides a maximizing permutation in $\tdet(|A_\basis|)$.
It can be obviously computed by a
  traversal of $\graph_{x^I}$ starting from coordinate node $n+1$. Since $\graph_{x^I}$ contains $2n+1$ nodes and $2n$ edges (see the proof of Proposition~\ref{prop:tangent_graph_interior_edge}), this traversal requires $O(n)$ operations.  The complexity of the other operations of this algorithm are
  straightforward and are given in annotations. We conclude that the overall time complexity is $O(m(n + n))$.

  Let $v = A^+_\basis \tdot x^\basis$. For any hyperplane node $j \in \basis$ and any $i \in [n]$, we have $v_j \geq |a_{ji}| + x^\basis_i$, where $A = (a_{ij})$. Moreover, equality holds for every edge $(j,i)$ in the tangent graph. In particular with the permutation $\sigma$, we have $v_{\sigma(i)} =
  |a_{\sigma(i)i}| + x^\basis_{i} $. By Assumptions \ref{assumption_A} and \ref{ass:finite_coordinates}, we have $v \in \R^n$ and $x^{\basis} \in \R^n$. Thus $u = -x^\basis$ and $v$ form an optimal solution to the dual assignment problem
  \eqref{eq:dual_assignment_problem} for the matrix $M = \transpose{A_\basis}$.  It follows from the discussion in Section
  \ref{sec:computing_reduced_costs} that the operations between Line \ref{line:reduced_costs3} and \ref{line:reduced_costsend} compute the tropical
  reduced costs.
\end{proof} 

We conclude this section by applying Algorithm~\ref{alg:main} to the running example~\ref{example:running_example}.
\begin{example}\label{example:run_with_reduced_costs}
  We start from the tropical basic point $(4,4,2)$ associated with $\basis= \{\mathref{eq:running_example_eq2}, \mathref{eq:running_example_eq3},\mathref{eq:running_example_eq1}\}$. For this set, tropical reduced costs are
  $y_{\text{\ref{eq:running_example_eq2}}}= \tminus(-1)$, $y_{\text{\ref{eq:running_example_eq3}}}= -1$ and
  $y_{\text{\ref{eq:running_example_eq1}}}= \tminus 4$. We choose $\ileaving = \text{\ref{eq:running_example_eq1}}$ and pivot along the
  tropical edge $\tropE_{\{\text{\ref{eq:running_example_eq2}},\text{\ref{eq:running_example_eq3}}\}}$.

 We arrive at the basic point $(1,0,0)$, associated with $I= \{\mathref{eq:running_example_eq2},\mathref{eq:running_example_eq3},\mathref{eq:running_example_eq4}\}$.
  The reduced costs are $y_{\text{\ref{eq:running_example_eq2}}}= \tminus(-1)$, $y_{\text{\ref{eq:running_example_eq3}}}= -1$ and $y_{\text{\ref{eq:running_example_eq4}}}= 0$.
  The only tropically negative reduced cost is $y_{\text{\ref{eq:running_example_eq2}}}$, thus we pivot along $\tropE_{\{\text{\ref{eq:running_example_eq3}},\text{\ref{eq:running_example_eq4}}\}}$.

  The new basic point is $(0,0,0)$, corresponding to the set
  $\{\text{\ref{eq:running_example_eq3}},\text{\ref{eq:running_example_eq4}},\text{\ref{eq:running_example_eq5}}\}$. The reduced costs  are tropically positive: $y_{\text{\ref{eq:running_example_eq3}}}= -1$, $y_{\text{\ref{eq:running_example_eq4}}}= 0$  and $y_{\text{\ref{eq:running_example_eq5}}}= -2$. Thus $(0,0,0)$ is optimal.
\end{example}

\section{Proof of the main theorem and generalization to Hahn series}\label{sec:generalization}

\noindent
We now have all the tools needed to prove Theorem~\ref{th-main} under the assumptions of primal non-degeneracy (Assumption
\ref{ass:general_position}), finiteness (Assumption \ref{ass:finite_coordinates}) and dual non-degeneracy
(Assumption~\ref{ass:dual_general_position}).  If a tropical linear program satisfies all three conditions we call it \emph{standard}.
 
\begin{proof}[Proof of Theorem~\ref{th-main}.]
  The time complexity of one iteration of the tropical simplex algorithm follows from the complexity of the tropical pivoting operation (Theorem \ref{th:pivot}) and of the computation of tropical reduced costs (Theorem \ref{thm:reduced_costs}). 

Propositions~\ref{prop:def_basic_points} and~\ref{prop:def_edges} ensure that the tropical pivoting operation traces the image by the valuation map of the pivoting operation over Puiseux series. 
By Proposition \ref{prop-pivotingimprove}, choosing the pivot according to the signs of the tropical reduced costs amounts to choosing a pivot according the signs of the Puiseux reduced costs.  

We claim that under  our assumptions, the edges of the Puiseux polyhedron have a positive length (\ie\ as a set, they are not reduced to a point). By contradiction,  suppose that an edge $\puiseuxE_K$ between the basic points $\x^{K \cup\{k \}}$ and $\x^{K \cup\{k' \}}$ have zero length, where $k \neq k'$ and $k, k' \not\in K$. Then $\x^{K \cup\{k \}}=\x^{K \cup\{k' \}}$. Thus the tropical basic point $x = \val (\x^{K \cup\{k \}})=\val(\x^{K \cup\{k' \}})$ is contained in the $n+1$ tropical s-hyperplanes $\tropH(\pr_i, b_i)$ for $i \in K \cup \{k, k'\}$. Since $x$ has finite entries by Assumption  \ref{ass:finite_coordinates}, the $n+1$ elements of  $K \cup \{k, k'\}$ appears as hyperplane nodes in the tangent graph at $x$. This contradicts Proposition~\ref{prop:tangent_graph_interior_edge} and proves the claim.

The  basic points $\x^{K \cup\{k \}}$ and $\x^{K \cup\{k' \}}$ of an edge $\puiseuxE_K$ are related by $\x^{K \cup\{k' \}} = \x^{K \cup\{k \}} + \pmu \d^k$, where $\pmu> 0$ is the length of $\puiseuxE_K$ and $\d^k$ its direction defined in~\eqref{eq:edge_direction_vector}. When pivoting from $\x^{K \cup\{k \}}$ to $\x^{K \cup\{k' \}}$,  the objective value increases by $\pmu (\cc\d^k)$.  Furthermore, $\y_k = \cc \d^k$ is the reduced cost of the pivot along $\puiseuxE_K$ from the basic point $\x^{K \cup\{k\}}$. As a consequence, as long as a pivot with a negative reduced cost is chosen, each iteration improves the objective function over Puiseux series.
By Assumption~\ref{ass:finite_coordinates}, the Puiseux polyhedron is bounded, thus the value of the Puiseux linear program is finite. Therefore, Algorithm~\ref{alg:main} does terminate.

Finally, the output of Algorithm~\ref{alg:main} is a tropical basic point with tropically non-negative reduced costs. By Proposition~\ref{prop-pivotingimprove}, the corresponding Puiseux basic point is an optimum of the Puiseux linear program. Then by Proposition~\ref{prop:puiseux_solves_tropical_program}, the tropical basic point is an optimum of the tropical linear program.
\end{proof}

We described tropical linear programming in the max-plus version of the tropical semiring. However, the proofs of our results also hold in any
semiring $(\trop_G, \max, +)$ which arises from an abelian totally ordered group $(G, +, \geq)$ \ie\ the semiring is defined on the set $\trop_G = G
\cup \{\zero\}$, the order on $G$ is extended to $\trop_G$ by setting $\zero \leq x$ for any $x \in G$, and the maximum is defined with respect to the
order on $G$. In this setting, the notion of ``tropical general position'' still makes sense. Puiseux series are then replaced by the ordered field
$\R[\![t^G]\!]$ of (formal) Hahn series with real coefficients and with value group $(G, +)$; recall that Hahn series are required to have a well ordered support.
The analysis of Section~\ref{sec:tropical_basic_points} relies only on
the fact that the coefficients of the series are real numbers (Theorem~\ref{thm:val_and_inter_commute_for_generic_matrices}). In
Section~\ref{sec:pivot} the description of a tropical edge as the concatenation of ordinary segments still holds. Finally in
Section~\ref{sec:reduced_costs}, the Tropical Cramer Theorem (Theorem~\ref{thm:tropical_cramer}) is still valid in this generalized setting.

\begin{theorem}
The assertions of Theorem~\ref{th-main} remain valid if the tropical
semiring is replaced by $\trop_G$ and if the field of real Puiseux
series is replaced by the field of real Hahn series $\R[\![t^G]\!]$, 
the execution time being now evaluated in a model in which
every arithmetic operation in the group $G$ takes a time $O(1)$.\qed
\end{theorem}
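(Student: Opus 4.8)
The plan is to prove the theorem \emph{by inspection}: one revisits the proof of Theorem~\ref{th-main} and checks, step by step, that every ingredient uses only (a) the structure of $(G,+,\ge)$ as a totally ordered abelian group, equivalently of $\trop_G=G\cup\{\zero\}$ as the associated idempotent semiring; (b) real linear algebra, but entering exclusively through finitely many auxiliary subproblems over $\R$; and (c) the axioms of an ordered field, which $\R[\![t^G]\!]$ satisfies. No property of $\R$ beyond being an ordered field --- in particular neither completeness, nor archimedianity, nor the specific exponent structure of Puiseux series --- is ever used, and the implicit cost assumption that arithmetic in $\trop$ is $O(1)$ is replaced by the stated assumption that arithmetic in $G$ is $O(1)$.

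First I would set up the dictionary. Every nonzero Hahn series $\x\in\R[\![t^G]\!]$ has well-ordered support, hence a least exponent $\alpha_{\min}\in G$; defining $\val(\x):=-\alpha_{\min}$ and $\val(0):=\zero$ yields a valuation onto $\trop_G$ whose restriction to non-negative series is a surjective, order-preserving semiring morphism with $\val(\x+\y)=\val(\x)\tplus\val(\y)$ whenever the (real) leading coefficients do not cancel --- exactly the properties of $\val:\puiseux\to\trop$ used in Section~\ref{sec:puiseux}. Signed valuations, lifts, and the symmetrized semiring $\symtrop$ built over $\trop_G$ are then defined verbatim, and tropical (sign) genericity still makes sense, being a condition on permanents, which are maxima of sums in $G$.

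Next I would replay the three core sections. For Section~\ref{sec:tropical_basic_points}: the proof of Theorem~\ref{thm:val_and_inter_commute_for_generic_matrices} constructs a lift $\x=(v_j t^{-x_j})_j$ with exponents $-x_j\in G$ and real coefficients $v_j$, where $v$ solves the \emph{finite real} feasibility system $Fv\ge\unit$, $v\ge\unit$, $F$ being the real matrix of leading coefficients; its solvability follows from Farkas over $\R$ together with the purely combinatorial Lemmas~\ref{lemma:matching_in_graph_is_max_permutation} and~\ref{lemma:directed_cycle_sign_singularity} on the tangent digraph (which lives entirely in $\trop_G$). Likewise the lift of Proposition~\ref{prop:puiseux_solves_tropical_program} uses a real $\alpha>n+1$ and the inequality $(n+1)t^{-c}<\alpha t^{-c}$, which reduces to $n+1<\alpha$ in $\R$. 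Hence Corollary~\ref{coro:val_and_inter_commute_for_generic_matrices} and Proposition--Definitions~\ref{prop:def_basic_points} and~\ref{prop:def_edges} carry over; the few topological steps (closedness of $\tropC$) remain valid in the order topology on $\trop_G^{n+1}$, or can be bypassed via the lift. For Section~\ref{sec:pivot}: Proposition~\ref{prop:tropical_edge} and Propositions~\ref{prop:tangent_graph_interior_edge}, \ref{prop:maximal_step}, \ref{lemma:tangent_graph_is_constant}, \ref{prop:tangent_graph_update} and~\ref{prop:lambda_sets_update} are combinatorial and order-theoretic, the quantities $\lambda^\pm_i$ now lying in $G\cup\{+\infty\}$; Algorithms~\ref{alg:follow_ordinary_segment} and~\ref{alg:pivot} are unchanged and perform a polynomial number of group operations and comparisons, which gives the $O(n(m+n))$ pivot bound of Theorem~\ref{th:pivot} in the stated model. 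For Section~\ref{sec:reduced_costs}: the Signed Tropical Cramer Theorem~\ref{thm:tropical_cramer} of~\cite{akian1990linear} holds over any ordered group; the solver of Proposition~\ref{prop:computing_cramer_solution} needs a maximizing permutation (the Hungarian method performs only additions, subtractions and comparisons, hence works over $G$) and longest paths with weights in $G$ (well-defined because a maximizing $\sigma$ excludes positive-modulus directed cycles), and the Dijkstra-based speed-up to $O(n^2)$ uses an optimal dual assignment solution $(u,v)$ --- here $u=-x^\basis$ and $v=A^+_\basis\tdot x^\basis$ --- which again exists over $G$. So Algorithm~\ref{alg:reduced_costs} runs in $O(n(m+n))$ (Theorem~\ref{thm:reduced_costs}) and Proposition~\ref{prop-pivotingimprove} holds, since it only invokes Cramer and the order-preservation of $\val$.

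Finally I would assemble exactly as in the proof of Theorem~\ref{th-main}: by the generalized Corollary~\ref{coro:val_and_inter_commute_for_generic_matrices} the tropical pivots and reduced-cost signs coincide with those over any lift to $\R[\![t^G]\!]$; under Assumptions~\ref{ass:general_position}--\ref{ass:dual_general_position} that lift is a bounded linear program over an ordered field whose edges have positive length (the finiteness argument through Proposition~\ref{prop:tangent_graph_interior_edge}), so every iteration strictly improves the objective and, there being finitely many bases, the algorithm terminates at an optimal basic point, whose valuation is optimal for the $\trop_G$-program by Proposition~\ref{prop:puiseux_solves_tropical_program}; the per-iteration cost $O(n(m+n))$ is the section-by-section tally above. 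The main obstacle, and the only genuine one, is to confirm that the handful of places where $\R$ appeared to enter \emph{analytically} rather than through a finite real subproblem can be dispatched purely algebraically --- namely the closedness of $\tropC$ in the order topology, and the existence and solvability over $G$ of the assignment and longest-path problems underlying the Cramer solver and its acceleration. Once these are seen to rest only on the ordered-group axioms, the theorem follows by inspection.
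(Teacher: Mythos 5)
Your proposal is correct and follows essentially the same route as the paper, which proves this theorem precisely by the inspection argument you describe: checking that Section~\ref{sec:tropical_basic_points} uses only the fact that the coefficients of the series are real (via the finite real Farkas subproblem in Theorem~\ref{thm:val_and_inter_commute_for_generic_matrices}), that the pivoting analysis of Section~\ref{sec:pivot} is purely combinatorial and order-theoretic, and that the Tropical Cramer Theorem and its combinatorial solver in Section~\ref{sec:reduced_costs} remain valid with weights in $G$, with arithmetic in $G$ counted at unit cost. Your more detailed verification of the assignment/longest-path and closedness points is a faithful elaboration of the same argument rather than a different approach.
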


We end this paper by mentioning some simple extensions of the present results.
Our version of the tropical simplex algorithm can readily be adapted to the maximization of a tropical linear form over a tropical polyhedron, instead
of the minimization. Indeed, the former problem can be handled by taking as a cost vector a vector of negative tropical numbers, and lifting it to a
cost vector of real Puiseux series with negative leading coefficients.

More generally, one may consider a tropical cost vector with both negative and positive coordinates.  The present tropical simplex algorithm can still
be defined in this setting, however, its interpretation in terms of tropical optimization problem turns out to be less satisfactory.  Indeed, the cost
function $c\ttimes x$ may be a balanced tropical number for some feasible vectors $x$, whereas there is no total order on the symmetrized tropical
semiring with a natural interpretation in terms of lift to real Puiseux series. Hence, the tropical minimization problem appears to be somehow ill
defined.  However, for an input in general position, the cost function evaluated at any tropical basic point will always be unbalanced.  Then, the
tropical simplex algorithm, with some straightforward modifications, can be used to return a tropical basic point whose cost is minimal among all
tropical basic points, but which may be incomparable with respect to some non-basic feasible points.

We did not study the overall complexity of the tropical simplex algorithm. But we expect, as in the classical case, an exponential behavior on some
particular examples (such as the Klee-Minty cubes \cite{MR0332165}).

The results of this paper should allow the construction of more general tropical pivoting algorithms. In particular, the criss-cross method
\cite{fukuda1997criss}, which pivots between unfeasible basic points of the arrangement of s-hyperplanes, should also tropicalize. Pivots can be
handled with Algorithm \ref{alg:pivot}. The selection of pivots would involve the tropical signs of the basic point and of the reduced costs.

Finally, we briefly comment on the complexity of deciding if the tropical linear program \ref{eq:trop_linear_prog_pb} given by $A \in \strop^{m \times
  n}$, $b \in \strop^m$ and $c \in \trop^{1 \times n}$ satisfies our standard conditions.  It is always safe to assume that the Assumptions
\ref{assumption_A} and \ref{assumption_B} are satisfied, for it takes at most $O(mn)$ time to simplify the input if this is not the case
\cite[Lemma~1]{GaubertKatz2011minimal}.  Verifying the finiteness condition \ref{ass:finite_coordinates} requires to solve $n$ tropical linear
feasibility problems to check for a non-trivial intersection with the boundary of the tropical projective space, which amounts to solving mean-payoff
games \cite{AGG}.  So it is unclear whether or not this can be done in polynomial time.  Checking for non-degeneracity takes exponential time in the
classical case \cite{Erickson96}; and hence this should also hold for the tropical analog, Assumption~\ref{ass:general_position}.

\bibliographystyle{alpha}

\end{document}